\documentclass[printtrim,hidelinks,onefignum,onetabnum]{siamart251216}

\usepackage{amsfonts}
\usepackage{amssymb}
\usepackage{mathtools}
\usepackage{algorithmic}
\usepackage{bm}
\usepackage{endnotes}
\usepackage{booktabs}
\usepackage{array}
\usepackage{microtype}
\renewcommand{\thetable}{\arabic{table}}
\renewcommand{\thealgorithm}{\arabic{algorithm}}

\theoremstyle{plain}
\theoremheaderfont{\normalfont\itshape}
\theorembodyfont{\normalfont}
\theoremseparator{.}
\newtheorem{assumption}{Assumption}[section]
\theoremheaderfont{\normalfont\itshape}
\theorembodyfont{\normalfont}
\newtheorem{remark}[theorem]{Remark}
\newtheorem{example}[theorem]{Example}

\DeclareMathOperator*{\argmin}{arg\,min}
\DeclareMathOperator{\dist}{dist}
\DeclareMathOperator{\dom}{dom}
\DeclareMathOperator{\prox}{prox}
\DeclareMathOperator{\grad}{grad}
\DeclareMathOperator{\Retr}{Retr}
\DeclareMathOperator{\crit}{crit}
\DeclareMathOperator{\rank}{rank}
\DeclareMathOperator{\Id}{Id}

\newcommand{\R}{\mathbb{R}}
\newcommand{\N}{\mathbb{N}}
\newcommand{\M}{\mathcal{M}}
\newcommand{\TM}{T\mathcal{M}}
\newcommand{\calF}{\mathcal{F}}
\newcommand{\E}{\mathbb{E}}
\newcommand{\eps}{\epsilon}
\newcommand{\ind}{\iota}
\newcommand{\norm}[1]{\lVert #1\rVert}
\newcommand{\abs}[1]{\lvert #1\rvert}
\newcommand{\ip}[2]{\langle #1,#2\rangle}
\newcommand{\barB}{\overline{B}}
\newcommand{\trans}{\top}
\newcommand{\wh}[1]{\hat{#1}}
\newcommand{\wt}[1]{\widetilde{#1}}

\newcommand{\TheTitle}{An Inexact Riemannian Proximal Momentum Variance-Reduced Method: Complexity Bounds and KL Sequential Convergence}
\newcommand{\TheAuthors}{Na Zhang}
\headers{INEXACT RIEMANNIAN PROXIMAL VARIANCE REDUCTION}{N. ZHANG}
\title{{An Inexact Riemannian Proximal Momentum\\
		Variance-Reduced Method: Complexity Bounds and KL\\
		Sequential Convergence}\thanks{Funding: This work was supported in part by the National Natural Science Foundation of China under grant 12271181 and by the Guangzhou Basic Research Program under grant 2025A04J5240.}}
\author{Na Zhang\thanks{Department of Applied Mathematics, College of
		Mathematics and Informatics, South China Agricultural University, Guangzhou
		510642, China (\email{nazhang2014@scau.edu.cn}).}}

\ifpdf
\hypersetup{
  pdftitle={\TheTitle},
  pdfauthor={\TheAuthors},
  pdfsubject={Riemannian stochastic composite optimization},
  colorlinks=true,
  linkcolor=black,
  citecolor=black,
  urlcolor=black
}
\fi

\begin{document}
\maketitle

\begin{abstract}
We develop a unified analysis of inexact stochastic Riemannian proximal
optimization for finite-sum nonsmooth composite problems over compact embedded
submanifolds. The framework accommodates variance-reduced gradient estimators,
projected momentum, and inexact tangent-space proximal solves under a single
conditional error-dissipation condition, verified for projection-based SVRG,
SARAH/SPIDER, SAGA, and SAG. A computable Fenchel-dual residual criterion,
with tolerance prescribed before sampling and inner iterations, enables explicit
control of the inner work. We establish conditional expected descent,
subsequential stationarity, and an \(O(\epsilon^{-2})\) outer complexity.
With SARAH/SPIDER and accumulative regularization, iRPMVR attains
\(O(n+\sqrt n\,\epsilon^{-2})\) component-gradient and \(O(\epsilon^{-3})\)
proximal-operator complexities. We further develop an abstract KL principle
for conditional expected descent with memory and summable tails using only the
ordinary pointwise KL property. A counterexample shows that a power-type expected-KL implication used in
earlier stochastic analyses can fail. The principle
yields almost-sure finite length, whole-sequence convergence, and deterministic
KL rates.
\end{abstract}

\begin{keywords}
Riemannian stochastic optimization, nonsmooth composite optimization,
variance reduction, inexact proximal methods, KL property
\end{keywords}
\section{Introduction}
Many problems in statistics, machine learning, signal processing, and scientific
computing involve optimizing a data-fitting objective over a structured manifold
together with a nonsmooth regularizer. Typical examples include sparse principal
component analysis and sparse canonical correlation analysis
\cite{Chen2020Sparse}, sparse subspace clustering \cite{ElhamifarVidal2013}, and
learning models with normalization or orthogonality structures
\cite{Chen2020ManPG,Gao2018,WenYin2013}. In large-scale applications, the smooth
data-fitting term is often formed by averaging a large number of sample losses.
This leads to the finite-sum composite model
\begin{equation}\label{eq:model}
 \min_{x\in\M}\ \Phi(x):=F(x)+R(x),
 \qquad F(x):=\frac1n\sum_{j=1}^n f_j(x),
\end{equation}
where $\M\subseteq\R^d$ is an embedded Riemannian submanifold, each
$f_j:\R^d\to\R$ is smooth on a neighborhood of $\M$, and
$R:\R^d\to\R$ is a proper closed convex function.

This model presents two computational challenges central to our work. First,
when $n$ is large, a full-gradient evaluation requires all $n$ component
gradients and can be prohibitively expensive. Second, the combination of a
nonsmooth regularizer and nonlinear manifold geometry complicates the
construction of practical first-order updates. When $R\equiv0$, the first
challenge has motivated extensive work on Riemannian stochastic-gradient
\cite{Bonnabel2013}, SVRG \cite{ZhangReddiSra2016,Sato2019}, recursive and
SPIDER \cite{Kasai2018,ZhangZhangSra2018}, SAGA \cite{Babanezhad2019},
recursive-momentum \cite{HanGao2021}, and infeasible stochastic
variance-reduced methods for orthogonality constraints \cite{Ablin2024}. By
comparison, stochastic methods with nonasymptotic guarantees remain limited when
$R\not\equiv0$. For the expectation model $F(x)=\E_\xi[f(x,\xi)]$,
\cite{Peng2023} applied stochastic Riemannian gradient steps to a
Moreau-smoothed objective and obtained an $O(\eps^{-5})$ complexity for
generalized stationarity; \cite{Deng2025ALM} developed the double-loop
StoManIAL framework, which uses a Riemannian recursive-momentum method to solve
smooth augmented-Lagrangian subproblems and achieves an
$\widetilde O(\eps^{-3.5})$ stochastic-oracle complexity for a broader
composite model; \cite{Jin2025} proposed the single-loop MARS-ADMM method, which
combines manifold ADMM splitting with a recursive-momentum variance-reduced
estimator and attains $\widetilde O(\eps^{-3})$ iteration and
stochastic-oracle complexity for expected KKT stationarity; and
\cite{Deng2025Smoothing} developed a single-loop stochastic smoothing method
with recursive momentum and obtained an $O(\eps^{-3})$ stochastic-oracle
complexity. Here $\widetilde O(\cdot)$ suppresses logarithmic factors. Since a
finite sum is an expectation under the uniform distribution over a finite
dataset, these guarantees also apply to the corresponding finite-sum
specializations. Along the direct proximal-gradient route, the ManPG paradigm
\cite{Chen2020ManPG,Chen2024Survey} was extended to nonsmooth stochastic
optimization over the Stiefel manifold through R-ProxSGD and R-ProxSPB
\cite{Wang2022}. In the finite-sum setting, R-ProxSPB combines tangent-space
proximal steps with a SpiderBoost estimator and requires
$O(n+\sqrt n\,\eps^{-2})$ incremental first-order oracle (IFO) calls and
$O(\eps^{-2})$ retractions.

The existing approaches therefore exhibit a clear computational tradeoff.
The methods in~\cite{Deng2025ALM,Deng2025Smoothing,Jin2025,Peng2023} avoid the tangent-space proximal subproblems arising in the ManPG paradigm, but their retraction-complexity orders are worse than the \(O(\epsilon^{-2})\) order attained by R-ProxSPB.
This distinction can be computationally significant because a retraction may require, for example, a QR or polar factorization.
On the other hand, R-ProxSPB is restricted to the Stiefel manifold, employs a specific SpiderBoost estimator, and assumes exact tangent-space proximal solves, leaving the computational cost of solving these subproblems outside its complexity analysis.
These observations lead to three related analytical questions.
First, can the convergence and complexity analysis be formulated through a common condition on the gradient-surrogate errors, rather than being tied to a particular variance-reduction mechanism?
Second, if the tangent proximal subproblems are solved only approximately, how should the inexactness be controlled so that the \(O(\epsilon^{-2})\) outer complexity is retained while the total inner computational work can also be quantified?
Third, beyond complexity, can one establish almost-sure whole-sequence convergence under the ordinary pointwise KL property when the available descent relation holds only in conditional expectation?
Addressing these questions requires an analysis that separates the stochastic estimator, the inexact proximal computation, and the KL argument, while allowing their effects to be combined within a common framework.

Motivated by these questions, we develop an analytical framework for inexact stochastic Riemannian proximal optimization over compact embedded submanifolds. The framework is built around a conditional error-dissipation condition for the gradient-surrogate errors and a predictable inexactness rule for the tangent-space proximal subproblems, thereby separating the main convergence and complexity arguments from the particular variance-reduction mechanism and inner solver. Within this framework, we introduce an inexact Riemannian proximal momentum variance-reduced method, termed iRPMVR, which accommodates projected momentum and several representative variance-reduced gradient estimators. Our main contributions are summarized as follows.

First, we introduce a conditional error-dissipation condition that provides a common interface between stochastic gradient estimation and the outer convergence analysis. Under this condition, the descent, stationarity, and complexity arguments are independent of the particular variance-reduction mechanism. We verify the condition for projection-based variants of SVRG, SARAH/SPIDER, SAGA, and SAG, which involve different estimator recursions and memory structures. The condition is also weaker than the multi-part estimator assumption in~\cite{Driggs2021}: its mean-square bound and geometric memory recursion imply our condition after a suitable augmentation of the memory process, whereas the separate first-moment bound and the additional convergence requirement on the gradient estimator are not needed. Combined with the inexactness control described below, the framework yields subsequential stationarity, \(O(\epsilon^{-2})\) outer-iteration and retraction complexity, and corresponding IFO bounds for all four estimators.

Second, we develop a prescribed Fenchel-dual residual criterion for inexact
tangent-space proximal solves that retains the $O(\eps^{-2})$ outer-iteration
and retraction complexity and, for each chosen inner solver, enables an a priori
overall oracle-complexity bound for attaining expected squared criticality at
most $\eps^2$. Related inexact criteria have recently been developed for
deterministic composite proximal-linear and nonsmooth difference-of-convex
models \cite{He2025,Zheng2025,Jiang2025}. Unlike these criteria, our tolerance
is prescribed before sampling the current mini-batch and starting the inner
solve, as the maximum of a history-dependent term and a deterministic floor.
This design is crucial in the stochastic setting. In particular, the relative
criteria in \cite{He2025,Zheng2025} tie the tolerance to the norm of the exact
or current approximate tangent step, which is random, while the outer analysis
controls stationarity only in expectation and provides no pathwise lower bound
on the tolerance. Their deterministic arguments therefore do not directly
bound the total inner work required to meet the expected stationarity criterion;
the deterministic floor in our rule supplies precisely this missing bound. Our
computable residual also controls the tangent-constraint violation, the
projection error, and the deviation from the exact tangent proximal step,
thereby separating the outer analysis from the choice of inner solver. Using
the fast iterative shrinkage-thresholding algorithm (FISTA), Nesterov's
fast-gradient method (NFG), and accumulative regularization (AR) as dual
solvers, we derive corresponding overall complexity bounds in terms of
evaluations of the proximal operator of $R$. The proximal-oracle complexity of
the AR implementation matches the best known order among existing inexact
ManPG-type methods.

Third, we establish an abstract KL principle for conditional expected descent
relations with multiple memory terms and summable tail perturbations. Even in
the deterministic setting, this principle goes beyond classical KL analyses
based on standard sufficient decrease and relative error, since it accommodates
positive delayed terms and error tails while assuming only the ordinary KL
property of the function appearing in the original descent relation. It
requires neither the quasi-additivity condition on the desingularizing function
used in \cite{LiMilzarekQiu2023}, nor a prescribed KL exponent as in
\cite{Qiu2025}, nor an additional KL assumption on an algorithm-dependent
Lyapunov function as in \cite{Ochs2014,PockSabach2016}. The stochastic extension
is more delicate because descent holds in conditional expectation, whereas the
KL inequality is pointwise. We also revisit an expected-KL implication used in stochastic KL analyses.
Specifically, Lemma~4.5 of~\cite{Driggs2021} claims that the ordinary KL property,
together with a common KL exponent, yields an expected-KL inequality relating
the expected objective gap to the expected subdifferential distance through a
single iteration-independent desingularizing function. Its proof, however, is
incorrect: the finite-sum KL calculus invoked there yields, at each iteration
\(k\), only an iteration-dependent desingularizing function
\(\varphi_k(s)=a_k s^{1-\theta}\), while the proof implicitly requires the
coefficients \(\{a_k:k\in\mathbb N\}\) to be uniformly bounded, a property that
does not follow from the stated assumptions. More importantly, this is not
merely a technical gap in the proof. We construct an analytic semialgebraic
counterexample (Example~1) satisfying the standard tail-free conditional
expected descent and relative-error conditions and converging with finite
length, for which no iteration-independent power-type expected-KL inequality
holds for any exponent. We avoid this implication by applying the uniformized ordinary KL inequality
pointwise at the current random iterate before taking conditional expectations,
and then using an augmented supermartingale argument to absorb the memory and
tail terms. Applied to full-step iRPMVR, this principle yields almost-sure
finite length and whole-sequence convergence, provided that the natural
shifted-objective values converge almost surely to a deterministic constant. It
requires the KL property only for this natural shifted objective, rather than
for an auxiliary objective containing additional quadratic terms as in
\cite{He2025,Li2024Structured}. Its deterministic specialization further
provides explicit rates in terms of the KL exponent and the decay of the inner
residuals.
\begin{table}[t]
\caption{Comparison with representative nonsmooth Riemannian methods with
explicit operation complexities for achieving an $\eps$-level first-order
stationarity criterion.}\label{tab:comparison}
\centering
{\scriptsize
\begin{tabular}{@{}lccccc@{}}
\toprule
Algorithm & Stoch. & IFO/SFO & \#Retr & $\#\prox_R$ & KL conv.\\
\midrule
RALM \cite{Deng2025ALM,Xu2025RALM} & No & $O(n\eps^{-3})$ & $O(\eps^{-3})$ & $O(\eps^{-3})$ & No\\
RADMM \cite{Li2025ADMM} & No & $O(n\eps^{-4})$ & $O(\eps^{-4})$ & $O(\eps^{-4})$ & No\\
RSG (Det.)\ \cite{BeckRosset2023,Peng2023} & No & $O(n\eps^{-3})$ & $O(\eps^{-3})$ & $O(\eps^{-3})$ & No\\
RADA \cite{Xu2026RADA} & No & $O(n\eps^{-3})$ & $O(\eps^{-3})$ & $O(\eps^{-3})$ & No\\
OADMM \cite{Yuan2025} & No & $O(n\eps^{-3})$ & $O(\eps^{-3})$ & $O(\eps^{-3})$ & Yes\\
\addlinespace
RSG (Sto.)\ \cite{Peng2023} & Yes & $O(\eps^{-5})$ & $O(\eps^{-5})$ & $O(\eps^{-5})$ & No\\
StoManIAL \cite{Deng2025ALM} & Yes & $\widetilde O(\eps^{-3.5})$ & $\widetilde O(\eps^{-3.5})$ & $\widetilde O(\eps^{-3.5})$ & No\\
MARS-ADMM \cite{Jin2025} & Yes & $\widetilde O(\eps^{-3})$ & $\widetilde O(\eps^{-3})$ & $\widetilde O(\eps^{-3})$ & No\\
Smoothing \cite{Deng2025Smoothing} & Yes & $O(\eps^{-3})$ & $O(\eps^{-3})$ & $O(\eps^{-3})$ & No\\
\addlinespace
iRPDC-BB \cite{Jiang2025} & No & $O(n\eps^{-2})$ & $O(\eps^{-2})$ & $O(\eps^{-4})$ & No\\
iRPDC-NFG \cite{Jiang2025} & No & $O(n\eps^{-2})$ & $O(\eps^{-2})$ & $\widetilde O(\eps^{-3})$ & No\\
iRPDC-AR \cite{Jiang2025} & No & $O(n\eps^{-2})$ & $O(\eps^{-2})$ & $O(\eps^{-3})$ & No\\
IVManPL \cite{He2025} & No & $O(n\eps^{-2})$ & $O(\eps^{-2})$ & $O(\eps^{-3})$ & Yes\\
IManPL \cite{Zheng2025} & No & $O(n\eps^{-2})$ & $O(\eps^{-2})$ & $O(\eps^{-3})$ & No\\
iRPMVR-FISTA (this work) & Yes & $O(n+\sqrt n\eps^{-2})$ & $O(\eps^{-2})$ & $O(\eps^{-4})$ & Yes\\
iRPMVR-NFG (this work)& Yes & $O(n+\sqrt n\eps^{-2})$ & $O(\eps^{-2})$ & $\widetilde O(\eps^{-3})$ & Yes\\
iRPMVR-AR (this work)& Yes & $O(n+\sqrt n\eps^{-2})$ & $O(\eps^{-2})$ & $O(\eps^{-3})$ & Yes\\
\bottomrule
\end{tabular}
}
\begin{minipage}{0.96\textwidth}\scriptsize
``Stoch.'' indicates whether stochastic gradients are used. ``IFO/SFO'' reports
incremental first-order oracle (IFO) calls for finite-sum methods and stochastic
first-order oracle (SFO) calls for expectation-model methods; one full-gradient
evaluation counts as $n$ IFO calls. The symbols \#Retr and $\#\prox_R$ denote
retractions and evaluations of the proximal operator of $R$, respectively, and
``KL conv.'' denotes KL-based whole-sequence convergence. The notation
$\widetilde O(\cdot)$ suppresses logarithmic factors. The IFO entries for
iRPMVR use SARAH/SPIDER.
\end{minipage}
\end{table}

Table~\ref{tab:comparison} provides a benchmark comparison, since the
listed methods address different models and stationarity criteria and are
therefore not directly comparable in every respect. The table reports
separately the costs of stochastic or incremental gradient evaluations,
manifold retractions, and proximal evaluations, which represent the main
computational components of the methods under comparison. With SARAH/SPIDER
and the accumulative regularization (AR) dual solver, iRPMVR requires
\(O(n+\sqrt{n}\epsilon^{-2})\) component-gradient evaluations,
\(O(\epsilon^{-2})\) retractions, and \(O(\epsilon^{-3})\)
proximal-operator evaluations. When \(n\leq \epsilon^{-2}\), its IFO
complexity matches the best known order among stochastic nonsmooth Riemannian
methods. Its retraction complexity and proximal-evaluation complexity match
the best known deterministic orders for every \(n\), with the latter also
matching the best known order among existing inexact ManPG-type methods. In
addition, iRPMVR provides KL-based almost-sure whole-sequence convergence.
Thus, the comparison highlights that the proposed analysis combines
competitive stochastic-gradient complexity with retraction and inner-solve
complexities matching the best known deterministic orders, while retaining
a KL-based sequential convergence guarantee.

The remainder of the paper is organized as follows. Section~2 introduces the notation, basic Riemannian geometry, and the KL property. Section~3 presents iRPMVR and establishes its basic descent properties. Section~4 develops the KL-based sequential convergence theory. Section~5 derives the outer-iteration, IFO, and overall proximal-oracle complexity bounds. Section~6 verifies the abstract estimator condition for projection-based SVRG, SARAH/SPIDER, SAGA, and SAG. Finally, Section~7 concludes the paper.

\section{Notation and preliminaries}
We begin with the notation used throughout the paper. Let $\R^d$ be the ambient
Euclidean space endowed with the inner product $\ip{\cdot}{\cdot}$ and the
induced norm $\norm{\cdot}$. We use $\N:=\{0,1,2,\ldots\}$. For a linear
operator $A$, $A^*$ denotes its adjoint; when $A$ is represented by a matrix,
$A^*=A^\trans$. For $x\in\R^d$ and $\delta>0$, let
$B(x,\delta):=\{y\in\R^d:\norm{y-x}<\delta\}$ and $\barB(x,\delta):=\{y\in\R^d:\norm{y-x}\leq\delta\}.$\\
For a nonempty set $S\subseteq\R^d$, define
$\dist(x,S):=\inf_{y\in S}\norm{x-y}$. If $S$ is nonempty, closed, and convex,
$P_S$ denotes the Euclidean projection onto $S$. The indicator function of $S$
is denoted by $\ind_S$, namely $\ind_S(x)=0$ if $x\in S$, and
$\ind_S(x)=+\infty$ otherwise.

For a proper extended-real-valued function
$\phi:\R^d\to(-\infty,+\infty]$, its domain is
$\dom\phi:=\{x\in\R^d:\phi(x)<+\infty\}$, and $\partial\phi$ denotes its
limiting subdifferential; see \cite[Definition~8.3]{RockafellarWets1998} and
\cite[Section~4.2]{CuiPang2021}. When $\phi$ is proper, closed, and convex,
$\partial\phi$ coincides with the usual convex subdifferential; see, for
example, \cite{Bauschke2017}. The subdifferential domain of $\phi$ is denoted by
$\dom\partial\phi:=\{x\in\dom\phi:\partial\phi(x)\neq\emptyset\}$. For a
proper closed convex function $\phi$, its Fenchel conjugate is defined by
$\phi^*(y):=\sup_{x\in\R^d}\{\ip{x}{y}-\phi(x)\}.$
For $\lambda>0$, the proximal operator of $\phi$ is defined by \cite[Definition~12.23]{Bauschke2017} 
\[\prox_{\lambda\phi}(x):= \argmin_{y\in\R^d}\left\{\phi(y)+\frac{1}{2\lambda}\norm{y-x}^2\right\}.\]

\subsection{Riemannian submanifolds and standing assumptions}
We recall the notation used for embedded submanifolds; see, for example,
\cite{Absil2008,Boumal2023}. A subset $\M\subseteq\R^d$ is called a smooth
embedded submanifold of dimension $d-p$ if, for every $x\in\M$, there exist a
neighborhood $U\subseteq\R^d$ of $x$ and a smooth mapping
$\psi:U\to\R^p$ such that
$\M\cap U=\{y\in U:\psi(y)=0\}$ and $\rank D\psi(x)=p;$
see \cite[Definition~3.10]{Boumal2023}. In this case, $\psi$ is called a local
defining function. The tangent space to $\M$ at $x\in\M$, denoted by $T_x\M$,
satisfies $T_x\M=\ker D\psi(x)$, and the normal space to $\M$ at $x\in\M$ is
$N_x\M=(T_x\M)^\perp$; see \cite[Theorem~3.15]{Boumal2023}. Endowed with the
Riemannian metric induced by the ambient Euclidean inner product, $\M$ is called
a Riemannian submanifold of $\R^d$; see
\cite[Proposition~3.54 and Definition~3.55]{Boumal2023}. Thus, for a smooth
function $h$ defined on a neighborhood of $\M$, its Riemannian gradient
satisfies
$\grad h(x)=P_{T_x\M}\nabla h(x),$
where $\nabla h(x)$ denotes the ambient Euclidean gradient of $h$ at $x$.

Throughout the paper, we work under the following standing assumptions.
\begin{assumption}[Geometry and regularity]\label{ass:geometry}
The following conditions hold.
\begin{enumerate}
\item[(i)] $\M\subseteq\R^d$ is a compact embedded Riemannian submanifold of
dimension $d-p$.
\item[(ii)] Each $f_j$ is continuously differentiable on $\R^d$. There exists
an open neighborhood $U$ of $\M$ such that the ambient gradient of each $f_j$
is Lipschitz continuous on $U$ with a common constant $\ell_f$. Moreover, there
exists a constant $M_f>0$, independent of $n$, such that
$\norm{\nabla f_j(x)}\leq M_f$ for every $x\in\M$ and $j=1,\ldots,n$.
Consequently, $\nabla F$ is Lipschitz continuous with constant $\ell_f$ on
$U$, and $\norm{\grad F(x)}\leq M_f$ for every $x\in\M$.
\item[(iii)] The convex function $R:\R^d\to\R$ is Lipschitz continuous on
$\R^d$ with Lipschitz constant $\ell_R$.
\end{enumerate}
\end{assumption}
The tangent bundle is
$\TM:=\{(x,\eta)\in\R^d\times\R^d:x\in\M,\ \eta\in T_x\M\},$
which is an embedded submanifold of $\R^d\times\R^d$ of dimension
$2\dim\M$; see \cite[Theorem~3.43]{Boumal2023}. A retraction is a smooth
mapping $\Retr:\TM\to\M$ such that $\Retr_x(0)=x$ and
$D\Retr_x(0)=\Id_{T_x\M}$; see \cite[Definition~3.47]{Boumal2023}.

The convergence results will be stated in terms of the following first-order
stationarity notion.

\begin{definition}[Critical point]\label{def:critical}
A point $x\in\M$ is called a critical point of problem~\eqref{eq:model} if and
only if
$0\in\nabla F(x)+\partial R(x)+N_x\M.$
\end{definition}

\subsection{Uniform geometric estimates}
We next collect several uniform estimates that will be used throughout the
analysis. Since $\M$ is compact, there exist finitely many pairs of open sets
$V_\ell,U_\ell\subseteq\R^d$ and smooth maps
$\psi_\ell:U_\ell\to\R^p$, $\ell=1,\ldots,q$, such that
$\overline V_\ell\subseteq U_\ell$, $\{V_\ell\cap\M\}_{\ell=1}^q$ covers
$\M$, and each $\psi_\ell$ is a local defining function of $\M$ on $U_\ell$
with $D\psi_\ell$ of full row rank on $U_\ell\cap\M$. For every $x\in\M$,
choose an index $\ell(x)$ such that $x\in V_{\ell(x)}$ and set
$B_x:=D\psi_{\ell(x)}(x).$
Then $\ker(B_x)=T_x\M$. Since each $\overline V_\ell\cap\M$ is compact, the
singular values of $D\psi_\ell$ on these sets are uniformly bounded above and
away from zero. Consequently, the constants in the estimates below can be
chosen uniformly over $x\in\M$ and independently of the choice of $\ell(x)$.
Their proofs are standard and are omitted for brevity.

\begin{lemma}[Uniform bounds for $B_x$]\label{lem:B-bounds}
Let $\M$ be a compact embedded Riemannian submanifold of $\R^d$. Then the
following statements hold.
\begin{enumerate}
\item[(i)] There exist constants $0<\underline C_B\leq\overline C_B$ such that,
for all $x\in\M$, the singular values of $B_x$ belong to
$[\underline C_B,\overline C_B]$.
\item[(ii)] For all $x\in\M$ and all $v\in\R^d$, there holds
$\norm{B_xv}\leq\overline C_B\norm v$.
\item[(iii)] There exists a constant $C_0>0$ such that, for all $x\in\M$ and
all $v\in\R^d$,
$\dist(v,T_x\M)\leq C_0\norm{B_xv}.$
Equivalently, $\norm{P_{N_x\M}v}\leq C_0\norm{B_xv}$.
\end{enumerate}
\end{lemma}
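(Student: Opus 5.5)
The plan is to derive all three statements from a compactness argument applied to the finite atlas $\{(V_\ell,U_\ell,\psi_\ell)\}_{\ell=1}^q$ fixed above, together with elementary linear algebra for a full-row-rank matrix $B\in\R^{p\times d}$. \emph{Singular values} will always mean the $p$ singular values of $B_x$, that is, the square roots of the eigenvalues of $B_xB_x^\trans$.

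For (i), I fix $\ell$ and consider the continuous map $y\mapsto D\psi_\ell(y)$ on the compact set $K_\ell:=\overline V_\ell\cap\M\subseteq U_\ell\cap\M$. On $K_\ell$ the matrix $D\psi_\ell(y)$ has full row rank $p$. The largest singular value $\sigma_{\max}$ is continuous in the matrix, and so is the smallest singular value $\sigma_{p}$ of a $p\times d$ matrix. Hence both attain their extrema on $K_\ell$. Full rank gives $\min_{K_\ell}\sigma_p(D\psi_\ell)>0$, and continuity gives $\max_{K_\ell}\sigma_{\max}(D\psi_\ell)<\infty$.

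I then set
\[
\underline C_B:=\min_{\ell}\min_{K_\ell}\sigma_p(D\psi_\ell),\qquad \overline C_B:=\max_\ell\max_{K_\ell}\sigma_{\max}(D\psi_\ell).
\]
Since $x\in V_{\ell(x)}\cap\M\subseteq K_{\ell(x)}$, the singular values of $B_x=D\psi_{\ell(x)}(x)$ lie in $[\underline C_B,\overline C_B]$, whatever choice of $\ell(x)$ is made. The only subtle point, and the step I regard as the main one, is to use the closures $\overline V_\ell$ rather than $V_\ell$, so that compactness applies inside $U_\ell$ where full rank holds. Finiteness of the atlas then gives uniformity.

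Statement (ii) is immediate: $\norm{B_xv}\le\sigma_{\max}(B_x)\norm v\le\overline C_B\norm v$.

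For (iii), I use $T_x\M=\ker B_x$, so $N_x\M=(\ker B_x)^\perp=\operatorname{range}(B_x^\trans)$. Write $w:=P_{N_x\M}v$. Then $\dist(v,T_x\M)=\norm{w}$ and $B_xv=B_xw$, because $v-w\in\ker B_x$. Since $w=B_x^\trans u$ for some $u\in\R^p$, the SVD gives
\[
\norm{B_xw}=\norm{B_xB_x^\trans u}\ge\sigma_p(B_x)\norm{B_x^\trans u}=\sigma_p(B_x)\norm w.
\]
Equivalently, $B_x$ restricted to $\operatorname{range}(B_x^\trans)$ is injective with smallest singular value $\sigma_p(B_x)$. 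Therefore
\[
\norm{P_{N_x\M}v}\le\sigma_p(B_x)^{-1}\norm{B_xv}\le\underline C_B^{-1}\norm{B_xv},
\]
so $C_0:=1/\underline C_B$ works. The stated equivalence is just the identity $\dist(v,T_x\M)=\norm{P_{N_x\M}v}$, valid because $T_x\M$ is a linear subspace with orthogonal complement $N_x\M$.
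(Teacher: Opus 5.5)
Your proposal is correct and takes the route the paper indicates. The paper omits the proof, noting only that each $\overline V_\ell\cap\M$ is compact, so the singular values of $D\psi_\ell$ there are uniformly bounded above and away from zero; you make this precise over the finite atlas to get (i)–(ii), and your SVD argument on $\mathrm{range}(B_x^{\top})=N_x\M$ gives (iii) with $C_0=1/\underline C_B$.
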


We next record several uniform retraction estimates that will be used
throughout the analysis; see, e.g., \cite{Chen2020ManPG,HuangWei2022}.

\begin{lemma}[Uniform local estimates for the retraction on a compact manifold]
\label{lem:retraction}
Suppose Assumption~\ref{ass:geometry} holds. Then there exist positive constants
$\delta,C_R,C_Q,L_R,L_\Phi$, and $L_F$ such that, for every $x\in\M$ and every
$\eta\in T_x\M$ with $\norm\eta\leq\delta$, the following estimates hold:
\[
\begin{aligned}
\norm{\Retr_x(\eta)-x}
&\leq C_R\norm{\eta},\\
\norm{\Retr_x(\eta)-x-\eta}
&\leq C_Q\norm{\eta}^2,\\
\left|R(\Retr_x(\eta))-R(x+\eta)\right|
&\leq L_R\norm{\eta}^2,\\
\left|\Phi(\Retr_x(\eta))-\Phi(x+\eta)\right|
&\leq L_\Phi\norm{\eta}^2,\\
F(\Retr_x(\eta))-F(x)-\ip{\grad F(x)}{\eta}
&\leq \frac{L_F}{2}\norm{\eta}^2.
\end{aligned}
\]
\end{lemma}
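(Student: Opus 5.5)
The plan is to derive all five estimates from one compactness argument on the set
\[
K:=\{(x,\eta)\in\TM:\norm{\eta}\leq\delta_0\}
\]
for a suitable $\delta_0>0$. Since $\M$ is compact and $\TM$ is a closed embedded submanifold of $\R^d\times\R^d$, the set $K$ is compact for every $\delta_0>0$. The retraction is smooth on $\TM$, so it extends locally to a smooth map on an open set of $\R^d\times\R^d$. Consequently its first and second derivatives along fibres, $D_\eta\Retr_x(\eta)$ and $D^2_\eta\Retr_x(\eta)$, are continuous and bounded on $K$. Set
\[
C_1:=\sup_K\norm{D_\eta\Retr_x(\eta)},\qquad C_2:=\sup_K\norm{D^2_\eta\Retr_x(\eta)}.
\]
Because $T_x\M$ is a linear space, the segment $t\eta$, $t\in[0,1]$, stays in the fibre. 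The mean value theorem then gives $\norm{\Retr_x(\eta)-x}\leq C_1\norm\eta$, so we may take $C_R=C_1$.

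For the second estimate I would use Taylor's theorem with integral remainder along $t\mapsto\Retr_x(t\eta)$. The conditions $\Retr_x(0)=x$ and $D\Retr_x(0)=\Id$ give
\[
\norm{\Retr_x(\eta)-x-\eta}\leq \tfrac12 C_2\norm\eta^2,
\]
so $C_Q=C_2/2$. The main care here is that the second derivative is taken within the tangent space and bounded uniformly over $K$. This is exactly where compactness of $\TM$ restricted to bounded fibres is used.

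Next I would choose $\delta\leq\delta_0$ small enough that $x+\eta$ and the whole segment between $x+\eta$ and $\Retr_x(\eta)$ lie in the neighborhood $U$ from Assumption~\ref{ass:geometry}(ii). This is possible uniformly: $\M$ is compact, so $\dist(\M,\R^d\setminus U')>0$ for some slightly smaller neighborhood $U'$ with compact closure in $U$. Fix the $L_R$ estimate next. Since $R$ is $\ell_R$-Lipschitz,
\[
\abs{R(\Retr_x(\eta))-R(x+\eta)}\leq \ell_R C_Q\norm\eta^2,
\]
so $L_R=\ell_R C_Q$. For $F$, note that $\nabla F$ is bounded on $\overline{U'}$ by some $G$. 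The mean value theorem on the segment yields $\abs{F(\Retr_x(\eta))-F(x+\eta)}\leq G C_Q\norm\eta^2$. Adding the two bounds gives $L_\Phi=(\ell_R+G)C_Q$.

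For the last estimate, write
\[
F(\Retr_x(\eta))-F(x)=[F(\Retr_x(\eta))-F(x+\eta)]+[F(x+\eta)-F(x)].
\]
The first bracket is bounded by $GC_Q\norm\eta^2$. For the second bracket, the descent lemma with the $\ell_f$-Lipschitz gradient on the segment $[x,x+\eta]\subseteq U'$ gives
\[
F(x+\eta)-F(x)\leq\ip{\nabla F(x)}{\eta}+\tfrac{\ell_f}{2}\norm\eta^2.
\]
Since $\eta\in T_x\M$, we have $\ip{\nabla F(x)}{\eta}=\ip{\grad F(x)}{\eta}$. So $L_F=\ell_f+2GC_Q$ works. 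All constants depend only on $\M$, $\Retr$, $\delta$, $\ell_f$, $\ell_R$, and $G$.

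The step I expect to be the main obstacle is the uniformity of $C_2$. It relies on the retraction being $C^2$ jointly in $(x,\eta)$ on the compact set $K$, which is a mild regularity assumption implicit in "smooth mapping $\Retr:\TM\to\M$".
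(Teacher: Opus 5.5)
Your proof is correct and is the standard argument behind the references the paper cites in place of a proof. Compactness of $\{(x,\eta)\in\TM:\norm{\eta}\leq\delta_0\}$ and smoothness of $\Retr$ give $C_R$ and $C_Q$ via Taylor expansion along $t\mapsto\Retr_x(t\eta)$, and the function estimates then follow from Lipschitz continuity of $R$ and of $\nabla F$ on a tubular neighborhood of $\M$. One small refinement is worth recording, since later stepsize bounds need $L_F$ and $L_\Phi$ to be independent of $n$: you can take $G:=M_f+\ell_f r$ on the tube $\{y:\dist(y,\M)\leq r\}\subseteq U$, because $\norm{\nabla f_j(y)}\leq\norm{\nabla f_j(x)}+\ell_f\norm{y-x}$ for a nearest point $x\in\M$.
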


The next estimate describes how normal vectors to $\M$ can be lifted to normal
vectors to the tangent bundle $\TM$. It is used later to estimate the
subdifferential of the shifted objective function on $\TM$. The proof can be
obtained by adapting the proof of Proposition~23 in \cite{Li2024Structured}.

\begin{lemma}[A normal lifting estimate on $\TM$]\label{lem:normal-lift}
Let $\M$ be a compact embedded Riemannian submanifold of $\R^d$. Then there
exists a constant $C_{\rm tb}>0$ such that, for every $x\in\M$, every
$\eta\in T_x\M$, and every $\zeta\in N_x\M$, there exists $u\in\R^d$
satisfying
\[(u,\zeta)\in N_{\TM}(x,\eta), \quad \norm{u-\zeta}\leq C_{\rm tb}\norm\eta\norm\zeta.\]
In particular, $\norm u\leq(1+C_{\rm tb}\norm\eta)\norm\zeta$.
\end{lemma}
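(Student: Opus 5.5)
The plan is to describe the normal space of $\TM$ through a local defining function and to build $u$ explicitly. Fix $x\in\M$ and the chart index $\ell=\ell(x)$, with defining function $\psi:=\psi_\ell$ on $U_\ell$. Near $(x,\eta)$ the tangent bundle is locally the zero set of
\[
G(y,v):=\bigl(\psi(y),\,D\psi(y)v\bigr)\in\R^p\times\R^p,
\]
since $T_y\M=\ker D\psi(y)$ for $y\in\M\cap U_\ell$. Its derivative at $(x,\eta)$ is
\[
DG(x,\eta)[(a,b)]=\bigl(B_xa,\ D^2\psi(x)[a,\eta]+B_xb\bigr),
\]
which has full row rank $2p$ because $B_x$ does. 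Hence $G$ is a local defining function for $\TM$, and $N_{\TM}(x,\eta)$ is the range of $DG(x,\eta)^*$. For $(\lambda,\mu)\in\R^p\times\R^p$, the adjoint gives
\[
DG(x,\eta)^*(\lambda,\mu)=\Bigl(B_x^*\lambda+\bigl(D^2\psi(x)[\cdot,\eta]\bigr)^*\mu,\ B_x^*\mu\Bigr).
\]

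Given $\zeta\in N_x\M=\operatorname{range}B_x^*$, choose the minimum-norm solution $\mu:=(B_xB_x^*)^{-1}B_x\zeta$, so that $B_x^*\mu=\zeta$. By Lemma~\ref{lem:B-bounds}(i), $\norm\mu\le\underline C_B^{-1}\norm\zeta$. Next choose $\lambda$ so that $B_x^*\lambda=\zeta-P_{N_x\M}\bigl(D^2\psi(x)[\cdot,\eta]^*\mu\bigr)$. This is possible because the right-hand side lies in $N_x\M=\operatorname{range}B_x^*$. Set $u:=B_x^*\lambda+D^2\psi(x)[\cdot,\eta]^*\mu$. Then
\[
u-\zeta=P_{T_x\M}\bigl(D^2\psi(x)[\cdot,\eta]^*\mu\bigr),
\qquad
\norm{u-\zeta}\le\norm{D^2\psi(x)}\,\norm\eta\,\norm\mu\le\frac{\sup\norm{D^2\psi_\ell}}{\underline C_B}\,\norm\eta\,\norm\zeta.
\]
A simpler choice, $\lambda$ with $B_x^*\lambda=\zeta-(\text{full term})$, does not work, because that right-hand side need not lie in the range of $B_x^*$. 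The projection step is exactly what makes the construction consistent.

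The main obstacle is uniformity. I would handle it by shrinking the chart sets, as set up before Lemma~\ref{lem:B-bounds}: take $\overline V_\ell\subseteq U_\ell$ compact, so that $\sup_{\overline V_\ell\cap\M}\norm{D^2\psi_\ell}<\infty$ for each $\ell$. Taking the maximum over the finitely many $\ell$ then gives a $C_{\rm tb}$ independent of $x$, $\eta$, and $\zeta$.

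Two remaining points need checking. First, the normal cone of the embedded submanifold $\TM$ is the linear normal space $\operatorname{range}DG^*$; this holds because the full-rank defining function represents $\TM$ near $(x,\eta)$, and $\eta$ may be arbitrary since the description holds for all $v$. Second, the final assertion follows from the triangle inequality: $\norm u\le\norm\zeta+\norm{u-\zeta}\le(1+C_{\rm tb}\norm\eta)\norm\zeta$.
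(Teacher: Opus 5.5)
Your proof is correct and follows essentially the argument the paper points to (adapting Proposition~23 of \cite{Li2024Structured}): describe $\TM$ locally by the full-rank map $(y,v)\mapsto(\psi(y),D\psi(y)v)$, identify $N_{\TM}(x,\eta)$ with the range of its adjoint Jacobian, and bound $D^2\psi_\ell$ uniformly over the finitely many compact pieces $\overline V_\ell\cap\M$. The projection step is an optional refinement, not something the construction needs: taking $\lambda=\mu$ already gives $(\zeta+D^2\psi(x)[\cdot,\eta]^*\mu,\,\zeta)\in N_{\TM}(x,\eta)$ with the same bound.
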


The next estimates follow from the smooth dependence of the tangent-space
projections and Riemannian gradients on the compact manifold $\M$, together
with the retraction estimate in Lemma~\ref{lem:retraction}.

\begin{lemma}[Local and pathwise projection estimates]\label{lem:projection}
Suppose Assumption~\ref{ass:geometry} holds. Then there exist constants
$\delta_T>0$ and $L_T>0$ such that the following statements hold.
\begin{enumerate}
\item[(i)] For every $j\in\{1,\ldots,n\}$, $x\in\M$, and $\eta\in T_x\M$
with $\norm\eta\leq\delta_T$, there holds
\[\norm{\grad f_j(\Retr_x(\eta)) -P_{T_{\Retr_x(\eta)}\M}\grad f_j(x)}\leq L_T\norm\eta.\]
The same estimate holds with $f_j$ replaced by $F$.
\item[(ii)] Let $x^0,\ldots,x^i\in\M$ satisfy
$x^{k+1}=\Retr_{x^k}(\eta^k)$, where $\eta^k\in T_{x^k}\M$ and
$\norm{\eta^k}\leq\delta_T$ for $k=0,\ldots,i-1$. Then, for every
$j\in\{1,\ldots,n\}$, there holds
\[\norm{\grad f_j(x^i)-P_{T_{x^i}\M}\grad f_j(x^0)} \leq L_T\sum_{k=0}^{i-1}\norm{\eta^k}.\]
The same bound holds for $F$.
\end{enumerate}
\end{lemma}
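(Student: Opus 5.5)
We prove (i) by splitting the difference through the ambient gradient at the new point, and then obtain (ii) by telescoping (i) along the path. Throughout, set $y:=\Retr_x(\eta)$.

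\emph{Step 1 (smoothness of the projection field).} Since $\M$ is compact and embedded, the map $x\mapsto P_{T_x\M}$ is smooth on $\M$. Locally it is $I-B_x^\trans(B_xB_x^\trans)^{-1}B_x$ for the defining maps $\psi_\ell$, and Lemma~\ref{lem:B-bounds}(i) controls the inverse uniformly. Hence there is a constant $L_P$ with
\[\norm{P_{T_y\M}-P_{T_x\M}}\leq L_P\norm{y-x}\qquad\text{for all } x,y\in\M.\]
This holds first for nearby points by the local formula. For distant points it follows from the trivial bound $\norm{P_{T_y\M}-P_{T_x\M}}\leq 2$ together with compactness, after enlarging $L_P$.

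\emph{Step 2 (proof of (i)).} Take $\delta_T\leq\delta$ from Lemma~\ref{lem:retraction}, so that $\norm{y-x}\leq C_R\norm\eta$. Since $P_{T_y\M}$ is idempotent, we can write
\[\grad f_j(y)-P_{T_y\M}\grad f_j(x)=P_{T_y\M}\bigl(\nabla f_j(y)-\nabla f_j(x)\bigr)+P_{T_y\M}\bigl(P_{T_y\M}-P_{T_x\M}\bigr)\nabla f_j(x).\]
For the first term, the segment $[x,y]$ lies in $U$ once $\delta_T$ is small, because $\M$ is compact and $U$ is open. Then Assumption~\ref{ass:geometry}(ii) bounds it by $\ell_f C_R\norm\eta$. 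The second term is at most $L_P C_R M_f\norm\eta$. Setting $L_T:=C_R(\ell_f+L_PM_f)$ proves (i). The same computation applies to $F$, since $\nabla F$ is $\ell_f$-Lipschitz and $\norm{\nabla F}\leq M_f$ on $\M$.

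For the segment claim, I would rather avoid segments altogether. Instead, I would use that each $\nabla f_j$ is Lipschitz on a compact tubular neighborhood $\overline{\M_r}\subseteq U$. On $\M$ itself, $\nabla f_j$ is Lipschitz with some constant by a chord-versus-geodesic comparison. Alternatively, I would simply shrink $\delta_T$ so that $C_R\delta_T<r$, which puts the segment inside $\M_r$.

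\emph{Step 3 (proof of (ii), the main obstacle).} Naive telescoping introduces products of projections, so the bookkeeping needs care. Define $e_i:=\grad f_j(x^i)-P_{T_{x^i}\M}\grad f_j(x^0)$. Then
\[e_{i}=\bigl[\grad f_j(x^i)-P_{T_{x^i}\M}\grad f_j(x^{i-1})\bigr]+P_{T_{x^i}\M}\bigl[\grad f_j(x^{i-1})-\grad f_j(x^0)\bigr].\]
The first bracket is at most $L_T\norm{\eta^{i-1}}$ by (i). For the second term, write $\grad f_j(x^{i-1})=e_{i-1}+P_{T_{x^{i-1}}\M}\grad f_j(x^0)$. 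This gives
\[P_{T_{x^i}\M}\bigl(e_{i-1}+P_{T_{x^{i-1}}\M}\grad f_j(x^0)-\grad f_j(x^0)\bigr).\]

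The leftover $P_{T_{x^i}\M}(P_{T_{x^{i-1}}\M}-I)\grad f_j(x^0)$ does not vanish, which is the difficulty. It is bounded by $\norm{P_{T_{x^i}\M}-P_{T_{x^{i-1}}\M}}\,M_f\leq L_PC_RM_f\norm{\eta^{i-1}}$, using $P_{T_{x^i}\M}(I-P_{T_{x^{i-1}}\M})=(P_{T_{x^i}\M}-P_{T_{x^{i-1}}\M})(I-P_{T_{x^{i-1}}\M})$. Therefore
\[\norm{e_i}\leq\norm{e_{i-1}}+\bigl(L_T+L_PC_RM_f\bigr)\norm{\eta^{i-1}},\]
and induction from $e_0=0$ gives the claim after enlarging $L_T$ to $C_R(\ell_f+2L_PM_f)$. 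This enlargement is harmless because the constant in (i) may also be taken this large. The same argument applies to $F$.
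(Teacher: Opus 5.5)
Your proof is correct and follows the route the paper indicates. The paper omits the details and cites only the smooth dependence of $x\mapsto P_{T_x\M}$ on the compact manifold together with the retraction bound $\norm{\Retr_x(\eta)-x}\leq C_R\norm\eta$, which is exactly your Steps 1--2.

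The induction in Step 3 is unnecessary, and so is the segment discussion. Your $L_P$ bound is global on $\M$, and Assumption~\ref{ass:geometry}(ii) already gives Lipschitz continuity of $\nabla f_j$ for any two points of $U$. So you can apply the Step 2 decomposition directly to $(x^0,x^i)$ and use $\norm{x^i-x^0}\leq C_R\sum_{k=0}^{i-1}\norm{\eta^k}$. This gives (ii) with the same constant $C_R(\ell_f+L_PM_f)$ as in (i).
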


\subsection{KL property}
We recall the Kurdyka--Lojasiewicz (KL) property used in Section~4.

\begin{definition}[KL property and KL exponent \cite{Attouch2013,Bolte2007}]
\label{def:KL}
Let $\phi:\R^d\to(-\infty,+\infty]$ be a proper lower semicontinuous function.
We say that $\phi$ satisfies the KL property at
$\bar x\in\dom\partial\phi$ if there exist $\varepsilon>0$, $\delta>0$, and a
continuous concave function $\varphi:[0,\varepsilon)\to[0,+\infty)$ such that
$\varphi(0)=0$, $\varphi$ is continuously differentiable on
$(0,\varepsilon)$, $\varphi'>0$ on $(0,\varepsilon)$, and
\[\varphi'(\phi(x)-\phi(\bar x))\dist(0,\partial\phi(x))\geq1\]
for all $x\in B(\bar x,\delta)$ satisfying
$\phi(\bar x)<\phi(x)<\phi(\bar x)+\varepsilon$. If, in addition,
$\varphi$ can be chosen as $\varphi(s)=cs^{1-\theta}$ for some $c>0$ and
$\theta\in[0,1)$, then $\phi$ is said to have KL exponent $\theta$ at $\bar x$.
\end{definition}

A proper lower semicontinuous function is called a KL function if it satisfies
the KL property at every point of its subdifferential domain. A useful class of
KL functions is provided by semialgebraic functions. In particular, proper
lower semicontinuous semialgebraic functions are KL functions
\cite{Bolte2007}, and the class of semialgebraic functions is stable under
finite sums, products, quotients with nonvanishing denominators, and addition
of indicator functions of semialgebraic sets \cite{Bolte2014}.

\begin{lemma}[Uniformized KL property \cite{Bolte2014}]\label{lem:uniform-KL}
Let $\Omega\subseteq\R^d$ be a compact set, and let
$\phi:\R^d\to(-\infty,+\infty]$ be a proper lower semicontinuous function
which is constant on $\Omega$. If $\phi$ satisfies the KL property at each point
of $\Omega$, then there exist $\varepsilon>0$, $\delta>0$, and a continuous
concave function $\varphi:[0,\varepsilon)\to[0,+\infty)$ satisfying
$\varphi(0)=0$, $\varphi\in C^1(0,\varepsilon)$, and $\varphi'>0$ on
$(0,\varepsilon)$, such that
\[\varphi'(\phi(z)-\phi(\bar z))\dist(0,\partial\phi(z))\geq1\]
for all $\bar z\in\Omega$ and all $z\in B(\bar z,\delta)$ satisfying
$\phi(\bar z)<\phi(z)<\phi(\bar z)+\varepsilon$.
\end{lemma}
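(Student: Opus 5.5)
The plan is the standard compactness argument of Bolte--Sabach--Teboulle. I would first reduce to a finite cover and then take the minimum of the KL neighborhood parameters and a common desingularizing function.

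\emph{Finite cover.} For each $\bar z\in\Omega$, the KL property gives $\varepsilon_{\bar z},\delta_{\bar z}>0$ and a function $\varphi_{\bar z}$ as in Definition~\ref{def:KL}. Let $\mu$ denote the common value of $\phi$ on $\Omega$. The open balls $B(\bar z,\delta_{\bar z}/2)$ cover the compact set $\Omega$, so finitely many of them suffice, centered at $\bar z_1,\ldots,\bar z_m$. I would set
\[
\varepsilon:=\min_i\varepsilon_{\bar z_i}>0,\qquad \delta:=\tfrac12\min_i\delta_{\bar z_i}>0.
\]
Now take any $\bar z\in\Omega$ and any $z\in B(\bar z,\delta)$ with $\mu<\phi(z)<\mu+\varepsilon$. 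Pick $i$ with $\bar z\in B(\bar z_i,\delta_{\bar z_i}/2)$. Then
\[
\norm{z-\bar z_i}<\delta+\tfrac12\delta_{\bar z_i}\leq\delta_{\bar z_i}.
\]
Because $\phi(\bar z_i)=\mu=\phi(\bar z)$, the KL inequality at $\bar z_i$ applies and gives $\varphi_{\bar z_i}'(\phi(z)-\mu)\,\dist(0,\partial\phi(z))\geq1$.

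\emph{Common desingularizing function.} I would define $\varphi:=\sum_{i=1}^m\varphi_{\bar z_i}$ restricted to $[0,\varepsilon)$. This $\varphi$ is continuous and concave, satisfies $\varphi(0)=0$, and is $C^1$ on $(0,\varepsilon)$ with $\varphi'>0$, since each summand has these properties on $[0,\varepsilon_{\bar z_i})\supseteq[0,\varepsilon)$. Each $\varphi_{\bar z_j}'>0$, so $\varphi'\geq\varphi_{\bar z_i}'$ for every $i$. Hence
\[
\varphi'(\phi(z)-\phi(\bar z))\,\dist(0,\partial\phi(z))\geq\varphi_{\bar z_i}'(\phi(z)-\mu)\,\dist(0,\partial\phi(z))\geq1.
\]
This holds whenever $\dist(0,\partial\phi(z))$ is finite. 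When $\partial\phi(z)=\emptyset$, the distance is $+\infty$ and the inequality is trivial.

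\emph{Main care point.} The delicate step is the half-radius trick. It ensures that a point $z$ near an arbitrary $\bar z\in\Omega$ lies in the full KL ball of some finite center. It also relies on $\phi$ being constant on $\Omega$, so that $\phi(\bar z)=\phi(\bar z_i)$; this is what lets the reference value be switched from $\bar z$ to $\bar z_i$. Replacing the minimum of the $\varphi_{\bar z_i}$ by their sum avoids any loss of differentiability or concavity, since a sum of concave $C^1$ functions is again concave and $C^1$.
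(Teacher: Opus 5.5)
Your proposal is correct and is essentially the Bolte--Sabach--Teboulle argument that the paper cites without reproducing: cover $\Omega$ by finitely many KL balls, take $\varepsilon$ as the minimum of the KL levels, and use the sum of the finitely many desingularizing functions as the common $\varphi$. The only difference is cosmetic. You get the uniform radius from the half-radius trick, whereas the cited proof chooses the radius small enough that the closed neighborhood of $\Omega$ of that radius lies in the union of the KL balls, and the two devices are equivalent.
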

\section{Inexact Riemannian proximal momentum variance reduction}
In this section, we present the proposed inexact Riemannian proximal momentum
variance-reduced framework and derive basic estimates for the inexact tangent
solve.

The algorithm is based on the tangent-space proximal model, which is standard
in ManPG-type methods for nonsmooth manifold optimization
\cite{Chen2020ManPG,Chen2024Survey}. At the $k$-th iteration, let $x^k\in\M$ be
the current iterate and let $m^k\in T_{x^k}\M$ be a tangent surrogate for
$\grad F(x^k)$. Given $m^k$, the exact tangent proximal step is defined by
\begin{equation}\label{eq:exact-step}
 \wh\eta^k=\argmin_{\eta\in T_{x^k}\M}
 \left\{\ip{m^k}{\eta}+\frac{1}{2\alpha}\norm\eta^2+R(x^k+\eta)\right\}.
\end{equation}
In our framework, $m^k$ is generated from variance-reduced gradient information
and projected momentum, and the above tangent proximal subproblem is solved only
approximately. To define the inexact tangent solve, recall that
$\ker(B_x)=T_x\M$. Hence the above subproblem can be written as
\[\min\{\ip{m^k}{\eta}+\frac{1}{2\alpha}\norm\eta^2
+ R(x^k+\eta): B_{x^k}\eta=0\}.\]
Following the Fenchel--Rockafellar framework
\cite[Definition~15.19 and Theorem~15.23]{Bauschke2017}, we introduce
\begin{equation}\label{eq:Fk}
 {F}_k(\eta):=\ip{m^k}{\eta}+\frac{1}{2\alpha}\norm\eta^2
 +R(x^k+\eta),\qquad g(z):=\ind_{\{0\}}(z),
\end{equation}
so that the constrained problem takes the form
$\min_{\eta\in\R^d}{F}_k(\eta)+g(B_{x^k}\eta)$. Since $g^*\equiv0$,
its Fenchel dual is 
\begin{equation}\label{eq:Gk}
\min_{\lambda\in\R^p}\left\{G_k(\lambda):={F}_k^*(-B_{x^k}^{\trans}\lambda)\right\}.
\end{equation}
Since ${F}_k$ is $1/\alpha$-strongly convex, its Fenchel conjugate
${F}_k^*$ has an $\alpha$-Lipschitz continuous gradient
\cite[Corollary~13.33 and Theorem~18.15]{Bauschke2017}. Consequently, $G_k$ is
convex and has an $\alpha\norm{B_{x^k}}^2$-Lipschitz continuous gradient. In the
algorithm, this dual problem is solved only approximately. More precisely, the
inner routine returns a dual vector $\lambda^k\in\R^p$ satisfying the stopping
condition $\norm{\nabla G_k(\lambda^k)}\leq\Delta_k$. Given a dual vector
$\lambda^k\in\R^p$, we associate with it the primal minimizer of the Lagrangian
subproblem
\begin{equation}\label{eq:w-lagrangian}
 w^k=\argmin_{\eta\in\R^d}
 \left\{{F}_k(\eta)+\ip{\lambda^k}{B_{x^k}\eta}\right\}.
\end{equation}
This minimization problem can be written as, by the definitions of
${F}_k$ and proximity operators,
\begin{equation}\label{eq:w-prox}
 w^k:=\prox_{\alpha R}
 \left(x^k-\alpha(m^k+B_{x^k}^{\trans}\lambda^k)\right)-x^k.
\end{equation}
The actual step used by the outer algorithm is the orthogonal projection of
$w^k$ onto the current tangent space:
$\eta^k:=P_{T_{x^k}\M}w^k$, and
$x^{k+1}=\Retr_{x^k}(\gamma_k\eta^k)$ with $0<\gamma_k\le 1$.

With this inexact tangent solve in place, we now describe the stochastic
variance-reduced construction of the gradient estimator. At each iteration
$k$, a random batch $\mathcal{B}_k$ is sampled according to the chosen
variance-reduction module, and this batch is used to construct a tangent
estimator $u^k\in T_{x^k}\M$ of $\grad F(x^k)$. The clipped estimator
$\wh u^k:=P_{\barB(0,M_u)}u^k$ is then combined with the projected momentum by
$m^k=\chi_k\bar m^k+(1-\chi_k)\wh u^k$. The vector $m^k$ is used in the
tangent proximal model, whose subproblem is solved approximately by the dual
residual criterion described above. Throughout, a batch $\mathcal{B}_k$ is
understood as a finite collection of indices drawn from $\{1,\ldots,n\}$.
Under sampling with replacement, repeated indices are allowed, and all sums
over $\mathcal{B}_k$ count multiplicities. This leads to the proposed inexact
Riemannian proximal momentum variance-reduced method, abbreviated as iRPMVR;
see Algorithm~\ref{alg:iRPMVR}. When $\chi_k=0$, $\Delta_k=0$, and the clipping
operation is removed, Algorithm~\ref{alg:iRPMVR} with the SARAH/SPIDER
estimator reduces, on the Stiefel manifold and up to the choice of vector
transport, to the R-ProxSPB method in \cite{Wang2022}.
\begin{center}
\refstepcounter{algorithm}\label{alg:iRPMVR}
{\normalfont\bfseries Algorithm~\thealgorithm.\quad
iRPMVR: An Inexact Riemannian Proximal Momentum Variance-Reduced Method}
\end{center}
\begin{enumerate}
\item Choose $x^0\in\M$, a stepsize $0<\alpha\leq\bar\alpha$, parameter
bounds $0<\gamma\leq1$, $\bar\Delta\geq0$, and $0\leq\bar\chi<1$, and a
clipping radius $M_u:=c_fM_f$, where $c_f\geq1$. The parameters
$\gamma_k,\Delta_k,\chi_k$ are selected before sampling $\mathcal{B}_k$, using
only the available history, and satisfy $0<\gamma\leq\gamma_k\leq1$,
$0\leq\Delta_k\leq\bar\Delta$, and $0\leq\chi_k\leq\bar\chi$. Initialize the
chosen variance-reduction state at $x^0$ so that
$u^0=\wh u^0=\grad F(x^0)$, and set $\bar m^0:=\grad F(x^0)$.
\item For $k=0,1,2,\ldots$, do:
  \begin{enumerate}
  \item[(a)] If $k\neq0$, sample a random batch $\mathcal{B}_k$ from
  $\{1,\ldots,n\}$, with or without replacement, according to the chosen
  variance-reduction module. Construct $u^k\in T_{x^k}\M$, an estimator of
  $\grad F(x^k)$, using $\mathcal{B}_k$ and the current estimator state. Set
  $\wh u^k=P_{\barB(0,M_u)}u^k$.
  \item[(b)] Set $m^k=\chi_k\bar m^k+(1-\chi_k)\wh u^k$.
  \item[(c)] Apply an inner routine to solve \eqref{eq:Gk} to obtain $\lambda^k\in\R^p$ with
  $\norm{\nabla G_k(\lambda^k)}\leq\Delta_k$.
  \item[(d)] Form $w^k$ by \eqref{eq:w-prox} and set
  $\eta^k=P_{T_{x^k}\M}w^k$.
  \item[(e)] Update $x^{k+1}=\Retr_{x^k}(\gamma_k\eta^k)$.
  \item[(f)] Project the momentum by
  $\bar m^{k+1}:=P_{T_{x^{k+1}}\M}m^k$.
  \end{enumerate}
\end{enumerate}

Since the projection onto $\barB(0,M_u)$ is radial, we have
$\wh u^k\in T_{x^k}\M$. Moreover, $\norm{\grad F(x^k)}\leq M_u$, and hence
$\norm{\wh u^k-\grad F(x^k)}\leq\norm{u^k-\grad F(x^k)}$. Finally, the
nonexpansiveness of the projections and an induction imply
\begin{equation}\label{eq:m-bounds}
 \norm{\bar m^k}\leq M_u,\qquad \norm{m^k}\leq M_u,\qquad k\in\N.
\end{equation}

We next state the abstract stochastic assumptions on the gradient-surrogate error. These assumptions will be used throughout the convergence analysis
and will later be verified for the concrete variance-reduction mechanisms
considered in Section~6. Let $\{\calF_k:k\in\N\}$ be the natural filtration
generated by the history of Algorithm~\ref{alg:iRPMVR} before the random
mini-batch $\mathcal{B}_k$ is sampled at iteration $k$. Thus, $x^k$, $\bar m^k$,
and, when needed, the previously constructed estimator $u^{k-1}$ are
$\calF_k$-measurable. The parameters $\gamma_k,\Delta_k,\chi_k$ are also
$\calF_k$-measurable since they are selected before sampling $\mathcal{B}_k$.
The quantities $\wh u^k,m^k,\lambda^k,w^k,\eta^k$ are then generated after
sampling $\mathcal{B}_k$. We write
$\E_k[\cdot]:=\E[\cdot\mid\calF_k]$. The following conditions are imposed for
every iteration $k$ and every admissible history generated by the algorithm.

\begin{assumption}[Abstract condition on the gradient-surrogate error]
\label{ass:estimator}
Let $e^k:=m^k-\grad F(x^k)$. There exist constants $c_e>0$, $\mu>0$,
$\nu>0$, and a sequence of nonnegative random variables
$\{\mathcal E_k:k\in\N\}$, with $\mathcal E_k$ being $\calF_k$-measurable for each
$k$ and $\E[\mathcal E_0]<+\infty$, such that for every $k\geq0$,
\begin{equation*}
 \E_k[\norm{e^k}^2]+\mu\mathcal E_k
 \leq c_e\bigl(\mathcal E_k-\E_k[\mathcal E_{k+1}]\bigr)
 +\nu\E_k[\norm{\eta^k}^2].
\end{equation*}
\end{assumption}

Assumption~\ref{ass:estimator} is closely related to the variance-reduced
estimator condition in \cite[Definition~2.1]{Driggs2021}. The mean-square bound
and geometric recursion in that condition can be combined, after a suitable
augmentation of the memory process, to yield an inequality of the above form.
In contrast, our analysis requires neither the separate first-moment bound nor
the additional estimator-convergence condition imposed there.

\subsection{Basic estimates for the inexact tangent solve}
We first relate the dual residual to the resulting primal inexactness.

\begin{lemma}[Primal error bounds from dual residual]\label{lem:primal-error}
	Suppose Assumption~\ref{ass:geometry} holds. Then there exist constants
	$C_1,C_2>0$, independent of $k$ and $n$, such that
	\[
	\begin{aligned}
		\norm{B_{x^k}w^k}&\leq\Delta_k, \qquad
		\norm{w^k-\eta^k}\leq C_0\Delta_k,\\
		\norm{\eta^k-\wh\eta^k}&\leq C_1\sqrt{\Delta_k}, \qquad
		\norm{w^k-\wh\eta^k}\leq C_2\sqrt{\Delta_k}
	\end{aligned}
	\]
	hold for all $k\geq0$, where $C_0$ is from
	Lemma~\ref{lem:B-bounds}.
\end{lemma}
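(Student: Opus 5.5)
The plan is to identify the dual gradient explicitly, read off the first two bounds, and then obtain the two $\sqrt{\Delta_k}$ bounds from strong convexity of ${F}_k$. The one non-routine ingredient is an a priori bound on the dual multipliers.

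\emph{Step 1 (first two bounds).} Since ${F}_k$ is $1/\alpha$-strongly convex, ${F}_k^*$ is differentiable. For any $y$, $\nabla {F}_k^*(y)$ is the unique maximizer of $\ip{y}{\eta}-{F}_k(\eta)$. With $y=-B_{x^k}^\trans\lambda^k$ this maximizer is exactly the Lagrangian minimizer $w^k$ in \eqref{eq:w-lagrangian}. By the chain rule, $\nabla G_k(\lambda^k)=-B_{x^k}w^k$. The stopping rule in step (c) then gives $\norm{B_{x^k}w^k}\le\Delta_k$. Since $w^k-\eta^k=P_{N_{x^k}\M}w^k$, Lemma~\ref{lem:B-bounds}(iii) gives $\norm{w^k-\eta^k}\le C_0\norm{B_{x^k}w^k}\le C_0\Delta_k$.

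\emph{Step 2 (a priori bounds, the main obstacle).} A direct strong-convexity comparison of $w^k$ and $\wh\eta^k$ produces a term $\ip{\lambda^k-\wh\mu^k}{B_{x^k}w^k}$, where $\wh\mu^k$ is a multiplier for \eqref{eq:exact-step}. Such a multiplier exists because the constraint $B_{x^k}\eta=0$ is linear and $R$ is finite-valued. This term is only $O(\Delta_k)$ if $\lambda^k$ and $\wh\mu^k$ are bounded uniformly in $k$, and nothing so far controls $\lambda^k$.

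I will obtain the bound from the optimality conditions. From \eqref{eq:w-prox}, $-(m^k+B_{x^k}^\trans\lambda^k+w^k/\alpha)\in\partial R(x^k+w^k)$, and $\partial R$ is bounded by $\ell_R$. Project onto $T_{x^k}\M$, using that $B_{x^k}^\trans\lambda^k\in N_{x^k}\M$ and $m^k\in T_{x^k}\M$. This gives $\norm{m^k+\eta^k/\alpha}\le\ell_R$, hence $\norm{\eta^k}\le\bar\alpha(M_u+\ell_R)$ by \eqref{eq:m-bounds}. Consequently $\norm{w^k}\le\bar\alpha(M_u+\ell_R)+C_0\bar\Delta$.

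Returning to the full inclusion, $\norm{B_{x^k}^\trans\lambda^k}\le M_u+\ell_R+\norm{w^k}/\alpha$. Since the singular values of $B_{x^k}$ are at least $\underline C_B$ (Lemma~\ref{lem:B-bounds}(i)), this yields a bound on $\norm{\lambda^k}$ uniform in $k$ and $n$. The same computation for $\wh\eta^k$, with $-(m^k+B_{x^k}^\trans\wh\mu^k+\wh\eta^k/\alpha)\in\partial R(x^k+\wh\eta^k)$, bounds $\norm{\wh\mu^k}$ in the same way.

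One point needs care: these bounds should not blow up as $\alpha\to0$. I will check this through the normal component. Because $w^k/\alpha$ has normal part $P_{N_{x^k}\M}w^k/\alpha$ and $\ell_R$ bounds the whole subgradient, I expect the normal part $\norm{P_{N_{x^k}\M}(m^k+B_{x^k}^\trans\lambda^k+w^k/\alpha)}$ to stay bounded. If this turns out to be delicate, I will keep $\alpha$-dependence and use $0<\alpha\le\bar\alpha$ together with a lower bound on $\alpha$ where it is needed.

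\emph{Step 3 (the $\sqrt{\Delta_k}$ bounds).} Write the two strong-convexity inequalities:
\begin{itemize}
\item ${F}_k+\ip{\lambda^k}{B_{x^k}\cdot}$ is minimized at $w^k$ and evaluated at $\wh\eta^k$;
\item ${F}_k+\ip{\wh\mu^k}{B_{x^k}\cdot}$ is minimized at $\wh\eta^k$ and evaluated at $w^k$.
\end{itemize}
Add them and use $B_{x^k}\wh\eta^k=0$. This gives
\[
\frac1\alpha\norm{w^k-\wh\eta^k}^2\le\norm{\lambda^k-\wh\mu^k}\,\norm{B_{x^k}w^k}\le C\Delta_k,
\]
so $\norm{w^k-\wh\eta^k}\le C_2\sqrt{\Delta_k}$ with $C_2:=\sqrt{\bar\alpha C}$. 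Finally, the triangle inequality together with $\Delta_k\le\sqrt{\bar\Delta}\sqrt{\Delta_k}$ gives
\[
\norm{\eta^k-\wh\eta^k}\le C_0\Delta_k+C_2\sqrt{\Delta_k}\le(C_0\sqrt{\bar\Delta}+C_2)\sqrt{\Delta_k},
\]
which defines $C_1$.
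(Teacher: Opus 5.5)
Your argument is correct, but for the two $\sqrt{\Delta_k}$ bounds it follows a genuinely different route from the paper.

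\textbf{Your route.} You compare $w^k$ and $\wh\eta^k$ symmetrically through two Lagrangians. This leaves the cross term $\ip{\wh\mu^k-\lambda^k}{B_{x^k}w^k}$, which forces you to bound both multipliers uniformly, using the KKT inclusions and the lower singular-value bound $\underline C_B$. You then obtain $\norm{\eta^k-\wh\eta^k}$ from $\norm{w^k-\wh\eta^k}$ by the triangle inequality.

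\textbf{The paper's route.} It proceeds in the opposite order. It compares the feasible point $\eta^k=P_{T_{x^k}\M}w^k$ with an arbitrary $v\in T_{x^k}\M$, and uses the optimality of $w^k$ only through a subgradient $z\in\partial R(x^k+w^k)$. Since $v-\eta^k\in T_{x^k}\M$, while $B_{x^k}^\trans\lambda^k$ and $\vartheta:=w^k-\eta^k$ lie in $N_{x^k}\M$, the multiplier term vanishes identically. The only cost of passing from $w^k$ to $\eta^k$ is $2\ell_R\norm{\vartheta}\le2\ell_RC_0\Delta_k$.

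\textbf{What each buys.} The paper needs no multiplier bound and no $\underline C_B$, and gets $C_1=2\sqrt{\ell_RC_0\bar\alpha}$ directly. More importantly, it produces inequality \eqref{eq:basic-Fk} for every tangent $v$. That inequality is reused with $v=0$ in Lemma~\ref{lem:step-size}, and with $v=\wh\eta^k$ in Theorem~\ref{thm:descent} and Proposition~\ref{prop:shifted-descent}; your route does not supply it. In exchange, your tangential projection of the KKT inclusion gives the sharper bound $\norm{\eta^k}\le\alpha(M_u+\ell_R)$, without the $\sqrt{\alpha\bar\Delta}$ term of Lemma~\ref{lem:step-size}.

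\textbf{On your $\alpha\to0$ concern.} It is unnecessary. Constants may depend on the fixed $\alpha$ anyway, and no lower bound on $\alpha$ is ever needed. Projecting the inclusions onto $N_{x^k}\M$ gives $\norm{B_{x^k}^\trans\lambda^k}\le\ell_R+C_0\Delta_k/\alpha$ and $\norm{B_{x^k}^\trans\wh\mu^k}\le\ell_R$. The resulting $1/\alpha$ is cancelled by the factor $\alpha$ in $\norm{w^k-\wh\eta^k}^2\le\alpha\norm{\lambda^k-\wh\mu^k}\Delta_k$, giving $\norm{w^k-\wh\eta^k}^2\le\underline C_B^{-1}(2\bar\alpha\ell_R+C_0\bar\Delta)\Delta_k$. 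This works provided you do not replace that $\alpha$ by $\bar\alpha$ before inserting the multiplier bound.
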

\begin{proof}
Fix $k\in\N$ and abbreviate \[x:=x^k, \quad B:=B_x, \quad w:=w^k, \quad \eta:=\eta^k,\quad \wh\eta:=\wh\eta^k.\]
Since
$G_k(\lambda)= F_k^*(-B_{x^k}^{\trans}\lambda)$, Danskin's theorem
\cite[Proposition~B.22]{Bertsekas2016} and the definition of $F_k$
in~\eqref{eq:Fk}
yield $\nabla G_k(\lambda^k)=-B_{x^k}w^k$. Hence the stopping rule implies
$\norm{B_{x^k}w^k}\leq\Delta_k$. Since
$\eta=P_{T_x\M}w$ and $\ker(B)=T_x\M$, Lemma~\ref{lem:B-bounds} implies
\[\norm{w-\eta}=\dist(w,T_x\M)\leq C_0\norm{Bw}\leq C_0\Delta_k.\]

We next estimate $\norm{\eta-\wh\eta}$.
Write $w=\eta+\vartheta$ with $\eta\in T_x\M$ and $\vartheta\in N_x\M$.
Then $\norm\vartheta\leq C_0\Delta_k$. By optimality of $w$ in the
Lagrangian subproblem~\eqref{eq:w-lagrangian}, there exists
$z\in\partial R(x+w)$ such that
$m^k+B^\trans\lambda^k+\frac1\alpha w+z=0.$
Hence,
\[m^k+\frac1\alpha\eta+z=-B^\trans\lambda^k-\frac1\alpha\vartheta.\]
Let $v\in T_x\M$. Since $\wh\eta$ minimizes $F_k$ over $T_x\M$,
$F_k(\wh\eta)\leq F_k(v)$. By convexity of $R$ and the
choice of $z$, we have $R(x+v)\geq R(x+w)+\ip{z}{v-w}$. Using this in
$F_k(v)-F_k(\eta)$ leads to
\[F_k(v)-F_k(\eta)\geq
\ip{m^k}{v-\eta}+\frac1{2\alpha}(\norm v^2-\norm\eta^2)
+\ip{z}{v-w}+R(x+w)-R(x+\eta).\]
Since $w=\eta+\vartheta$, this becomes
\[F_k(v)-F_k(\eta)\geq
\ip{m^k+\frac1\alpha\eta+z}{v-\eta}
+\frac1{2\alpha}\norm{v-\eta}^2
-\ip{z}{\vartheta}+R(x+w)-R(x+\eta).\]
Substituting
$m^k+\frac1\alpha\eta+z=-B^\trans\lambda^k-\frac1\alpha\vartheta$ and using
$v,\eta\in T_x\M=\ker(B)$ together with the fact that
$\vartheta\in N_x\M$, we see that the first inner product term vanishes.
Since $R$ is convex and globally $\ell_R$-Lipschitz by
Assumption~\ref{ass:geometry}, we get $\norm z\leq\ell_R$.
Also, $\abs{R(x+w)-R(x+\eta)}\leq\ell_R\norm{w-\eta}
=\ell_R\norm\vartheta$. Thus,
\[-\ip{z}{\vartheta}+R(x+w)-R(x+\eta)\geq-2\ell_R\norm\vartheta.\]
Therefore, for any $v\in T_x\M$,
\begin{equation}\label{eq:basic-Fk}
 F_k(v)-F_k(\eta^k)
 \geq \frac{1}{2\alpha}\norm{v-\eta^k}^2-2\ell_R\norm\vartheta
 \geq \frac{1}{2\alpha}\norm{v-\eta^k}^2-2\ell_RC_0\Delta_k.
\end{equation}
Taking $v=\wh\eta$ and using the optimality of $\wh\eta$, namely
$F_k(\wh\eta)\leq F_k(\eta)$, we get from
$\norm\vartheta\leq C_0\Delta_k$ that
$\norm{\eta-\wh\eta}\leq C_1\sqrt{\Delta_k}$ with
$C_1:=2\sqrt{\ell_RC_0\bar\alpha}$.

Finally, by the triangle inequality,
$\norm{w-\wh\eta}\leq\norm{w-\eta}+\norm{\eta-\wh\eta}
\leq C_0\Delta_k+C_1\sqrt{\Delta_k}$. Since $\Delta_k$ is uniformly bounded,
the right-hand side is bounded by $C_2\sqrt{\Delta_k}$ for some constant
$C_2>0$. This completes the proof.
\end{proof}

The preceding residual bounds also yield uniform step-size estimates. In
particular, for sufficiently small $\alpha$, all inexact tangent steps remain
in the neighborhood where the retraction and projection estimates of Section~2
are valid.

\begin{lemma}\label{lem:step-size}
Suppose Assumption~\ref{ass:geometry} holds. Then, for all $k\geq0$, we have
\[\norm{\eta^k}\leq(M_u+\ell_R)\alpha
+2\sqrt{\ell_RC_0\bar\Delta}\sqrt\alpha \quad \text{and} \quad 
\norm{\wh\eta^k}\leq(M_u+\ell_R)\alpha.\]
Moreover, if \[0<\bar\alpha\leq C_\alpha:= \left( \frac{\sqrt{\ell_RC_0\bar\Delta+(M_u+\ell_R)\min\{\delta,\delta_T\}} -\sqrt{\ell_RC_0\bar\Delta}}{M_u+\ell_R} \right)^2,\]
then we have
\[\max\{\norm{\eta^k},\norm{\wh\eta^k}\}\leq\min\{\delta,\delta_T\} \quad \text{for all} \quad k\geq0,\] where $\delta$ and $\delta_T$ are from
Lemmas~\ref{lem:retraction} and \ref{lem:projection}, respectively.
\end{lemma}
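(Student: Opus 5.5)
First I bound the exact step $\wh\eta^k$. Its optimality condition in \eqref{eq:exact-step} is
\[
0\in m^k+\tfrac1\alpha\wh\eta^k+P_{T_{x^k}\M}\partial R(x^k+\wh\eta^k).
\]
So there is $z\in\partial R(x^k+\wh\eta^k)$ with $\wh\eta^k=-\alpha P_{T_{x^k}\M}(m^k+z)$; this uses $m^k\in T_{x^k}\M$ and the fact that the subdifferential of $R$ restricted to an affine subspace is the projected subdifferential, since $R$ is finite convex. Global $\ell_R$-Lipschitz continuity of $R$ (Assumption~\ref{ass:geometry}(iii)) gives $\norm z\leq\ell_R$, and \eqref{eq:m-bounds} gives $\norm{m^k}\leq M_u$. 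Nonexpansiveness of the projection then yields $\norm{\wh\eta^k}\leq(M_u+\ell_R)\alpha$. As an alternative that avoids the subdifferential calculus, I would take $v=0$ in the strong convexity inequality $F_k(0)-F_k(\wh\eta^k)\geq\frac1{2\alpha}\norm{\wh\eta^k}^2$, together with $F_k(0)-F_k(\wh\eta^k)\le(M_u+\ell_R)\norm{\wh\eta^k}-\frac1{2\alpha}\norm{\wh\eta^k}^2$. This gives $\norm{\wh\eta^k}\leq(M_u+\ell_R)\alpha$ as well.

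For $\eta^k$, I use the triangle inequality and Lemma~\ref{lem:primal-error}: $\norm{\eta^k}\leq\norm{\wh\eta^k}+\norm{\eta^k-\wh\eta^k}\leq(M_u+\ell_R)\alpha+C_1\sqrt{\Delta_k}$. The stated bound has the factor $\sqrt\alpha$ and not the constant $C_1=2\sqrt{\ell_RC_0\bar\alpha}$, so I will not quote $C_1$ directly. Instead I rerun the final step of the proof of Lemma~\ref{lem:primal-error} with $\alpha$ in place of $\bar\alpha$. Inequality \eqref{eq:basic-Fk} with $v=\wh\eta^k$ and $F_k(\wh\eta^k)\le F_k(\eta^k)$ give $\norm{\eta^k-\wh\eta^k}^2\leq4\alpha\ell_RC_0\Delta_k$. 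Using $\Delta_k\leq\bar\Delta$, this becomes $\norm{\eta^k-\wh\eta^k}\leq2\sqrt{\ell_RC_0\bar\Delta}\sqrt\alpha$, which is the first claim.

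For the second claim, set $a:=M_u+\ell_R$, $b:=\sqrt{\ell_RC_0\bar\Delta}$, $r:=\min\{\delta,\delta_T\}$ and $t:=\sqrt\alpha$. It suffices to show $at^2+2bt\leq r$, because then both $\norm{\eta^k}$ and the smaller $\norm{\wh\eta^k}\le at^2$ are at most $r$. The quadratic $at^2+2bt-r$ has positive root $t_*=(\sqrt{b^2+ar}-b)/a$, and $C_\alpha=t_*^2$. Since $\alpha\leq\bar\alpha\leq C_\alpha$, we have $t\leq t_*$, and monotonicity of the quadratic on $t\ge0$ gives $at^2+2bt\leq r$.

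The only delicate point is obtaining the $\sqrt\alpha$ dependence rather than $\sqrt{\bar\alpha}$. This requires re-deriving the square-root estimate from \eqref{eq:basic-Fk} instead of citing the constant $C_1$; the rest is a direct quadratic computation.
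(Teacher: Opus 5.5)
Your proof is correct. For $\wh\eta^k$, your alternative argument is exactly the paper's: strong convexity from $0$, combined with the Lipschitz lower bound on $F_k(\wh\eta^k)-F_k(0)$. Your optimality-condition version is an equally valid shortcut.

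For $\eta^k$ you use a different decomposition. You go through the exact step, using $\norm{\eta^k}\le\norm{\wh\eta^k}+\norm{\eta^k-\wh\eta^k}$ and taking $v=\wh\eta^k$ in \eqref{eq:basic-Fk} to get $\norm{\eta^k-\wh\eta^k}^2\le4\alpha\ell_RC_0\Delta_k$. The paper instead takes $v=0$ in \eqref{eq:basic-Fk} and expands $F_k$ to obtain the quadratic inequality $\alpha^{-1}\norm{\eta^k}^2\le(M_u+\ell_R)\norm{\eta^k}+2\ell_RC_0\Delta_k$. It closes this with Young's inequality, giving $\norm{\eta^k}^2\le\alpha^2(M_u+\ell_R)^2+4\alpha\ell_RC_0\bar\Delta$ before taking square roots.

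The paper's route bounds the inexact step directly, without reference to $\wh\eta^k$, and its squared bound has no cross term. Yours is more modular: it reuses the $\wh\eta^k$ bound. It also makes explicit that the primal-error constant should scale with $\sqrt\alpha$ rather than $\sqrt{\bar\alpha}$; the paper sidesteps this by never invoking $C_1$ here. Both routes give the same final bound.

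Your root computation, identifying $C_\alpha$ as the square of the positive root of $at^2+2bt-r$, spells out what the paper says follows ``directly from the choice of $C_\alpha$''.
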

\begin{proof}
Fix $k\in\N$ and write $x:=x^k$ and $\eta:=\eta^k$. Taking $v=0$ in
\eqref{eq:basic-Fk} yields
\[F_k(\eta)\leq F_k(0)-(2\alpha)^{-1}\norm\eta^2
+2\ell_RC_0\Delta_k.\] Expanding $F_k$ and using
$\norm{m^k}\leq M_u$ and
$R(x+\eta)\geq R(x)-\ell_R\norm\eta$, we obtain
\[\alpha^{-1}\norm\eta^2 \leq(M_u+\ell_R)\norm\eta+2\ell_RC_0\Delta_k.\]
Since $\Delta_k\leq\bar\Delta$ and $\alpha\leq\bar\alpha$, Young's inequality
yields
\[\norm\eta^2\leq\frac12\norm\eta^2
+\frac{\alpha^2}{2}(M_u+\ell_R)^2
+2\alpha\ell_RC_0\bar\Delta.\] Thus the first inequality holds.

For $\wh\eta^k$, the strong convexity of $F_k$ on $T_{x^k}\M$
implies
\[ F_k(\wh\eta^k)\leq F_k(0)-(2\alpha)^{-1}
\norm{\wh\eta^k}^2.\] On the other hand, the globally Lipschitz continuity of
$R$, \eqref{eq:m-bounds} and \eqref{eq:Fk} yield
\[ F_k(\wh\eta^k)\geq F_k(0)
-(M_u+\ell_R)\norm{\wh\eta^k}
+(2\alpha)^{-1}\norm{\wh\eta^k}^2.\]
Combining the two inequalities implies the second inequality. The last
assertion follows directly from the choice of $C_\alpha$.
\end{proof}

\subsection{Function descent estimates}
We next derive a conditional expected function descent estimate by combining
the inexact tangent-solve bounds with the retraction estimates and the abstract
stochastic estimator condition.

\begin{theorem}[Conditional expected function descent]\label{thm:descent}
Suppose Assumptions~\ref{ass:geometry} and \ref{ass:estimator} hold. Assume
that
$0<\bar\alpha\leq\min\{C_\alpha,1/(L_F+2L_R)\},$
where $C_\alpha$ is from Lemma~\ref{lem:step-size}. Let
$\{(x^k,\eta^k):k\in\N\}$ be generated by Algorithm~\ref{alg:iRPMVR}, and let
$\wh\eta^k$ be defined by \eqref{eq:exact-step}. Define
$\sigma_0:=1/(2\alpha)-L_F/2-L_R$. Then, for every $k\geq0$ and every
$\tau>0$,
\begin{equation*}
\begin{aligned}
 \E_k[\Phi(x^{k+1})]
 \leq{}&\Phi(x^k)-\left(\sigma_0-\frac\tau2\right)\gamma_k
 \E_k[\norm{\eta^k}^2]
 -\frac{\gamma_k}{2\alpha}\E_k[\norm{\wh\eta^k}^2]\notag\\
 &+\frac{c_e}{2\tau}\bigl(\mathcal E_k-
 \E_k[\mathcal E_{k+1}]\bigr)-\frac{\mu}{2\tau}\mathcal E_k
 +\frac{\nu}{2\tau}\E_k[\norm{\eta^k}^2]+2\ell_RC_0\Delta_k,
\end{aligned}
\end{equation*}
where $C_0$ is from Lemma~\ref{lem:B-bounds}.
\end{theorem}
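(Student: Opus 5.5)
The plan is to obtain a pathwise one-step inequality for $\Phi(x^{k+1})$ that holds for every realization of the batch $\mathcal B_k$, and only then take $\E_k$ and invoke Assumption~\ref{ass:estimator}. Throughout, write $x:=x^k$, $\eta:=\eta^k$, $\wh\eta:=\wh\eta^k$, $e:=e^k$ and $\gamma:=\gamma_k$.

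\textbf{Step 1: retraction estimates.} By Lemma~\ref{lem:step-size} and $\bar\alpha\le C_\alpha$, we have $\norm{\gamma\eta}\le\norm\eta\le\delta$. Hence Lemma~\ref{lem:retraction} applies to $\gamma\eta$ and gives
\[
F(x^{k+1})\le F(x)+\gamma\ip{\grad F(x)}{\eta}+\tfrac{L_F}{2}\gamma^2\norm\eta^2,
\qquad
R(x^{k+1})\le R(x+\gamma\eta)+L_R\gamma^2\norm\eta^2 .
\]
Convexity of $R$ gives $R(x+\gamma\eta)\le(1-\gamma)R(x)+\gamma R(x+\eta)$. Using also $\gamma^2\le\gamma$, these combine to
\[
\Phi(x^{k+1})\le\Phi(x)+\gamma\bigl[\ip{m^k}{\eta}+R(x+\eta)-R(x)\bigr]-\gamma\ip{e}{\eta}+\gamma\Bigl(\tfrac{L_F}{2}+L_R\Bigr)\norm\eta^2 .
\]
Here I have substituted $\grad F(x)=m^k-e$.

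\textbf{Step 2: the model term (key step).} I want the bracket to produce both $-\norm\eta^2/(2\alpha)$ and $-\norm{\wh\eta}^2/(2\alpha)$.

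First, take $v=\wh\eta$ in \eqref{eq:basic-Fk}, which yields $F_k(\eta)\le F_k(\wh\eta)+2\ell_RC_0\Delta_k$. Second, since $F_k$ is $1/\alpha$-strongly convex on $T_x\M$ and $\wh\eta$ is its minimizer, $F_k(\wh\eta)\le F_k(0)-\norm{\wh\eta}^2/(2\alpha)$. Chaining these two bounds and expanding $F_k$ via \eqref{eq:Fk} gives
\[
\ip{m^k}{\eta}+R(x+\eta)-R(x)\le-\tfrac{1}{2\alpha}\norm\eta^2-\tfrac1{2\alpha}\norm{\wh\eta}^2+2\ell_RC_0\Delta_k .
\]
I expect this to be the main point where care is needed. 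Taking $v=0$ in \eqref{eq:basic-Fk} instead would give $-\norm\eta^2/\alpha$ but lose the $\wh\eta$ term, and the stated form of the theorem requires the split above. Multiplying by $\gamma$ and using $\gamma\le1$ on the $\Delta_k$ term turns the coefficient of $\norm\eta^2$ into $-\gamma\sigma_0$.

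\textbf{Step 3: error term and conditional expectation.} By Young's inequality,
\[
-\gamma\ip{e}{\eta}\le\tfrac{\tau\gamma}{2}\norm\eta^2+\tfrac{\gamma}{2\tau}\norm e^2\le\tfrac{\tau\gamma}{2}\norm\eta^2+\tfrac1{2\tau}\norm e^2 .
\]
I then take $\E_k$. The quantities $x^k$, $\gamma_k$, $\Delta_k$ and $\mathcal E_k$ are $\calF_k$-measurable, so they factor out of the conditional expectation. Finally, I bound $\E_k[\norm e^2]$ by Assumption~\ref{ass:estimator}:
\[
\E_k[\norm e^2]\le c_e\bigl(\mathcal E_k-\E_k[\mathcal E_{k+1}]\bigr)-\mu\mathcal E_k+\nu\E_k[\norm\eta^2].
\]
Dividing this by $2\tau$ yields exactly the memory and tail terms in the statement. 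The hypothesis $\bar\alpha\le1/(L_F+2L_R)$ only ensures $\sigma_0\ge0$ and is not otherwise needed for the inequality itself. Integrability holds because all quantities involved are bounded by \eqref{eq:m-bounds}, Lemma~\ref{lem:step-size}, and the compactness of $\M$.
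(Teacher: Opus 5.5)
Your proposal is correct and follows essentially the same route as the paper's proof. Both chain \eqref{eq:basic-Fk} at $v=\wh\eta^k$ with the strong-convexity bound $F_k(\wh\eta^k)\le F_k(0)-\norm{\wh\eta^k}^2/(2\alpha)$, combine this with convexity of $R$ along $x^k+\gamma_k\eta^k$ and the retraction estimates using $\gamma_k^2\le\gamma_k\le1$, and finish with Young's inequality and Assumption~\ref{ass:estimator} after conditioning on $\calF_k$.
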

\begin{proof}
Taking $v=\wh\eta^k$ in inequality~\eqref{eq:basic-Fk} and using the strong convexity of
$F_k$ on $T_{x^k}\M$, we obtain
\[ F_k(\eta^k)\leq  F_k(\wh\eta^k)+C_\Delta\Delta_k \leq F_k(0)-\frac{1}{2\alpha}\norm{\wh\eta^k}^2 +C_\Delta\Delta_k,\]
where $C_\Delta:=2\ell_RC_0$. Multiplying this inequality by $\gamma_k$ and
using the convexity of $R$, we have
\[R(x^k+\gamma_k\eta^k) \leq R(x^k)-\gamma_k\ip{m^k}{\eta^k} -\frac{\gamma_k}{2\alpha}\norm{\eta^k}^2 -\frac{\gamma_k}{2\alpha}\norm{\wh\eta^k}^2 +\gamma_k C_\Delta\Delta_k.\]
By Lemma~\ref{lem:step-size}, $\gamma_k\leq1$ and the choice of $\bar\alpha$,
Lemma~\ref{lem:retraction} yields
\[F(x^{k+1})\leq F(x^k)+\gamma_k\ip{\grad F(x^k)}{\eta^k}
+(L_F/2)\gamma_k^2\norm{\eta^k}^2\] and
\[R(x^{k+1})\leq R(x^k+\gamma_k\eta^k)
+L_R\gamma_k^2\norm{\eta^k}^2.\] Combining the preceding three inequalities
and using $(\gamma_k)^2\leq\gamma_k\leq1$, we get
\[\Phi(x^{k+1})\leq\Phi(x^k)-\sigma_0\gamma_k\norm{\eta^k}^2 -\frac{\gamma_k}{2\alpha}\norm{\wh\eta^k}^2 -\gamma_k\ip{e^k}{\eta^k}+C_\Delta\Delta_k.\]
By Young's inequality and $\gamma_k\leq1$, we have
\[-\gamma_k\ip{e^k}{\eta^k}\leq
(\tau/2)\gamma_k\norm{\eta^k}^2+(2\tau)^{-1}\norm{e^k}^2.\]
Taking conditional expectation with respect to $\calF_k$ and using
Assumption~\ref{ass:estimator}, we obtain the desired estimate.
\end{proof}
\section{Asymptotic convergence under the KL property}
In this section, we establish the full sequential convergence of iRPMVR under
the KL property. The analysis has two parts. We first prove an abstract KL
principle for descent relations with memory terms and summable tails, and then
apply it to a shifted objective on the tangent bundle. Throughout this section,
we work in the full-step regime $\gamma_k\equiv1$.

Some stochastic KL analyses use an expected-KL inequality to derive
finite-length estimates and whole-sequence convergence in expectation from
conditional expected descent. This inequality relates the expected objective
gap to the expected subdifferential distance through a single desingularizing
function. A key justification for this approach is Lemma~4.5 of
\cite{Driggs2021}, which claims that the ordinary KL property with a common
exponent implies such an expected-KL inequality with a desingularizing function
independent of the iteration. However, its proof is incorrect: the finite-sum
KL calculus used there yields, for each $k$, only an iteration-dependent
function $\varphi_k(s)=a_ks^{1-\theta}$, while the asserted uniform boundedness
of $\{a_k:k\in\N\}$ is not justified. The following example shows that this gap
is substantive: even a finite-length convergent stochastic process satisfying
the standard tail-free conditional expected descent and relative-error
conditions need not admit an iteration-independent power-type expected-KL
inequality for any exponent $\theta\in[0,1)$.

\begin{example}[Failure of uniform power-type expected-KL inequalities]
\label{ex:expected-KL}
Let $\Phi(x)=x^2$, $\Phi^*=0$, and $r\in(0,1)$. Let
$\{\xi_k:k\geq1\}$ be independent Bernoulli random variables with
$\mathbb P(\xi_k=1)=2^{-(2k-1)}$, and define $X^0=1$ and
$X^{k+1}=r\xi_{k+1}X^k$. Set
$\calF_k:=\sigma(\xi_1,\ldots,\xi_k)$ and
$D_k:=\abs{X^{k+1}-X^k}$. Writing
$p_{k+1}:=\mathbb P(\xi_{k+1}=1)$, direct calculation gives
\[\E_k[\Phi(X^{k+1})+D_k^2]
=\bigl(1-2p_{k+1}r(1-r)\bigr)\Phi(X^k)\leq\Phi(X^k) \quad \text{almost surely}.\]
Moreover, $D_k\geq(1-r)\abs{X^k}$, so
\[\dist(0,\partial\Phi(X^k))=2\abs{X^k}\leq2D_k/(1-r).\] Each sample path
either follows $r^k$ forever or eventually reaches zero; in either case,
$\sum_{k=0}^\infty D_k=1$ and $X^k\to0$. The function $\Phi$ is analytic and
semialgebraic with KL exponent $1/2$ at zero. Since
$\mathbb P(X^k=r^k)=\prod_{j=1}^k2^{-(2j-1)}=2^{-k^2}$, we have
\[\E[\Phi(X^k)-\Phi^*]=2^{-k^2}r^{2k}, \quad \E[\dist(0,\partial\Phi(X^k))]=2^{1-k^2}r^k.\]
Hence, for any $\theta\in[0,1)$, $a>0$, and
$\varphi(s)=as^{1-\theta}$, there holds 
\[\varphi'(\E[\Phi(X^k)-\Phi^*]) \E[\dist(0,\partial\Phi(X^k))] =2a(1-\theta)2^{-(1-\theta)k^2}r^{(1-2\theta)k}\to0.\]
Thus, no iteration-independent power-type desingularizing function, with any
exponent $\theta\in[0,1)$, satisfies the expected-KL inequality along this
sequence. In particular, the uniform same-exponent expected-KL inequality
constructed in the proof of Lemma~4.5 and subsequently used in the proofs of
Lemma~4.6 and Theorem~4.8 of \cite{Driggs2021} does not follow from the ordinary
KL property and the standard descent and relative-error conditions.
\end{example}

The example shows that the ordinary KL property, even together with standard
conditional expected descent and relative error, does not imply an
iteration-independent power-type expected-KL inequality for any exponent. We
therefore bypass expected-KL inequalities and instead apply the ordinary KL
inequality pathwise at the current random iterate before taking conditional
expectations, combining it with an augmented supermartingale argument. In its
tail-free specialization, the abstract principle below applies to
Example~\ref{ex:expected-KL} and yields
$\sum_{k=0}^\infty D_k<+\infty$ and $X^k\to0$ almost surely.

\subsection{A unified KL theorem with memory terms and summable tail perturbations}
The following theorem provides a KL principle for conditional expected descent
relations with multiple memory terms and summable tail perturbations. Part~(I) does not invoke the KL property: under conditional expected descent,
relative error, and continuity, it yields almost-sure convergence of the
objective values and stationarity of all cluster points. Part~(II) uses the ordinary pointwise KL property, together
with the deterministic limiting value and compactness assumptions, to upgrade
these conclusions to finite length and whole-sequence convergence.

\begin{theorem}[Unified stochastic KL principle]
	\label{thm:abstract-KL}
	Let $(\Omega,\mathcal F,\mathbb P)$ be a probability space with a filtration
	$\{\mathcal F_k:k\in\mathbb N\}$, and write
	$\mathbb E_k[\cdot]:=\mathbb E[\cdot\mid\mathcal F_k]$. Let \(\Psi:\mathbb R^d\to(-\infty,+\infty]\) be proper and
	lower semicontinuous. Assume that there exists a deterministic
	constant \(\underline\Psi\in\mathbb R\) such that
	\(\Psi(X^k)\ge\underline\Psi\) almost surely for every
	\(k\in\mathbb N\). Let $\{X^k:k\in\mathbb N\}\subset\dom\Psi$ be adapted, and
	let $\{\lambda_{\ell,k}:k\in\mathbb N\}$, $\ell=1,\ldots,m$, be nonnegative
	adapted sequences. Let $\{\sigma_k:k\in\mathbb N\}$ be nonnegative and
	deterministic, set
	\[
	S_k:=\sum_{j=k+1}^{\infty}\sigma_j, \quad 
	D_k:=\sum_{\ell=1}^m(\lambda_{\ell,k+1}+\lambda_{\ell,k}+\lambda_{\ell,k-1}),
	\]
	and assume that $\sum_{k=0}^{\infty}S_k^q<+\infty$ for some $q\in(0,1)$.
	Suppose that the quantities involved in the conditional expectations below are
	integrable and that, for all $k\ge k_0$, where $k_0$ is deterministic, the
	following conditions hold.
	
	{\rm (i)} {\rm(conditional expected descent)} There exist constants
	$a_{1,\ell}>0$ and $a_{2,\ell},a_{3,\ell}\in\mathbb R$, $\ell=1,\ldots,m$,
	such that $a_{2,\ell}+\max\{a_{3,\ell},0\}<a_{1,\ell}$ and
	\[
	\mathbb E_k[\Psi(X^{k+1})]
	\le
	\Psi(X^k)
	-\sum_{\ell=1}^m a_{1,\ell}\mathbb E_k[\lambda_{\ell,k+1}^2]
	+\sum_{\ell=1}^m a_{2,\ell}\lambda_{\ell,k}^2
	+\sum_{\ell=1}^m a_{3,\ell}\lambda_{\ell,k-1}^2
	+\sigma_{k+1}.
	\]
	
	{\rm (ii)} {\rm(Relative error)} There exists $C_{\rm re}>0$ such that
	\[
	\dist(0,\partial\Psi(X^k))\le C_{\rm re}(D_k+S_k^q)
	\]
	for all sufficiently large $k$.
	
	{\rm (iii)} {\rm(Continuity)}  \(\Psi\) is continuous relative to \(\dom\Psi\).
	
	Then the following statements hold:
	
	{\rm (I)} The sequence $\{\Psi(X^k):k\in\mathbb N\}$ converges almost surely to a
	finite random variable $\Psi_\infty$. Moreover, $S_k\to0$ and  \(\lambda_{\ell,k}\to0\) almost
	surely, \(\ell=1,\ldots,m.\) In addition, every cluster
	point of $\{X^k:k\in\mathbb N\}$ is almost surely stationary, that is, it
	belongs to \(\crit\Psi:=\{x\in\mathbb{R}^d: 0\in\partial \Psi (x)\}\).
	
	{\rm (II)} Assume further that the following conditions hold.
	\begin{itemize}
		\item[{\rm (iv)}] {\rm(Deterministic limiting value)} There exists a
		deterministic constant $\Psi_*$ such that $\Psi_\infty=\Psi_*$ almost surely.
		
		\item[{\rm (v)}] {\rm(Compactness)} There exists a deterministic compact set
		$S_\Psi\subset\mathbb R^d$ such that
		$\{X^k:k\in\mathbb N\}\subseteq S_\Psi$ almost surely.
		
		\item[{\rm (vi)}] {\rm(KL property)} Let
		\(
		\Omega_*:=\{x\in S_\Psi:\Psi(x)=\Psi_*,\ 0\in\partial\Psi(x)\}.
		\)
		Assume that $\Omega_*$ is nonempty and compact and that $\Psi$
		satisfies the KL property at every point of $\Omega_*$.
	\end{itemize}
	Then,
	\(\sum_{k=0}^{\infty}D_k<+\infty\) 
	almost surely. If, in addition, there exists $C_S>0$ such that
	\[\text{(Successive error)} \quad \|X^{k+1}-X^k\|\le C_S(D_{k+1}+S_k^q)
	\]
	holds for all
	sufficiently large $k$,  then
	\(
	\sum_{k=0}^{\infty}\|X^{k+1}-X^k\|<+\infty\) 
	almost surely. Consequently, $\{X^k:k\in\mathbb N\}$ converges almost surely
	to some $X^*\in\crit\Psi$.
\end{theorem}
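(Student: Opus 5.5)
The plan is to build an augmented Lyapunov function that absorbs the memory and tail terms, apply the Robbins--Siegmund supermartingale theorem for Part~(I), and then, for Part~(II), apply the uniformized KL inequality pathwise at $X^k$ before taking conditional expectations.

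\textbf{Part (I).} Fix $\varepsilon_0>0$ so small that, for each $\ell$,
\[
\rho_\ell:=\max\{a_{3,\ell},0\}+\varepsilon_0,\qquad \beta_\ell:=a_{2,\ell}+\rho_\ell+\varepsilon_0
\]
satisfy $\beta_\ell<a_{1,\ell}-\varepsilon_0$. This is possible because $a_{2,\ell}+\max\{a_{3,\ell},0\}<a_{1,\ell}$. Set
\[
H_k:=\Psi(X^k)+\sum_{\ell=1}^m\bigl(\beta_\ell\lambda_{\ell,k}^2+\rho_\ell\lambda_{\ell,k-1}^2\bigr)+S_k .
\]
Since $S_{k+1}+\sigma_{k+1}=S_k$, condition (i) gives, for $k\ge k_0$,
\[
\mathbb E_k[H_{k+1}]\le H_k-\varepsilon_0\sum_{\ell=1}^m\bigl(\mathbb E_k[\lambda_{\ell,k+1}^2]+\lambda_{\ell,k}^2+\lambda_{\ell,k-1}^2\bigr).
\]
Moreover $H_k\ge\underline\Psi$, so the Robbins--Siegmund theorem applied to $H_k-\underline\Psi$ yields two facts almost surely: $H_k$ converges, and $\sum_k\sum_\ell\lambda_{\ell,k}^2<\infty$. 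Hence $\lambda_{\ell,k}\to0$. Also $S_k\to0$ deterministically, because $\sum_k S_k^q<\infty$. Therefore $\Psi(X^k)=H_k-(\text{memory})-S_k$ converges a.s. to a finite limit $\Psi_\infty$, and $D_k\to0$.

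For the cluster points, fix an $\omega$ in this full-measure event and a subsequence $X^{k_j}\to\bar x$.
\begin{itemize}
\item Condition (ii) gives $\dist(0,\partial\Psi(X^{k_j}))\to0$, so we can pick $v_j\in\partial\Psi(X^{k_j})$ with $v_j\to0$.
\item Lower semicontinuity and closedness of $\dom\Psi$ relative to the limit, together with (iii), give $\Psi(X^{k_j})\to\Psi(\bar x)$.
\item Outer semicontinuity of the limiting subdifferential along $\Psi$-attentive sequences then gives $0\in\partial\Psi(\bar x)$.
\end{itemize}

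\textbf{Part (II), localization.} By (iv)--(v) and Part~(I), almost surely every cluster point lies in $\Omega_*$. Since $S_\Psi$ is compact, this gives $\dist(X^k,\Omega_*)\to0$ and $H_k\downarrow$-converges to $\Psi_*$ (the memory terms and $S_k$ vanish). Lemma~\ref{lem:uniform-KL} applied on $\Omega_*$ provides $\varepsilon,\delta,\varphi$.

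Set $Z_k:=H_k-\Psi_*$ and introduce the $\mathcal F_k$-measurable events
\[
A_k:=\{\dist(X^k,\Omega_*)<\delta,\ 0\le Z_k<\varepsilon\},
\]
together with the stopped indicators $\mathbf 1_{A_k\cap\cdots\cap A_K}$. Almost surely the path eventually stays in $A_k$, so it suffices to prove finite length on each event $\bigcap_{k\ge K}A_k$ and then take a countable union over $K$.

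\textbf{Part (II), the KL step.} On $A_k$, concavity of $\varphi$ and Jensen's inequality give
\[
\varphi(Z_k)-\mathbb E_k[\varphi(Z_{k+1})]\ \ge\ \varphi'(Z_k)\bigl(Z_k-\mathbb E_k[Z_{k+1}]\bigr)\ \ge\ \varepsilon_0\,\varphi'(Z_k)\,\Lambda_k,
\]
where $\Lambda_k:=\sum_\ell(\mathbb E_k\lambda_{\ell,k+1}^2+\lambda_{\ell,k}^2+\lambda_{\ell,k-1}^2)$. To get the conditional expectation past $\varphi$ we need $\mathbb E_k Z_{k+1}$ in the domain; I would extend $\varphi$ linearly beyond $\varepsilon$ so that this is harmless.

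The main obstacle is lower-bounding $\varphi'(Z_k)$. KL only gives $\varphi'(\Psi(X^k)-\Psi_*)\dist(0,\partial\Psi(X^k))\ge1$, while $Z_k\ge\Psi(X^k)-\Psi_*$ and $\varphi'$ is decreasing, so the inequality runs the wrong way. I would avoid quasi-additivity by a case split at each $k$.
\begin{itemize}
\item \emph{Case (a):} $\Psi(X^k)-\Psi_*\ge\frac12Z_k$. Here I will use the concavity consequence $\varphi'(Z_k)\ge\varphi'(2s)$ only through the secant bound $\varphi(2s)-\varphi(s)\ge s\varphi'(2s)$. I will replace $\varphi$ by $\tilde\varphi(s):=\varphi(2s)$, which is again desingularizing (with constant $2$), and apply KL to it at $X^k$. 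Combined with (ii), this gives $\varepsilon_0\Lambda_k\le C(D_k+S_k^q)\bigl(\tilde\varphi(Z_k)-\mathbb E_k\tilde\varphi(Z_{k+1})\bigr)$.
\item \emph{Case (b):} otherwise, the memory part $Z_k-(\Psi(X^k)-\Psi_*)$ is comparable to $Z_k$, so $Z_k\lesssim\sum_\ell(\lambda_{\ell,k}^2+\lambda_{\ell,k-1}^2)+S_k$. Here I bound $\varphi'(Z_k)\ge\varphi(Z_k)/Z_k$ by concavity, and subadditivity of $\varphi$ yields the same type of estimate with $D_k$ replaced by $\sqrt{\Lambda_k}+S_k^{1/2}$.
\end{itemize}

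In both cases, $2\sqrt{ab}\le a+b$ converts the bound into
\[
\mathbb E_k\bigl[\textstyle\sum_\ell\lambda_{\ell,k+1}\bigr]+\sum_\ell\lambda_{\ell,k}\le\tfrac12(\text{same at }k-1)+C\bigl(\tilde\varphi(Z_k)-\mathbb E_k\tilde\varphi(Z_{k+1})\bigr)+CS_k^q .
\]
Here Jensen is used for $\mathbb E_k\lambda\le(\mathbb E_k\lambda^2)^{1/2}$, and $S_k\le S_k^q$ eventually. This is again a Robbins--Siegmund recursion in the augmented variable $\tilde\varphi(Z_k)+\frac12\sum_\ell\lambda_{\ell,k}$, with summable perturbation $CS_k^q$. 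It yields $\sum_k\sum_\ell\lambda_{\ell,k}<\infty$ a.s., and hence $\sum_k D_k<\infty$.

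Finally, the successive-error bound gives $\sum_k\|X^{k+1}-X^k\|\le C_S\sum_k(D_{k+1}+S_k^q)<\infty$. So $\{X^k\}$ is a.s.\ Cauchy, and by Part~(I) its limit lies in $\crit\Psi$.
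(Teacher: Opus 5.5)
Your overall architecture is sound: an augmented Lyapunov function, Robbins--Siegmund, the KL inequality applied pathwise before conditioning, and stopped indicators. The problem is that the step you yourself flag as the main obstacle is not resolved.

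\emph{Case~(a).} The function $\tilde\varphi(t):=\varphi(2t)$ is not desingularizing for $\Psi$. Indeed $\tilde\varphi'(t)=2\varphi'(2t)$, while concavity gives only $\varphi'(2t)\le\varphi'(t)$. So the KL inequality $\varphi'(s)\dist(0,\partial\Psi(X^k))\ge1$, with $s:=\Psi(X^k)-\Psi_*$, gives no lower bound on $\tilde\varphi'(Z_k)=2\varphi'(2Z_k)$. Requiring $\varphi'(2t)\ge c\,\varphi'(t)$ is a doubling condition of quasi-additivity type, which is exactly what you meant to avoid. The secant bound $\varphi(2s)-\varphi(s)\ge s\varphi'(2s)$ is again an \emph{upper} bound on $\varphi'(2s)$. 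The rescaling must go the other way: with $\tilde\varphi(t):=2\varphi(t/2)$ one has $\tilde\varphi'(Z_k)=\varphi'(Z_k/2)\ge\varphi'(s)$ whenever $Z_k\le 2s$, which is precisely case~(a).

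\emph{Case~(b).} This also fails as written. The bound $\varphi'(Z_k)\ge\varphi(Z_k)/Z_k$ together with subadditivity does not give $\varphi'(Z_k)\gtrsim(\sqrt{\Lambda_k}+S_k^{1/2})^{-1}$. For example, with $\varphi(t)=t^{0.9}$ and $Z_k\approx\Lambda_k$ you would need $\Lambda_k^{0.9}\gtrsim\Lambda_k^{1/2}$. Even granting the bound, the perturbation $S_k^{1/2}$ is not summable when $q>1/2$, and the remark $S_k\le S_k^q$ does not address this.

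\emph{A repair.} Use the single potential $\tilde\varphi(t):=2\varphi(t/2)+t^{1-\bar q}$ with $\bar q:=\max\{q,1/2\}$ and $\varepsilon\le 1$. Split case~(b) according to whether $Q_k:=\sum_\ell(\beta_\ell\lambda_{\ell,k}^2+\rho_\ell\lambda_{\ell,k-1}^2)$ or $S_k$ dominates. If $Q_k\ge S_k$, then $Z_k\le 4Q_k\lesssim\Lambda_k$ and $\tilde\varphi'(Z_k)\gtrsim\Lambda_k^{-1/2}$. If $Q_k<S_k$, then $Z_k<4S_k$ and $\tilde\varphi'(Z_k)\gtrsim S_k^{-\bar q}$. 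Set $A_k:=\tilde\varphi(Z_k)-\mathbb E_k[\tilde\varphi(Z_{k+1})]\ge\varepsilon_0\tilde\varphi'(Z_k)\Lambda_k$. Then AM--GM together with $\mathbb E_k[D_k]\le\sqrt{3m\Lambda_k}$ gives $\mathbb E_k[D_k]\lesssim A_k+S_k^q$ in every case. This makes your ``$\tfrac12$(same at $k-1$)'' recursion unnecessary.

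Once repaired, your route genuinely differs from the paper's. You keep the KL inequality for $\Psi$ itself at $X^k$, and in effect prove by hand the part of the separable-sum KL calculus that is needed. The paper instead lifts to $\widehat\Psi(x,u,v,\rho)=\Psi(x)+\sum_\ell\theta_{1,\ell}u_\ell^2+\sum_\ell\theta_{2,\ell}v_\ell^2+|\rho|^s$, with $s=1/(1-q)$ and extra coordinate $S_k^{1/s}$. Then $\widehat\Psi(Y^k)$ is exactly the Lyapunov value, and the $\rho$-derivative $sS_k^q$ produces exactly the $S_k^q$ tail. KL of $\widehat\Psi$ on $\Omega_*\times\{0\}^{2m+1}$ comes from the separable-sum rule, and the uniformized KL inequality is applied to $\widehat\Psi$ at $\Gamma_k=\widehat\Psi(Y^k)-\Psi_*$, so no case split is required. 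Your version is more self-contained; the paper's is shorter but relies on an external calculus rule.

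Two smaller points also need fixing.
\begin{itemize}
\item Your $\beta_\ell=a_{2,\ell}+\rho_\ell+\varepsilon_0$ can be negative when $a_{2,\ell}$ is very negative. That breaks $H_k\ge\underline\Psi$, which Robbins--Siegmund needs, and $Z_k\ge\Psi(X^k)-\Psi_*$, which the case split needs. Take $\beta_\ell\in(\max\{a_{2,\ell}+\rho_\ell,0\},a_{1,\ell})$ instead.
\item $H_k$ is not pathwise monotone, so $Z_k\ge0$ must be proved for all large $k$, not merely imposed on $A_k$. Otherwise $\tilde\varphi(Z_{k+1})$ may be undefined or negative in the Jensen and stopped-telescoping steps. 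As in the paper, $Z_k\ge 0$ follows from conditional Fatou, since $H_k$ is a supermartingale converging almost surely to $\Psi_*$.
\end{itemize}
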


\begin{proof}
	Set \(\lambda_{\ell,k}=0\) for \(k<0\). Choose
	\(\theta_{2,\ell}>\max\{a_{3,\ell},0\}\) such that
	\(a_{2,\ell}+\theta_{2,\ell}<a_{1,\ell}\), and then choose
	\[0<\theta_{1,\ell}\in(a_{2,\ell}+\theta_{2,\ell},a_{1,\ell}), \quad
	\ell=1,\ldots,m.\] Let \(s:=1/(1-q)>1\), so that
	\((s-1)/s=q\), and set \(\rho_k:=S_k^{1/s}\). Define
	\[
	\widehat\Psi(x,u,v,\rho)
	:=
	\Psi(x)+\sum_{\ell=1}^m\theta_{1,\ell}u_\ell^2
	+\sum_{\ell=1}^m\theta_{2,\ell}v_\ell^2+|\rho|^s,\]
	\[
	Y^k:=
	(X^k,\lambda_{1,k},\ldots,\lambda_{m,k},
	\lambda_{1,k-1},\ldots,\lambda_{m,k-1},\rho_k), \quad 
	\mathcal L_k:=\widehat\Psi(Y^k).
	\]
	Since \(S_k=\sigma_{k+1}+S_{k+1}\), it follows from \textup{(i)} that,
	for all \(k\ge k_0\), there holds
	\[
	\begin{aligned}
	\mathbb E_k[\mathcal L_{k+1}]
	\le\;&
	\mathcal L_k
	-\sum_{\ell=1}^m(a_{1,\ell}-\theta_{1,\ell})
	\mathbb E_k[\lambda_{\ell,k+1}^2]
	-\sum_{\ell=1}^m(\theta_{1,\ell}-a_{2,\ell}-\theta_{2,\ell})
	\lambda_{\ell,k}^2
	\\
	&\quad
	-\sum_{\ell=1}^m(\theta_{2,\ell}-a_{3,\ell})
	\lambda_{\ell,k-1}^2.
	\end{aligned}
	\]
	All coefficients on the right-hand side are positive. Since \(D_k^2\) is
	bounded by a constant multiple of
	\(\sum_{\ell=1}^m(\lambda_{\ell,k+1}^2+\lambda_{\ell,k}^2+
	\lambda_{\ell,k-1}^2)\), there exists \(c_0>0\) such that
	\begin{equation}\label{eq:L-descent}
		\mathbb E_k[\mathcal L_{k+1}]
		\le
		\mathcal L_k-c_0\mathbb E_k[D_k^2],
		\qquad k\ge k_0 .
	\end{equation}
	Applying the Robbins--Siegmund almost-supermartingale convergence theorem
	\cite[Theorem~1]{RobbinsSiegmund1971} to
	\(\mathcal L_k-\underline\Psi\), we obtain that
	\(\{\mathcal L_k:k\in\mathbb N\}\) converges almost surely and that
	\(\sum_k\lambda_{\ell,k}^2<+\infty\) almost surely for every
	\(\ell=1,\ldots,m\). Thus \(\lambda_{\ell,k}\to0\) almost surely. Since
	\(\sum_kS_k^q<+\infty\), we also have \(S_k\to0\). Therefore
	\[
	\Psi(X^k)
	=
	\mathcal L_k-\sum_{\ell=1}^m\theta_{1,\ell}\lambda_{\ell,k}^2
	-\sum_{\ell=1}^m\theta_{2,\ell}\lambda_{\ell,k-1}^2-S_k
	\]
	converges almost surely to a finite random variable, denoted by
	\(\Psi_\infty\). Moreover, \(D_k\to0\) almost surely, and hence \textup{(ii)}
	implies
	\(
	\dist(0,\partial\Psi(X^k))\to0
	\quad\hbox{almost surely}.
	\)
	Let \(\bar X\) be a cluster point of \(\{X^k:k\in\mathbb N\}\), say
	\(X^{k_j}\to\bar X\). By \textup{(iii)},
	\(\Psi(X^{k_j})\to\Psi(\bar X)\). Then, the closedness of the
	limiting-subdifferential graph
	\cite[Theorem~8.6]{RockafellarWets1998} yields
	\(0\in\partial\Psi(\bar X)\). This proves \textup{(I)}.
	
	We next prove \textup{(II)}. By \textup{(I)} and \textup{(iv)},
	\(\mathcal L_k\to\Psi_*\) almost surely. Moreover, by conditional Fatou's
	lemma and the supermartingale property \cite[Section~9.7(f),(i)]{Williams1991}, we obtain
	\[\mathcal L_k\ge\mathbb E_k[\liminf_j\mathcal L_j]=\Psi_* \quad \text{almost surely}.\]
	Set \(\Gamma_k:=\mathcal L_k-\Psi_*\ge0\) almost surely and
	\(
	\widehat\Omega_*:=\Omega_*\times\{0\}^{2m}\times\{0\}.
	\)
	By \textup{(v)}, \(\lambda_{\ell,k}\to0\) and \(\rho_k\to0\), we derive that
	\(\{Y^k:k\in\mathbb N\}\) is  bounded almost surely. Moreover, Item~\textup{(I)},
	conditions \textup{(iii)} and \textup{(v)} imply that any accumulation point
	of \(\{Y^k:k\in\mathbb N\}\) belongs to \(\widehat\Omega_*\) almost surely. Consequently, \( \dist(Y^k,\widehat\Omega_*)\to0\) almost surely.
	
	We next apply the KL property to \(\widehat\Psi\). Since \(\Psi\) satisfies
	the KL property on \(\Omega_*\), and the added functions
	\(r\mapsto\theta r^2\) and \(r\mapsto |r|^s\) satisfy the KL property at
	\(0\), the separable-sum rule for the generalized concave KL property
	\cite[Theorem~3.8]{WangWang2023} implies that \(\widehat\Psi\) satisfies the
	KL property at every point of \(\widehat\Omega_*\). Indeed, each desingularizing function
	\(\varphi_i\) involved in this rule may be replaced by
	\(\varphi_i(t)+\sqrt{t}\), which remains a desingularizing function and
	is strictly concave. By the uniformized KL property
	Lemma~\ref{lem:uniform-KL}, there exist \(\epsilon>0\), \(\delta>0\),
	and a concave desingularizing function \(\widehat\varphi\) such that
	\(
	\widehat\varphi'(\widehat\Psi(Y)-\Psi_*)
	\dist(0,\partial\widehat\Psi(Y))\ge1
	\)
	whenever
	\[
	Y\in\mathcal U:=
	\{Y:\dist(Y,\widehat\Omega_*)<\delta,\
	0\le\widehat\Psi(Y)-\Psi_*<\epsilon\}.
	\]
	We extend \(\widehat\varphi\) constantly to \([\epsilon,+\infty)\); this is
	only a notational convention.
	
	For all sufficiently large \(k\), \textup{(ii)} and the definition of
	\(\widehat\Psi\) imply that there exists \(\hat C>0\) such that
	\begin{equation}\label{eq:lifted-relative}
		\dist(0,\partial\widehat\Psi(Y^k))
		\le \hat C(D_k+S_k^q).
	\end{equation}
	On the event \({\Gamma_k=0}\), inequality~\eqref{eq:L-descent}, together with \(\Gamma_{k+1}\geq0\), implies
	\(
	\mathbb E_k[D_k^2]=0.
	\)
	We therefore consider the event \({\Gamma_k>0}\).	
	By conditional Jensen's inequality, the concavity of \(\widehat\varphi\), and
	\eqref{eq:L-descent}, we obtain
	\[
	\widehat\varphi(\Gamma_k)
	-\mathbb E_k[\widehat\varphi(\Gamma_{k+1})]
	\ge
	\widehat\varphi(\Gamma_k)
	-\widehat\varphi(\mathbb E_k[\Gamma_{k+1}])
	\ge
	c_0\widehat\varphi'(\Gamma_k)\mathbb E_k[D_k^2].
	\]
	On the event \({Y^k\in \mathcal U}\cap\{\Gamma_k>0\}\), the KL inequality and \eqref{eq:lifted-relative} yield
	\(1\le \hat C\widehat\varphi'(\Gamma_k)(D_k+S_k^q)\). Hence
	\(
	D_k\le 2\hat C\widehat\varphi'(\Gamma_k)D_k^2+S_k^q .
	\)
	Indeed, if \(D_k\ge S_k^q\), then \(D_k+S_k^q\le2D_k\), and hence
	\(D_k\le2\hat C\widehat\varphi'(\Gamma_k)D_k^2\); while if
	\(D_k<S_k^q\), the estimate is immediate. Taking conditional expectations and using the preceding
	estimate, we obtain, whenever \(Y^k\in\mathcal U\),
	\begin{equation}\label{eq:finite-length-step}
		\mathbb E_k[D_k]
		\le
		\frac{2\hat C}{c_0}
		\bigl(\widehat\varphi(\Gamma_k)
		-\mathbb E_k[\widehat\varphi(\Gamma_{k+1})]\bigr)
		+S_k^q .
	\end{equation}
	
	It remains to discuss when \(Y^k\in\mathcal U\). For each deterministic
	\(K\in\mathbb N\), define
	\(
	\tau_K:=\inf\{k\ge K:Y^k\notin\mathcal U\}\)  and \(\inf\emptyset:=+\infty .
	\)
	Since \(\dist(Y^k,\widehat\Omega_*)\to0\) and \(\Gamma_k\to0\) almost surely,
	one has
	\[
	\mathbb P\Bigl(\bigcup_{K=1}^{\infty}\{\tau_K=+\infty\}\Bigr)=1.
	\]
	Fix \(K\), and set\[
	I_k^K:=
	\begin{cases}
	1, & \text{if } k<\tau_K,\\
	0, & \text{otherwise}.
	\end{cases}
	\]
	Multiplying \eqref{eq:finite-length-step} by \(I_k^K\), taking expectations, and summing
	from \(k=K\) to \(K'\), we get, for some \(c>0\),
	\[
	\sum_{k=K}^{K'}\mathbb E[I_k^KD_k]
	\le
	c\sum_{k=K}^{K'}S_k^q
	+c\sum_{k=K}^{K'}\mathbb E[I_k^KA_k],
	\]
	where
	\(
	A_k:=
	\widehat\varphi(\Gamma_k)
	-\mathbb E_k[\widehat\varphi(\Gamma_{k+1})]\ge0.
	\)
	Since \(I_k^K\le I_{k-1}^K\) for \(k>K\), we have
	\[
	\sum_{k=K}^{K'}\mathbb E[I_k^KA_k]
	\le \mathbb E[\widehat\varphi(\Gamma_K)]<+\infty .
	\]
	Moreover, \(\sum_kS_k^q<+\infty\). Letting \(K'\to\infty\), we obtain
	\(
	\sum_{k=K}^{\infty}I_k^KD_k<+\infty
	\quad\hbox{almost surely}.
	\)
	On the event \(\{\tau_K=+\infty\}\), one has \(I_k^K=1\) for all \(k\ge K\),
	and hence
	\(
	\sum_{k=K}^{\infty}D_k<+\infty .
	\)
	Since
	\(
	\mathbb P\Bigl(\bigcup_{K=1}^{\infty}\{\tau_K=+\infty\}\Bigr)=1,
	\)
	we obtain \(\sum_{k=0}^{\infty}D_k<+\infty\) almost surely.
	
	Finally, assume the successive-error bound. Since
	\(\sum_kS_k^q<+\infty\) and \(\sum_kD_k<+\infty\) almost surely, we have
	\(
	\sum_{k=0}^{\infty}\|X^{k+1}-X^k\|<+\infty\) almost surely.
	Thus \(\{X^k: k\in\mathbb{N}\}\) is almost surely Cauchy and converges to some
	\(X^*\in S_\Psi\). We then get the desired \textup{(II)} by applying
	\textup{(I)}. This completes the proof.
\end{proof}

\begin{remark}\label{rem:det-limit}
The deterministic-limit condition in Item~(II)(iv) is automatic in the
deterministic setting. In the stochastic setting, it holds if there exist a
deterministic compact set $K\subseteq S_\Psi$ and a constant $\Psi_*\in\R$
such that $X^k\in K$ for all sufficiently large $k$ almost surely,
$\Psi(x)=\Psi_*$ for all $x\in\crit\Psi\cap K$. Indeed, Item~(I) ensures that
$\Psi(X^k)$ converges almost surely and that every cluster point is stationary.
The compactness of $K$ and condition~(iii) therefore imply that the limit equals
$\Psi_*$ almost surely. The common-critical-value condition holds, for example,
if $\Psi$ is convex, if $\Psi$ has a unique stationary point in $K$, or if, for
some $C,p>0$,
\[\Psi(x)-\inf_{y\in K}\Psi(y)\leq C\dist^p(0,\partial\Psi(x)), \quad x\in K.\]
Global gradient-dominance conditions of this type have been used in the
analysis of smooth stochastic nonconvex optimization; see, e.g.,
\cite{Fatkhullin2022,Karandikar2024}. The condition above is its
limiting-subdifferential counterpart for the present nonsmooth setting.
\end{remark}

The following two elementary lemmas provide the tail and discrete comparison
estimates used in the subsequent KL analysis. For Lemma~\ref{lem:tail}, the integral
comparison gives
\(
\sum_{j=k+1}^{\infty}\sigma_j=O(k^{1-r}),
\)
so that its \(q\)-th power is summable whenever \(q(r-1)>1\).
For Lemma~\ref{lem:comparison}, a standard discrete comparison argument distinguishes whether
the perturbation term \(t^{-r}\) or the descent term \(a_t^{\vartheta}\)
dominates. These two cases yield, respectively,
\(a_t=O(t^{-r/\vartheta})\) and
\(a_t=O(t^{-1/(\vartheta-1)})\).
We therefore omit the routine proofs.

\begin{lemma}\label{lem:tail}
Suppose that there exist constants $C_\sigma>0$, $r>2$, and an integer
$t_0\geq1$ such that $0\leq\sigma_t\leq C_\sigma t^{-r}$ for $t\geq t_0$.
Then
$\sum_{k=0}^\infty\left(\sum_{j=k+1}^\infty\sigma_j\right)^q<+\infty \ \text{for any }q\in\left(\frac1{r-1},1\right).$
\end{lemma}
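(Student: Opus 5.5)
The plan is a direct integral comparison followed by a $p$-series test.

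First, bound the tail $S_k:=\sum_{j=k+1}^\infty\sigma_j$ for $k\geq t_0$. Since $t\mapsto t^{-r}$ is decreasing, each term satisfies $j^{-r}\leq\int_{j-1}^{j}t^{-r}\,dt$. For $k\geq t_0$ this gives
\[
S_k\leq C_\sigma\sum_{j=k+1}^\infty j^{-r}\leq C_\sigma\int_k^\infty t^{-r}\,dt=\frac{C_\sigma}{r-1}\,k^{1-r}.
\]
Here $r>2>1$ ensures that the integral is finite.

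Second, handle the finitely many indices $0\leq k<t_0$. For $j\geq t_0$ the terms $\sigma_j$ are dominated by a summable sequence. The terms $\sigma_j$ with $j<t_0$ are finitely many finite nonnegative numbers. Hence $S_0<+\infty$, and $S_k\leq S_0$ for every $k$. So these indices contribute at most $t_0S_0^q<+\infty$ to the series.

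Third, raise the tail bound to the power $q$. For $k\geq t_0$,
\[
S_k^q\leq\Bigl(\frac{C_\sigma}{r-1}\Bigr)^q k^{-q(r-1)}.
\]
The assumption $q>1/(r-1)$ means exactly $q(r-1)>1$, so $\sum_{k\geq t_0}k^{-q(r-1)}$ is a convergent $p$-series. The range $(1/(r-1),1)$ is nonempty precisely because $r>2$. Together with the finite contribution from $k<t_0$, this gives $\sum_{k=0}^\infty S_k^q<+\infty$.

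There is no genuine obstacle in this argument. The only points needing care are two: the index shift in the integral comparison, which uses $k\geq t_0\geq1$ so that $k^{1-r}$ is well defined; and the separate treatment of the initial indices, where the bound $\sigma_t\leq C_\sigma t^{-r}$ is not assumed.
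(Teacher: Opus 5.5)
Your proof is correct and matches the paper's own sketch of this omitted routine proof: integral comparison gives $S_k=O(k^{1-r})$, and $q(r-1)>1$ makes $\sum_k S_k^q$ a convergent $p$-series. Your separate handling of the finitely many indices $k<t_0$ fills in a detail the paper leaves implicit; it uses the nonnegativity of all $\sigma_j$, which comes from the setting of Theorem~\ref{thm:abstract-KL} rather than from the lemma's hypothesis.
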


\begin{lemma}[A perturbed discrete comparison estimate]\label{lem:comparison}
Let $r>2$, $\beta_1>0$, $\beta_2\geq0$, and let
$\{a_t:t\in\N\}$ be a nonnegative sequence satisfying, for all sufficiently
large $t$,
\[\beta_1a_t^\vartheta\leq a_{t-1}-a_t+\beta_2t^{-r}.\]
If $\vartheta\in(1,2)$, then there exists a constant $C>0$ such that, for all sufficiently large $t$, 
\[a_t\leq Ct^{-1/(\vartheta-1)}+Ct^{-r/\vartheta}.\]
\end{lemma}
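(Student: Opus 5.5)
The plan is to prove Lemma~\ref{lem:comparison} by the standard potential-function argument used in KL rate analyses, splitting iterations according to whether the perturbation $\beta_2t^{-r}$ or the descent term $\beta_1a_t^\vartheta$ dominates. First, observe that $a_t$ is eventually bounded and can be assumed to tend to zero. Indeed, summing the inequality gives $\beta_1\sum_t a_t^\vartheta\le a_{t_0-1}+\beta_2\sum_t t^{-r}<\infty$, since $r>2$. Fix $t_1$ large, and for $t\ge t_1$ split into two cases.

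\emph{Case A:} $\beta_2t^{-r}\ge\frac{\beta_1}{2}a_t^\vartheta$. Then directly $a_t\le(2\beta_2/\beta_1)^{1/\vartheta}t^{-r/\vartheta}$, which is the second term of the bound.

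\emph{Case B:} $\beta_2t^{-r}<\frac{\beta_1}{2}a_t^\vartheta$. Then $\frac{\beta_1}{2}a_t^\vartheta\le a_{t-1}-a_t$, so in particular $a_t\le a_{t-1}$. Use $h(s)=s^{1-\vartheta}$ (decreasing, since $\vartheta>1$) and the classical estimate $a_t^{1-\vartheta}-a_{t-1}^{1-\vartheta}\ge c>0$. To get it, write
\[
a_t^{1-\vartheta}-a_{t-1}^{1-\vartheta}=(\vartheta-1)\int_{a_t}^{a_{t-1}}s^{-\vartheta}\,ds\ge(\vartheta-1)a_{t-1}^{-\vartheta}(a_{t-1}-a_t).
\]
If $a_{t-1}^\vartheta\le 2a_t^\vartheta$, this is at least $(\vartheta-1)\beta_1/4$. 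If instead $a_{t-1}^\vartheta>2a_t^\vartheta$, then $a_t^{1-\vartheta}\ge2^{(\vartheta-1)/\vartheta}a_{t-1}^{1-\vartheta}$, so the increment is at least $(2^{(\vartheta-1)/\vartheta}-1)a_{t-1}^{1-\vartheta}$. This is bounded below by a positive constant because $a_{t-1}$ is bounded.

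The main obstacle is combining the two cases, since the increments of $a_t^{1-\vartheta}$ need not be positive at Case A indices. I would handle this by a last-switch argument. Fix a large $t$ and let $s\le t$ be the last Case A index, or $s=t_1$ if none exists. On $(s,t]$ all indices are Case B, so
\[
a_t^{1-\vartheta}\ge a_s^{1-\vartheta}+c(t-s).
\]
If $s\ge t/2$, then $a_t\le a_s\le C s^{-r/\vartheta}\le C' t^{-r/\vartheta}$, using the monotonicity along Case B indices. If $s<t/2$, then $t-s\ge t/2$, so $a_t^{1-\vartheta}\ge ct/2$, and hence $a_t\le(ct/2)^{-1/(\vartheta-1)}$. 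Combining the two alternatives gives $a_t\le Ct^{-1/(\vartheta-1)}+Ct^{-r/\vartheta}$. The restriction $\vartheta<2$ is not essential here; it only fixes the regime in which these two rates are the relevant ones.
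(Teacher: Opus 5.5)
Your proof is correct and follows the route the paper sketches. The paper omits the proof, saying only that one distinguishes whether $\beta_2t^{-r}$ or $\beta_1a_t^\vartheta$ dominates, which gives $O(t^{-r/\vartheta})$ and $O(t^{-1/(\vartheta-1)})$ respectively; your last-switch argument supplies the step left implicit there, namely combining the two regimes. The one tacit point, that $a_s>0$ at the last Case~A index $s<t$ so $a_s^{1-\vartheta}$ is finite, holds because index $s+1$ is then Case~B, which forces $0\le a_{s+1}<a_s$.
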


Pointwise KL rates require a pathwise descent relation, and hence we only
record the deterministic consequence below.

\begin{theorem}[KL rates in the deterministic case]\label{thm:abstract-rates}
Assume that the deterministic counterparts of conditions~(i)--(vi) in
Theorem~\ref{thm:abstract-KL} hold. Assume further that there exists $C_S>0$
such that, for all sufficiently large $k$,
$\norm{x^{k+1}-x^k}\leq C_S(D_{k+1}+S_k^q)$, and that the descent relation in
Theorem~\ref{thm:abstract-KL}(i) holds pathwise, namely,
\begin{equation}\label{eq:pathwise-descent}
 \Psi(x^{k+1})\leq\Psi(x^k)
 -\sum_{\ell=1}^m a_{1,\ell}\lambda_{\ell,k+1}^2
 +\sum_{\ell=1}^m a_{2,\ell}\lambda_{\ell,k}^2
 +\sum_{\ell=1}^m a_{3,\ell}\lambda_{\ell,k-1}^2+\sigma_{k+1}.
\end{equation}
Let $x^*$ be the limit point obtained in Theorem~\ref{thm:abstract-KL}(II), and
assume that $\Psi$ has KL exponent $\theta\in[0,1)$ at $x^*$. Then the following
assertions hold.
\begin{enumerate}
\item[(i)] Suppose that $\theta\in[0,1/2]$ and
$0\leq\sigma_k\leq C_\sigma\varrho^k$ for all sufficiently large $k$, where
$C_\sigma\geq0$ and $\varrho\in(0,1)$. Then there exist constants $C>0$ and
$\rho\in(0,1)$ such that, for all sufficiently large $k$,
\[\abs{\Psi(x^k)-\Psi(x^*)}\leq C\rho^k, \quad D_k\leq C\rho^k, \quad \norm{x^k-x^*}\leq C\rho^k.\]
\item[(ii)] Suppose that $0\leq\sigma_k\leq C_\sigma k^{-r}$ for all sufficiently
large $k$, where $C_\sigma\geq0$ and $r>2$. 
Define
\[
\bar\theta:=
\begin{cases}
\max\left\{\theta,\dfrac{r}{2(r-1)}\right\}, & \text{if } C_\sigma>0,\\[1mm]
\theta, & \text{if } C_\sigma=0.
\end{cases}
\] If $\bar\theta\in(1/2,1)$, then there
exists a constant $C>0$ such that, for all sufficiently large $k$,
\[\abs{\Psi(x^k)-\Psi(x^*)}\leq Ck^{-1/(2\bar\theta-1)}+Ck^{-r/(2\bar\theta)}, \quad D_k\leq Ck^{-1/(4\bar\theta-2)}+Ck^{-r/(4\bar\theta)}.\]
Moreover, for some $C_*>0$, $\norm{x^k-x^*}\leq C_*k^{-\kappa}$, where
\[
\kappa:=
\begin{cases}
\dfrac{1-\theta}{2\theta-1},
& \text{if } C_\sigma=0,\\[2mm]
\displaystyle
\min\left\{
\dfrac{1-\bar\theta}{2\bar\theta-1},
\dfrac{r(1-\bar\theta)}{2\bar\theta},
\dfrac{r-2}{2}
\right\},
& \text{if } C_\sigma>0.
\end{cases}
\]
\end{enumerate}
\end{theorem}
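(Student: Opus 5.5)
The plan is to rerun the proof of Theorem~\ref{thm:abstract-KL}(II) pathwise. Conditional expectations become plain inequalities, so the lifted Lyapunov function yields a scalar recursion. That recursion is then closed with the KL exponent, and Lemma~\ref{lem:comparison} handles the sublinear case.

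\textbf{Step 1 (deterministic Lyapunov recursion).} As in the proof of Theorem~\ref{thm:abstract-KL}, set $\mathcal L_k:=\widehat\Psi(Y^k)$ and $\Gamma_k:=\mathcal L_k-\Psi(x^*)$. From \eqref{eq:pathwise-descent} we get the pathwise analogue of \eqref{eq:L-descent}:
\[\Gamma_{k+1}\le\Gamma_k-c_0D_k^2\quad\text{for }k\ge k_0,\qquad \Gamma_k\ge0,\qquad \Gamma_k\to0 .\]

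\textbf{Step 2 (KL inequality for $\Gamma_k$).} I do not want to use the exponent of the lifted function through $|\rho|^s$, since that would tie the rate to $q$. Instead I use the KL exponent $\theta$ of $\Psi$ at $x^*$ directly, together with relative error~(ii):
\[(\Psi(x^k)-\Psi(x^*))_+^{\theta}\le c\bigl(D_k+S_k^q\bigr).\]
The memory part of $\Gamma_k$ is $\sum_\ell(\theta_{1,\ell}\lambda_{\ell,k}^2+\theta_{2,\ell}\lambda_{\ell,k-1}^2)\le cD_k^2$, and this is at most $cD_k^{1/\theta'}$ for any exponent $\theta'\ge1/2$ once $D_k\le1$. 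The tail part is $S_k\le Ck^{1-r}$ by Lemma~\ref{lem:tail}'s integral comparison, or $S_k\le C\varrho^k$ in case~(i). Combining these pieces yields
\[\Gamma_k^{2\bar\theta}\le C\bigl(D_k^2+S_k^{2q}+S_k^{2\bar\theta}\bigr)\le C(\Gamma_{k-1}-\Gamma_k)+(\text{tail}),\]
where I shift the index by one so that $D_{k-1}^2\le(\Gamma_{k-1}-\Gamma_k)/c_0$ absorbs the relevant $\lambda$'s. If $\Gamma_k<0$ somewhere I treat it as in Theorem~\ref{thm:abstract-KL}, working with $(\cdot)_+$.

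Matching exponents is the delicate point, and I expect it to be the main obstacle. The tail term must be of order $t^{-r}$ for Lemma~\ref{lem:comparison}. Now $S_k^{2\bar\theta}\lesssim k^{-2\bar\theta(r-1)}$, which is $\le k^{-r}$ exactly when $\bar\theta\ge r/(2(r-1))$; this explains the $\max$ in the definition of $\bar\theta$. For the term $S_k^{2q}$, I choose $q$ near $1$, which is allowed since $q\in(1/(r-1),1)$ is free, so that it decays at least as fast as $k^{-r}$. This choice still needs checking.

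\textbf{Step 3 (rates for $\Gamma_k$ and $D_k$).} In case~(i) we have $\bar\theta\le1/2$, so $\Gamma_k\le C(\Gamma_{k-1}-\Gamma_k)+C\varrho^{k}$. This gives geometric decay of $\Gamma_k$, hence of $\Psi(x^k)-\Psi(x^*)$, and $D_k\le(\Gamma_k/c_0)^{1/2}$ is geometric as well. In case~(ii), Lemma~\ref{lem:comparison} with $\vartheta=2\bar\theta\in(1,2)$ gives $\Gamma_k\le Ck^{-1/(2\bar\theta-1)}+Ck^{-r/(2\bar\theta)}$. Taking square roots gives the stated bound on $D_k$. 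A two-sided bound on $|\Psi(x^k)-\Psi(x^*)|$ follows because $\Psi(x^k)-\Psi(x^*)$ differs from $\Gamma_k$ by $O(D_k^2+S_k)$.

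\textbf{Step 4 (iterate rates).} I bound $\norm{x^k-x^*}\le\sum_{j\ge k}\norm{x^{j+1}-x^j}\le C_S\sum_{j\ge k}(D_{j+1}+S_j^q)$. The sum $\sum_{j\ge k}D_j$ comes from the telescoped pathwise version of \eqref{eq:finite-length-step}, which gives $\sum_{j\ge k}D_j\le C\widehat\varphi(\Gamma_k)+C\sum_{j\ge k}S_j^q$ with $\widehat\varphi(s)=cs^{1-\bar\theta}$. Inserting the rate of $\Gamma_k$ produces the exponents $(1-\bar\theta)/(2\bar\theta-1)$ and $r(1-\bar\theta)/(2\bar\theta)$. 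The tail $\sum_{j\ge k}S_j^q\lesssim k^{1-q(r-1)}$ produces the third exponent. Taking $q$ as large as permitted makes this $(r-2)/2$-type term, and I expect to verify it against the precise allowed range of $q$ once the choice in Step~2 is fixed. When $C_\sigma=0$ all tail terms vanish, $\bar\theta=\theta$, and $\kappa=(1-\theta)/(2\theta-1)$.
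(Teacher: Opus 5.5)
Your argument is correct, but it reaches the key inequality $\Gamma_k^{\bar\theta}\lesssim D_k+(\text{tail})$ by a different route from the paper.

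The paper fixes $q=1/2$ in (i) and $q=r/[2(r-1)]$ in (ii) so that the tail coordinate $|\rho|^s$ of $\widehat\Psi$ has KL exponent $(s-1)/s=q$. The separable-sum rule for KL exponents (Li--Pong) then gives $\widehat\Psi$ the exponent $\bar\theta=\max\{\theta,q\}$ at $(x^*,0,\dots,0)$. Both $\Gamma_k^{\bar\theta}\le C(D_k+S_k^q)$ and the finite-length estimate used for the iterate rates then follow from the lifted KL inequality.

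You instead use only the KL inequality of $\Psi$. You bound the memory part of $\Gamma_k$ by hand ($\lesssim D_k^2\le D_k^{1/\bar\theta}$), and likewise the tail part ($S_k$), to get $\Gamma_k^{\bar\theta}\le C(D_k+S_k^q+S_k^{\bar\theta})$. This is more elementary and needs no exponent calculus. It also shows why $\bar\theta$ has the form it does: $\bar\theta$ is the threshold for $S_k^{2\bar\theta}=O(k^{-r})$, so $q$ is decoupled from the exponent. The paper's version is shorter and hands you the desingularizing inequality for Step~4 directly.

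A few details need repair.
\begin{itemize}
\item The check you flag works. $S_k^{2q}=O(k^{-r})$ as soon as $q\ge r/[2(r-1)]$, and this threshold is $<1$ since $r>2$. Take exactly this $q$ (the paper's choice) rather than ``$q$ near $1$'', since it places the weakest demand on the relative-error and successive-error hypotheses.
\item Your index shift is not justified as written. The relative error at $x^k$ yields $D_k$, which contains $\lambda_{\ell,k+1}$ and is not controlled by $\Gamma_{k-1}-\Gamma_k$. Instead derive $\Gamma_k^{2\bar\theta}\le C(\Gamma_k-\Gamma_{k+1})+\text{tail}_k$, and use $\Gamma_{k+1}\le\Gamma_k$ to reach the form of Lemma~\ref{lem:comparison} with $a_t=\Gamma_t$.
\item In (i) the working exponent must be $1/2$ rather than $\theta$, since $D_k^2\le D_k^{1/\bar\theta}$ requires $\bar\theta\ge1/2$.
\item In Step~4 you cannot cite \eqref{eq:finite-length-step} verbatim, because it rests on the lifted KL inequality you chose to avoid. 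Rerun its derivation with $T_k:=S_k^q+S_k^{\bar\theta}$ in place of $S_k^q$ to get $\sum_{j\ge k}D_j\le C\Gamma_k^{1-\bar\theta}+\sum_{j\ge k}T_j$. The extra sum $\sum_{j\ge k}S_j^{\bar\theta}\lesssim k^{1-\bar\theta(r-1)}\le k^{-(r-2)/2}$ is what produces the third exponent in $\kappa$; enlarging $q$ only improves $\sum_{j\ge k}S_j^q$.
\item When $C_\sigma=0$, Lemma~\ref{lem:comparison} with $\beta_2=0$ holds for every $r$, so the $k^{-r/(2\theta)}$ term can be dropped. This is needed to obtain $\kappa=(1-\theta)/(2\theta-1)$.
\end{itemize}
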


\begin{proof}
Set $\Psi_*:=\Psi(x^*)$. As in the proof of
Theorem~\ref{thm:abstract-KL}, choose $\theta_{1,\ell}$ and
$\theta_{2,\ell}$. For a chosen $q\in(0,1)$, define
$S_k,s,y^k,\widehat\Psi$ as in the proof of Theorem~\ref{thm:abstract-KL}. Let
$\Gamma_k:=\widehat\Psi(y^k)-\Psi_*$. Since $\{\Gamma_k:k\in\N\}$ is
nonincreasing for all sufficiently large $k$ and converges to zero, the definition of $\widehat\Psi$ implies
\[\abs{\Psi(x^k)-\Psi_*}\leq\Gamma_k+\widehat\Psi(y^k)-\Psi(x^k)
\leq\Gamma_k+C_\Psi D_k^2+S_k\] for some $C_\Psi>0$. The pathwise descent
relation~\eqref{eq:pathwise-descent} and the construction in the proof of
Theorem~\ref{thm:abstract-KL} yield $\Gamma_k-\Gamma_{k+1}\geq c_0D_k^2$ for
all sufficiently large $k$. Moreover, the relative-error condition leads to
\[\dist(0,\partial\widehat\Psi(y^k))\leq C_{\rm re}(D_k+S_k^q).\]
We prove (i) first. The geometric bound on $\sigma_k$ implies
$\sum_{k=0}^\infty S_k^q<+\infty$ for every $q\in(0,1)$, so we may take
$q=1/2$. Since $\theta\leq1/2$, the lifted function $\widehat\Psi$ has KL
exponent $1/2$ at $(x^*,0,\ldots,0)$
\cite[Theorem~3.3]{LiPong2018}. Hence, for all sufficiently large $k$,
$\Gamma_k^{1/2}\leq C_{\rm KL,1}(D_k+S_k^{1/2})$ for some
$C_{\rm KL,1}>0$. Together with the descent estimate, this yields
\[\Gamma_k\leq C_{\Gamma,1}(\Gamma_k-\Gamma_{k+1})+C_{\Gamma,2}S_k\]
for some $C_{\Gamma,1},C_{\Gamma,2}>0$. Since $S_k=0$ eventually if
$C_\sigma=0$, and $S_k\leq C_{S,\varrho}\varrho^k$ otherwise, the preceding
inequality implies \[\Gamma_{k+1}\leq a_\Gamma\Gamma_k+C_{\Gamma,\varrho}\varrho^k\]
for some $a_\Gamma\in(0,1)$ and $C_{\Gamma,\varrho}>0$. Hence
$\Gamma_k\leq C_\Gamma\rho^k$ for some $C_\Gamma>0$ and
$\rho\in(0,1)$. The estimates for $\Gamma_k$ and $D_k$, together with
$S_k=O(\varrho^k)$, yield the claimed estimate for
$\abs{\Psi(x^k)-\Psi_*}$. The estimate for $D_k$ follows from
$D_k^2\leq c_0^{-1}(\Gamma_k-\Gamma_{k+1})$, after enlarging $\rho$ if
necessary. Finally, the successive-error bound leads to
\[\norm{x^k-x^*}\leq\sum_{j=k}^\infty\norm{x^{j+1}-x^j}
\leq C_S\sum_{j=k}^\infty(D_{j+1}+S_j^{1/2})\leq C_x\rho^k\]
for some $C_x>0$.

We next prove (ii). When $C_\sigma>0$, choose
$q=r/[2(r-1)]$. By Lemma~\ref{lem:tail},
$S_k^q\leq C_{S,q}C_\sigma^qk^{-r/2}$ for some $C_{S,q}>0$, and
$\widehat\Psi$ has KL exponent $\bar\theta=\max\{\theta,r/[2(r-1)]\}$
\cite[Theorem~3.3]{LiPong2018}. When
$C_\sigma=0$, take $q=1/2$; then $S_k=0$ for all sufficiently large $k$, and
the KL exponent of $\widehat\Psi$ is $\bar\theta=\max\{\theta,1/2\}$. Thus, in both
cases, $\Gamma_k^{\bar\theta}\leq C_{\rm KL,2}(D_k+S_k^q)$ for some
$C_{\rm KL,2}>0$. Combining this with the descent estimate, we obtain
\[\Gamma_k-\Gamma_{k+1}
\geq c_\Gamma\Gamma_k^{2\bar\theta}
-C_{\rm pert}C_\sigma^{r/(r-1)}k^{-r}\]
for some $c_\Gamma,C_{\rm pert}>0$. Applying Lemma~\ref{lem:comparison} yields the
claimed rate for $\Gamma_k$. Moreover, we have 
\[\widehat\Psi(y^k)-\Psi(x^k)\leq C_\Psi D_k^2+S_k, \quad S_k=O(k^{1-r})=O(k^{-r/(2\bar\theta)}).\] Hence the same rate holds for
$\abs{\Psi(x^k)-\Psi(x^*)}$. The estimate for $D_k$ follows from
\[D_k^2\leq c_0^{-1}(\Gamma_k-\Gamma_{k+1})\leq c_0^{-1}\Gamma_k.\]
Finally, the KL finite-length estimate
$\sum_{j=k}^\infty D_{j+1}\leq C\Gamma_k^{1-\bar\theta}
+C\sum_{j=k}^\infty S_j^q$, together with the successive-error bound, yields
the claimed estimate for $\norm{x^k-x^*}$. This completes the proof.
\end{proof}
\subsection{Shifted objective on the tangent bundle}
Since the proximal step is defined in the tangent space, we use the shifted
objective
\begin{equation}\label{eq:shifted-objective}
 \Phi_0(x,\eta):=\Phi(x+\eta)+\ind_{\TM}(x,\eta),
 \qquad (x,\eta)\in\R^d\times\R^d.
\end{equation}
This choice directly incorporates the tangent-space proximal step into the
objective, while \((x,0)\) being critical for \(\Phi_0\) implies the
first-order criticality of \(x\) for the original problem.
The next estimate gives a conditional expected descent relation for $\Phi_0$.
\begin{proposition}[Conditional expected shifted objective descent]
\label{prop:shifted-descent}
Suppose Assumption~\ref{ass:geometry} holds. Let $\{x^k:k\in\N\}$ be generated
by Algorithm~\ref{alg:iRPMVR} in the full-step regime $\gamma_k\equiv1$, and
let $\wh\eta^k$ be defined by \eqref{eq:exact-step}. Let $\rho_1,\rho_2>0$ and
assume that $0<\bar\alpha<\min\{C_\alpha,1/\ell_f\}$, where $C_\alpha$ is from
Lemma~\ref{lem:step-size}. Set
\[a:=\frac{1}{2\alpha}-\frac{\ell_f}{2}-\frac{\rho_2}{2}, \quad c_\eta:=\frac{1}{4\alpha}+\ell_f^2\rho_1+2L_\Phi, \quad C_{\rm sh}:=\frac{C_1^2}{\alpha}+2L_\Phi C_1^2+\frac{C_1^2}{2\rho_1}+2C_0\ell_R,\]
where $C_0$ is from Lemma~\ref{lem:B-bounds} and $C_1$ is from
Lemma~\ref{lem:primal-error}. Then the following statements hold.
\begin{enumerate}
\item[(i)] For every $k\geq0$,
\[
\begin{aligned}
\Phi_0(x^{k+1},\wh\eta^{k+1})\leq{}&\Phi_0(x^k,\wh\eta^k)
+c_\eta\norm{\wh\eta^k}^2-a\norm{\wh\eta^{k+1}}^2 \\
&+\rho_1\norm{e^k}^2+\frac1{2\rho_2}\norm{e^{k+1}}^2
+C_{\rm sh}\Delta_k.
\end{aligned}
\]
\item[(ii)] If Assumption~\ref{ass:estimator} also holds and
$0<\bar\alpha<\min\{C_\alpha,1/(\ell_f+\rho_2)\}$, then, for any
$\omega\in(0,a)$ and every $k\geq0$ we have
\[
\begin{aligned}
&\E_{k+1}[\Phi_0(x^{k+1},\wh\eta^{k+1})] \\
&\quad\leq \Phi_0(x^k,\wh\eta^k)
-\omega\E_{k+1}[\norm{\wh\eta^{k+1}}^2]
+c_\eta\norm{\wh\eta^k}^2 \\
&\qquad-\left(\frac{a-\omega}{2}-\frac{\nu}{2\rho_2}\right)
\E_{k+1}[\norm{\eta^{k+1}}^2]
+\rho_1\norm{e^k}^2 \\
&\qquad+\frac{c_e-\mu}{2\rho_2}\mathcal E_{k+1}
-\frac{c_e}{2\rho_2}\E_{k+1}[\mathcal E_{k+2}] \\
&\qquad+C_{\rm sh}\Delta_k
+(a-\omega)C_1^2\Delta_{k+1}.
\end{aligned}
\]
\end{enumerate}
\end{proposition}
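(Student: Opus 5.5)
Part (i) compares two consecutive values of the shifted objective, and the exact step $\wh\eta^{k+1}$ is the bridge between them. Fix $k$ and recall that $\Phi_0(x,\eta)=\Phi(x+\eta)$ on $\TM$.

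\emph{Step 1: the new value via optimality of $\wh\eta^{k+1}$.} Since $\wh\eta^{k+1}$ minimizes $F_{k+1}$ over $T_{x^{k+1}}\M$ and $F_{k+1}$ is $1/\alpha$-strongly convex there, comparing with $v=0$ gives
\[
\ip{m^{k+1}}{\wh\eta^{k+1}}+\tfrac1\alpha\norm{\wh\eta^{k+1}}^2+R(x^{k+1}+\wh\eta^{k+1})\le R(x^{k+1}).
\]
For $F$ I use the Euclidean descent lemma with constant $\ell_f$ on $U$, which is valid because $\norm{\wh\eta^{k+1}}$ is small by Lemma~\ref{lem:step-size}:
\[
F(x^{k+1}+\wh\eta^{k+1})\le F(x^{k+1})+\ip{\grad F(x^{k+1})}{\wh\eta^{k+1}}+\tfrac{\ell_f}{2}\norm{\wh\eta^{k+1}}^2 .
\]
Here $\nabla F$ may be replaced by $\grad F$ because $\wh\eta^{k+1}$ is tangent. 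Adding the two inequalities, and writing $\grad F(x^{k+1})-m^{k+1}=-e^{k+1}$, I bound the cross term by $\frac{\rho_2}{2}\norm{\wh\eta^{k+1}}^2+\frac1{2\rho_2}\norm{e^{k+1}}^2$. This yields
\[
\Phi_0(x^{k+1},\wh\eta^{k+1})\le\Phi(x^{k+1})-\Bigl(\tfrac1\alpha-\tfrac{\ell_f}{2}-\tfrac{\rho_2}{2}\Bigr)\norm{\wh\eta^{k+1}}^2+\tfrac1{2\rho_2}\norm{e^{k+1}}^2 .
\]
The coefficient is at least $a$, with $\frac1{2\alpha}$ to spare.

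\emph{Step 2: the old value via the actual step.} With $\gamma_k=1$, Lemma~\ref{lem:retraction} gives $\Phi(x^{k+1})\le\Phi(x^k+\eta^k)+L_\Phi\norm{\eta^k}^2$. I then compare $\Phi(x^k+\eta^k)$ with $\Phi(x^k+\wh\eta^k)$.
\begin{itemize}
\item The $R$ part is handled through $F_k$: by \eqref{eq:basic-Fk} with $v=\wh\eta^k$, $F_k(\eta^k)\le F_k(\wh\eta^k)+2\ell_RC_0\Delta_k$. Expanding both sides leaves the terms $\ip{m^k}{\wh\eta^k-\eta^k}$ and $\frac1{2\alpha}(\norm{\wh\eta^k}^2-\norm{\eta^k}^2)$.
\item The $F$ part is $F(x^k+\eta^k)-F(x^k+\wh\eta^k)\le\ip{\nabla F(x^k+\wh\eta^k)}{\eta^k-\wh\eta^k}+\frac{\ell_f}2\norm{\eta^k-\wh\eta^k}^2$.
\item Combining these, the gradient terms become $\ip{\nabla F(x^k+\wh\eta^k)-m^k}{\eta^k-\wh\eta^k}$. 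Since $\eta^k-\wh\eta^k$ is tangent, this equals $\ip{\grad F(x^k)-m^k}{\cdot}$ plus a Lipschitz difference of size $\ell_f\norm{\wh\eta^k}$. I bound it by $\rho_1\norm{e^k}^2+\rho_1\ell_f^2\norm{\wh\eta^k}^2+\frac{1}{2\rho_1}\norm{\eta^k-\wh\eta^k}^2$, up to constant bookkeeping.
\item Finally, Lemma~\ref{lem:primal-error} gives $\norm{\eta^k-\wh\eta^k}^2\le C_1^2\Delta_k$, and $\norm{\eta^k}^2\le2\norm{\wh\eta^k}^2+2C_1^2\Delta_k$. The quadratic terms $\frac1{2\alpha}\norm{\wh\eta^k}^2$, $2L_\Phi\norm{\wh\eta^k}^2$ and $\ell_f^2\rho_1\norm{\wh\eta^k}^2$ then combine into $c_\eta\norm{\wh\eta^k}^2$ (using the spare $\frac1{2\alpha}$ from Step~1, or absorbing $\frac{\ell_f}{2}\le\frac1{2\alpha}$). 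The $\Delta_k$ terms combine into $C_{\rm sh}\Delta_k$.
\end{itemize}

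\emph{Main obstacle.} The hard part is matching the exact constants $c_\eta$ and $C_{\rm sh}$. I would try splitting $\frac1{2\alpha}\norm{\wh\eta^k}^2$ so that half is used to absorb $\norm{\eta^k-\wh\eta^k}$ cross terms, and let $C_1^2/\alpha$ collect the remainder.

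\emph{Part (ii).} Take $\E_{k+1}$ in (i). Everything indexed by $k$ is $\calF_{k+1}$-measurable, since $x^{k+1}$ and $\bar m^{k+1}$ depend only on batch $k$. Then:
\begin{enumerate}
\item Split $-a\norm{\wh\eta^{k+1}}^2=-\omega\norm{\wh\eta^{k+1}}^2-(a-\omega)\norm{\wh\eta^{k+1}}^2$.
\item Use $\norm{\wh\eta^{k+1}}^2\ge\frac12\norm{\eta^{k+1}}^2-C_1^2\Delta_{k+1}$ from Lemma~\ref{lem:primal-error}. This produces $-\frac{a-\omega}{2}\norm{\eta^{k+1}}^2+(a-\omega)C_1^2\Delta_{k+1}$.
\item Apply Assumption~\ref{ass:estimator} at index $k+1$: $\E_{k+1}\norm{e^{k+1}}^2\le(c_e-\mu)\mathcal E_{k+1}-c_e\E_{k+1}\mathcal E_{k+2}+\nu\E_{k+1}\norm{\eta^{k+1}}^2$, and multiply by $\frac1{2\rho_2}$.
\end{enumerate}
Collecting terms gives exactly the stated bound. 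The condition $\bar\alpha<1/(\ell_f+\rho_2)$ ensures $a>0$, so that $\omega\in(0,a)$ exists.
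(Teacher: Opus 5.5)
\textbf{Gap: the constant $c_\eta$ in Part~(i).}
Your architecture is the same as the paper's. Step~1 is its final inequality; the stronger coefficient $\frac1\alpha-\frac{\ell_f}2-\frac{\rho_2}2$ you get from strong convexity is harmless. Step~2 is its comparison of $\Phi(x^k+\eta^k)$ with $\Phi(x^k+\wh\eta^k)$ through \eqref{eq:basic-Fk}, the descent lemma, and Young's inequality with $\rho_1$. Your Part~(ii) is exactly the paper's argument.

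The problem is the coefficient of $\norm{\wh\eta^k}^2$. As you describe it, the full $\frac1{2\alpha}\norm{\wh\eta^k}^2$ enters that coefficient. This gives $\frac1{2\alpha}+\ell_f^2\rho_1+2L_\Phi$ instead of $c_\eta=\frac1{4\alpha}+\ell_f^2\rho_1+2L_\Phi$. Your proposed remedy, borrowing the spare $\frac1{2\alpha}$ from Step~1, cannot work. That spare multiplies $\norm{\wh\eta^{k+1}}^2$, the new step, and a one-step inequality gives no way to trade it against $\norm{\wh\eta^k}^2$. Splitting the positive term $\frac1{2\alpha}\norm{\wh\eta^k}^2$, as in your obstacle paragraph, does not shrink it either.

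This is not cosmetic. Lemma~\ref{lem:subsequence} needs $d_\alpha=a-c_\eta>0$, and with $\frac1{2\alpha}$ in $c_\eta$ and the stated $a$ this difference is negative.

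\textbf{The missing step.}
Keep the negative term $-\frac1{2\alpha}\norm{\eta^k}^2$ that your expansion of $F_k$ already produced. Since $\norm{\wh\eta^k}^2\le2\norm{\eta^k}^2+2\norm{\eta^k-\wh\eta^k}^2$, Lemma~\ref{lem:primal-error} gives
\[
\frac1{2\alpha}\bigl(\norm{\wh\eta^k}^2-\norm{\eta^k}^2\bigr)\le\frac1{4\alpha}\norm{\wh\eta^k}^2+\frac{C_1^2}{2\alpha}\Delta_k .
\]
The paper states this with the looser $\frac{C_1^2}{\alpha}\Delta_k$, which is where that term in $C_{\rm sh}$ comes from.

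The term $\frac{\ell_f}2\norm{\eta^k-\wh\eta^k}^2$ from the descent lemma is then harmless. The paper cancels it against the $-\frac1{2\alpha}\norm{\eta^k-\wh\eta^k}^2$ that you discarded from \eqref{eq:basic-Fk}, using $\alpha<1/\ell_f$. Alternatively, bound it by $\frac{\ell_f}2C_1^2\Delta_k\le\frac{C_1^2}{2\alpha}\Delta_k$; combined with the sharp estimate above, this still lands exactly on $C_{\rm sh}$.

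With this correction Part~(i) holds as stated. Part~(ii) then follows as you wrote, provided you note that $\Delta_{k+1}$ is $\calF_{k+1}$-measurable, since it is chosen before $\mathcal{B}_{k+1}$ is sampled. That is what lets $(a-\omega)C_1^2\Delta_{k+1}$ sit outside $\E_{k+1}$.
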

\begin{proof}
Fix $k\geq0$ and write \[x:=x^k, \quad  \eta:=\eta^k, \quad \wh\eta:=\wh\eta^k, \quad m:=m^k, \quad e:=e^k.\]
By
\eqref{eq:basic-Fk} with $v=\wh\eta$, the Lipschitz continuity of $\nabla F$,
and $\alpha<1/\ell_f$, we have
\begin{equation*}
\Phi(x+\eta)\leq\Phi(x+\wh\eta)
+\ip{\nabla F(x+\wh\eta)-m}{\eta-\wh\eta}
+\frac1{2\alpha}\bigl(\norm{\wh\eta}^2-\norm\eta^2\bigr)
+2C_0\ell_R\Delta_k.
\end{equation*}
Since $\eta-\wh\eta\in T_x\M$ and $e=m-\grad F(x)$, Young's inequality and
Lemma~\ref{lem:primal-error} yield
\[\ip{\nabla F(x+\wh\eta)-m}{\eta-\wh\eta}
\leq\ell_f^2\rho_1\norm{\wh\eta}^2+\rho_1\norm e^2
+\frac{C_1^2}{2\rho_1}\Delta_k,\] and \( \norm{\wh\eta}^2-\norm\eta^2\leq\tfrac12\norm{\wh\eta}^2+2C_1^2\Delta_k.\)
Using Lemma~\ref{lem:retraction}, we obtain
\(\Phi(x^{k+1})\leq\Phi(x^k+\eta^k)+L_\Phi\norm{\eta^k}^2.\) On the other
hand, the optimality of $\wh\eta^{k+1}$ in \eqref{eq:exact-step}, the Lipschitz
continuity of $\nabla F$, and Young's inequality yield
\[\Phi(x^{k+1}+\wh\eta^{k+1}) \leq\Phi(x^{k+1})-a\norm{\wh\eta^{k+1}}^2 +\frac1{2\rho_2}\norm{e^{k+1}}^2.\]
Combining the above inequalities leads to Item~(i).

We next prove Item~(ii). Under the stepsize bound in Item~(ii), we have $a>0$.
Fix $\omega\in(0,a)$. By Lemma~\ref{lem:primal-error},
$\norm{\wh\eta^{k+1}}^2\geq\tfrac12\norm{\eta^{k+1}}^2-C_1^2\Delta_{k+1}$.
Hence
\[-a\norm{\wh\eta^{k+1}}^2 \leq-\omega\norm{\wh\eta^{k+1}}^2 -\frac{a-\omega}{2}\norm{\eta^{k+1}}^2 +(a-\omega)C_1^2\Delta_{k+1}.\]
Taking conditional expectation $\E_{k+1}[\cdot]$ in Item~(i), using the above
estimate and the $\mathcal F_{k+1}$-measurability of $\Delta_{k+1}$, and applying
Assumption~\ref{ass:estimator} at the index $k+1$, we obtain Item~(ii).
\end{proof}

We next establish the relative-error estimate for the shifted objective
$\Phi_0$ required by the abstract KL principle.

\begin{proposition}[Relative error for the shifted objective]
\label{prop:shifted-relative}
Suppose Assumption~\ref{ass:geometry} holds. Let $\Phi_0$ be defined by
\eqref{eq:shifted-objective}, and let $\{x^k:k\in\N\}$ be generated by Algorithm~\ref{alg:iRPMVR} in the full-step regime $\gamma_k\equiv1$, with
$\wh\eta^k$ defined by \eqref{eq:exact-step}. Then there exists a constant
$c_\Phi>0$, independent of $k$ and $n$, such that
\[\dist(0,\partial\Phi_0(x^k,\wh\eta^k)) \leq
c_\Phi(\norm{\wh\eta^k}+\norm{e^k}), \quad k\in\N.\]
\end{proposition}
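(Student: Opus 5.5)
We bound $\dist(0,\partial\Phi_0(x^k,\wh\eta^k))$ by exhibiting one explicit element of the subdifferential built from the optimality condition of the exact tangent subproblem \eqref{eq:exact-step}.

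Write $x:=x^k$, $\wh\eta:=\wh\eta^k$, $m:=m^k$. Since $\Phi$ is the sum of the $C^1$ function $F$ and the finite convex function $R$, and $\ind_{\TM}$ is the indicator of a smooth embedded submanifold, the sum rule for limiting subdifferentials gives
\[
\partial\Phi_0(x,\wh\eta)\supseteq\bigl\{(g,g)+(u,\zeta):\ g\in\nabla F(x+\wh\eta)+\partial R(x+\wh\eta),\ (u,\zeta)\in N_{\TM}(x,\wh\eta)\bigr\}.
\]
This holds because the composition $(x,\eta)\mapsto\Phi(x+\eta)$ has subdifferential $\{(g,g)\}$ (linear surjective map), and the smooth-plus-Lipschitz-convex plus indicator sum rule applies: $R$ is Lipschitz, so the qualification condition is trivial.

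\textbf{Optimality condition.} Because $R$ is finite convex and $T_x\M$ is a subspace, optimality of $\wh\eta$ in \eqref{eq:exact-step} gives some $z\in\partial R(x+\wh\eta)$ and $\zeta\in N_x\M$ with
\[
m+\tfrac1\alpha\wh\eta+z+\zeta=0 .
\]
Note $\norm z\le\ell_R$ and $\norm m\le M_u$, so $\norm\zeta\le M_u+\ell_R+\norm{\wh\eta}/\alpha$, which is uniformly bounded by Lemma~\ref{lem:step-size}.

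\textbf{Lifting the normal vector.} Apply Lemma~\ref{lem:normal-lift} at $(x,\wh\eta)$ with the normal vector $-\zeta$. This yields $u$ with $(u,-\zeta)\in N_{\TM}(x,\wh\eta)$ and $\norm{u+\zeta}\le C_{\rm tb}\norm{\wh\eta}\norm\zeta$. Set $g:=\nabla F(x+\wh\eta)+z$. Then $(g+u,\,g-\zeta)\in\partial\Phi_0(x,\wh\eta)$. We estimate each component:
\[
g-\zeta=\nabla F(x+\wh\eta)+z-\zeta .
\]
This is awkward because of the sign of $\zeta$. The fix is to apply Lemma~\ref{lem:normal-lift} with $+\zeta$ (normal cones are cones, but the sign must match), i.e.\ take $(u,\zeta)\in N_{\TM}(x,\wh\eta)$. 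The second component then becomes
\[
g+\zeta=\nabla F(x+\wh\eta)-m-\tfrac1\alpha\wh\eta=\bigl(\nabla F(x+\wh\eta)-\nabla F(x)\bigr)+\bigl(\nabla F(x)-\grad F(x)\bigr)-e-\tfrac1\alpha\wh\eta .
\]
The term $\nabla F(x)-\grad F(x)=P_{N_x\M}\nabla F(x)$ is not small. This is the main obstacle.

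\textbf{Resolving the obstacle.} The normal part of $\nabla F(x)$ must be absorbed into $\zeta$. So redefine $\zeta':=\zeta-P_{N_x\M}\nabla F(x)\in N_x\M$; it is still uniformly bounded, by $M_f$ plus the previous bound. Then
\[
\nabla F(x)+z+\zeta'=\grad F(x)-m-\tfrac1\alpha\wh\eta=-e-\tfrac1\alpha\wh\eta .
\]
Lift $\zeta'$ via Lemma~\ref{lem:normal-lift} to $(u,\zeta')\in N_{\TM}(x,\wh\eta)$. The element $(g+u,g+\zeta')$ then has second component
\[
\bigl(\nabla F(x+\wh\eta)-\nabla F(x)\bigr)-e-\tfrac1\alpha\wh\eta,
\]
whose norm is at most $(\ell_f+1/\alpha)\norm{\wh\eta}+\norm e$ (using $x+\wh\eta\in U$ for small steps). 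The first component is
\[
g+u=(g+\zeta')+(u-\zeta'),
\]
bounded by the same quantity plus $C_{\rm tb}\norm{\wh\eta}\norm{\zeta'}$, where $\norm{\zeta'}$ is uniformly bounded. Summing the two components gives the claim, with $c_\Phi$ depending only on $\ell_f$, $\alpha$ (bounded above via $\bar\alpha$; the factor $1/\alpha$ is fixed), $C_{\rm tb}$, $M_f$, $M_u$, $\ell_R$, and hence independent of $k$ and $n$.
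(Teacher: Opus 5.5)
Your final argument is correct and is essentially the paper's proof. You absorb the normal component $\nabla F(x^k)-\grad F(x^k)$ into the multiplier from the optimality condition of the tangent subproblem, lift it via Lemma~\ref{lem:normal-lift}, and bound both components of the resulting subgradient using the Lipschitz continuity of $\nabla F$ together with $\norm{u-\zeta'}\le C_{\rm tb}\norm{\wh\eta^k}\norm{\zeta'}$ and the uniform bound on $\norm{\zeta'}$. The exploratory detours about the sign of $\zeta$ can simply be deleted from a final write-up.
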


\begin{proof}
Fix $k\in\N$, and write $x:=x^k$, $\eta:=\wh\eta^k$,
$m:=m^k$, and $e:=e^k=m^k-\grad F(x^k)$. Since $\eta$ is the exact minimizer
of \eqref{eq:exact-step}, there exist $\xi\in\partial R(x+\eta)$ and
$\zeta'\in N_x\M$ such that
$m+\alpha^{-1}\eta+\xi+\zeta'=0$. Since
$\grad F(x)-\nabla F(x)\in N_x\M$, there exists $\zeta\in N_x\M$ such that
\begin{equation}\label{eq:shifted-optimality}
 \nabla F(x)+e+\alpha^{-1}\eta+\xi+\zeta=0.
\end{equation}
By Lemma~\ref{lem:normal-lift}, there exists $u\in\R^d$ such that
$(u,\zeta)\in N_{\TM}(x,\eta)$ and
$\norm{u-\zeta}\leq C_{\rm tb}\norm\eta\norm\zeta$. From
\eqref{eq:shifted-optimality}, the boundedness of $\nabla F$ on the compact
manifold $\M$, and the Lipschitz continuity of $R$, we have
$\norm\zeta\leq C(1+\norm e+\norm\eta)$. Together with
Lemma~\ref{lem:step-size} and \eqref{eq:m-bounds}, this implies
\begin{equation}\label{eq:u-zeta}
 \norm{u-\zeta}\leq C(\norm\eta+\norm e),
\end{equation}
where $C>0$ is independent of $k$ and $n$.

Set $v_1:=\nabla F(x+\eta)+\xi+u$ and
$v_2:=\nabla F(x+\eta)+\xi+\zeta$. By the subdifferential sum rule applied to
\eqref{eq:shifted-objective}, we have $(v_1,v_2)\in\partial\Phi_0(x,\eta)$.
Moreover, by \eqref{eq:shifted-optimality}, we have
\[v_2=\nabla F(x+\eta)-\nabla F(x)-e-\alpha^{-1}\eta, \quad v_1=v_2+(u-\zeta).\] Using the Lipschitz continuity of $\nabla F$ and
\eqref{eq:u-zeta}, we obtain the desired result.
\end{proof}

\subsection{Asymptotic sequential convergence}
We now combine the shifted descent estimate and the relative error bound to
establish the sequential convergence of iRPMVR. We first introduce an implementable
rule for the inner tolerance $\Delta_k$, and then show that every cluster point
generated under this rule is critical for the original problem. This prepares
the application of the abstract KL theorem.

\begin{assumption}[Implementable inner tolerance for sequential convergence]
\label{ass:sequential-tolerance}
There exist constants $\kappa_1,\kappa_2\geq0$, $k_0\in\N$ and $r>2$ such
that
\[\Delta_k\leq\min\left\{\bar\Delta, \max\{\kappa_1\norm{\eta^{k-1}}^2,\kappa_2(k-k_0)^{-r}\}\right\}, \quad k\geq k_0+1.\]
\end{assumption}

Under Assumption~\ref{ass:sequential-tolerance}, we have 
\[\Delta_k\leq\kappa_1\norm{\eta^{k-1}}^2+\kappa_2(k-k_0)^{-r}, \quad k\geq k_0+1.\] In
particular, the deterministic tail $(k-k_0)^{-r}$ is summable, and since
$r>2$, it also satisfies the tail condition required in
Theorem~\ref{thm:abstract-KL}.

We first derive subsequential stationarity from the conditional expected
descent estimate.

\begin{lemma}[Subsequential stationarity]\label{lem:subsequence}
Suppose Assumptions~\ref{ass:geometry}, \ref{ass:estimator}, and
\ref{ass:sequential-tolerance} hold. Let $\Phi_0$ be defined by
\eqref{eq:shifted-objective}, and let $\{x^k:k\in\N\}$ be generated by
Algorithm~\ref{alg:iRPMVR} in the full-step regime $\gamma_k\equiv1$, with
$\wh\eta^k$ defined by
\eqref{eq:exact-step}. Let $\rho_1,\rho_2>0$, and let $a,c_\eta,C_{\rm sh}$ be
the constants in Proposition~\ref{prop:shifted-descent}. Choose
$\chi_e>\rho_1$. Assume that
\begin{equation*}
0<\bar\alpha<\min\{C_\alpha,1/(\ell_f+\rho_2),
1/(2\ell_f+2\rho_2+4\ell_f^2\rho_1+8L_\Phi+8\nu/\rho_2+16\chi_e\nu)\},
\end{equation*}
where $C_\alpha$ is from Lemma~\ref{lem:step-size}. Set $d_\alpha:=a-c_\eta$,
and choose $\kappa_1\geq0$ such that
\[0\leq\kappa_1<
\frac{d_\alpha/4-\nu/(2\rho_2)-\chi_e\nu}
{(d_\alpha C_1^2)/2+C_{\rm sh}},\] where $C_1$ is from
Lemma~\ref{lem:primal-error}. Then,
$\{\Phi_0(x^k,\wh\eta^k):k\in\N\}$ converges almost surely to a finite
random variable $\Phi_\infty$. Moreover, almost surely,
$\wh\eta^k\to0, \eta^k\to0, e^k\to0, \mathcal E_k\to0.$
Consequently, every accumulation point of $\{x^k:k\in\N\}$ is a critical point
of problem~\eqref{eq:model} almost surely.
\end{lemma}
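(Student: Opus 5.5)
The plan is to build a Lyapunov (augmented) function from Proposition~\ref{prop:shifted-descent}(ii) and apply the Robbins--Siegmund theorem. The descent there carries a positive memory term $c_\eta\norm{\wh\eta^k}^2$, a term $\rho_1\norm{e^k}^2$, the estimator memory $\mathcal E$, and tolerance terms $C_{\rm sh}\Delta_k+(a-\omega)C_1^2\Delta_{k+1}$. I would define, for $k\ge k_0+1$,
\[
\mathcal L_{k}:=\Phi_0(x^{k},\wh\eta^{k})+c_\eta\norm{\wh\eta^{k}}^2+\Bigl(\frac{c_e}{2\rho_2}+\chi_e c_e\Bigr)\mathcal E_{k+1}+\beta\norm{\eta^{k}}^2+T_k,
\]
where $T_k:=C\sum_{j\ge k}(j-k_0)^{-r}$ is a deterministic tail and $\beta$ is a multiple of $\kappa_1$. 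Here $\omega$ is chosen so that $\omega-c_\eta$ is a fixed fraction of $d_\alpha=a-c_\eta$; for instance $\omega=c_\eta+d_\alpha/2$, which gives $(a-\omega)/2=d_\alpha/4$. The role of $\chi_e$ is to handle $\rho_1\norm{e^k}^2$. I would not keep $e^k$ pathwise. Instead I would apply Assumption~\ref{ass:estimator} at index $k$ with weight $\chi_e$, taking the expectation $\E_k$ of the step-$(k+1)$ inequality. This gives
\[
\chi_e\E_k\norm{e^k}^2\le \chi_e c_e(\mathcal E_k-\E_k\mathcal E_{k+1})-\chi_e\mu\mathcal E_k+\chi_e\nu\E_k\norm{\eta^k}^2 .
\]
Since $\chi_e>\rho_1$, this leaves a strictly negative multiple $(\chi_e-\rho_1)\E_k\norm{e^k}^2$ and costs $\chi_e\nu\norm{\eta^k}^2$. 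This is exactly why the constraint on $\kappa_1$ contains $\chi_e\nu$.

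The key steps, in order, are as follows.

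\textbf{Step 1.} Take $\E_k$ of Proposition~\ref{prop:shifted-descent}(ii), using the tower property, and add the weighted estimator inequality above.

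\textbf{Step 2.} Bound the tolerances with Assumption~\ref{ass:sequential-tolerance}. This gives $\Delta_{k+1}\le\kappa_1\norm{\eta^k}^2+\kappa_2(k+1-k_0)^{-r}$ and $\Delta_k\le\kappa_1\norm{\eta^{k-1}}^2+\kappa_2(k-k_0)^{-r}$. The $\eta$-coefficient produced is $\kappa_1((a-\omega)C_1^2+C_{\rm sh})$, and $(a-\omega)C_1^2\le d_\alpha C_1^2/2$. The smallness condition on $\kappa_1$ then ensures that the negative coefficient $d_\alpha/4-\nu/(2\rho_2)-\chi_e\nu$ in front of $\norm{\eta}^2$ dominates, so a strictly negative multiple survives. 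The delayed term $\kappa_1C_{\rm sh}\norm{\eta^{k-1}}^2$ is absorbed by the $\beta\norm{\eta^k}^2$ component of $\mathcal L$, which is a shift of index. The deterministic tails are absorbed into $T_k$, and the stepsize bound on $\bar\alpha$ guarantees $d_\alpha>0$ with room for these terms.

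\textbf{Step 3.} The outcome should be
\[
\E_k\mathcal L_{k+1}\le\mathcal L_k-c\,\E_k\bigl[\norm{\wh\eta^{k+1}}^2+\norm{\eta^{k+1}}^2+\norm{e^k}^2\bigr]-c\,\mathcal E_{k+1}-c\,\mathcal E_k
\]
with some $c>0$; note that $\mu\mathcal E$ gives the decay in $\mathcal E$. Since $\Phi_0(x^k,\wh\eta^k)=\Phi(x^k+\wh\eta^k)$ is bounded below on the compact set $\M+\barB(0,\delta)$, the Robbins--Siegmund theorem yields that $\mathcal L_k$ converges almost surely. It also yields that the series $\sum\norm{\wh\eta^k}^2$, $\sum\norm{\eta^k}^2$, $\sum\norm{e^k}^2$ and $\sum\mathcal E_k$ are finite almost surely. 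Hence all four quantities tend to $0$, and so does $T_k$. Subtracting the vanishing terms from $\mathcal L_k$ gives convergence of $\Phi_0(x^k,\wh\eta^k)$ to a finite $\Phi_\infty$. Integrability follows from the bounded iterates and $\E\mathcal E_0<\infty$, propagated through the assumption.

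\textbf{Step 4 (criticality).} Take a subsequence $x^{k_j}\to\bar x\in\M$, which exists by compactness. Proposition~\ref{prop:shifted-relative} gives $\dist(0,\partial\Phi_0(x^{k_j},\wh\eta^{k_j}))\to0$, while $(x^{k_j},\wh\eta^{k_j})\to(\bar x,0)$ and $\Phi_0$ values converge by continuity of $\Phi$ on $\TM$. Closedness of the limiting subdifferential then gives $0\in\partial\Phi_0(\bar x,0)$. To avoid the lifted subdifferential, a more direct route is to pass to the limit in the optimality identity \eqref{eq:shifted-optimality}. Since $\xi^{k_j}$ is bounded by $\ell_R$ and $\zeta^{k_j}$ is bounded, along a further subsequence $\xi\to\bar\xi\in\partial R(\bar x)$ (by outer semicontinuity) and $\zeta\to\bar\zeta\in N_{\bar x}\M$ (the normal bundle is closed). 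Using $e^{k_j},\wh\eta^{k_j}\to0$, this yields $0\in\nabla F(\bar x)+\partial R(\bar x)+N_{\bar x}\M$.

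The main obstacle is the bookkeeping in Steps 1--2. The descent in Proposition~\ref{prop:shifted-descent}(ii) is conditioned on $\calF_{k+1}$, while the error term $\rho_1\norm{e^k}^2$ is only controllable in $\E_k$-expectation. In addition, the tolerance couples consecutive $\eta$'s, so the Lyapunov weights must be matched precisely to the threshold on $\kappa_1$. I would first verify the coefficient inequality with $\omega=c_\eta+d_\alpha/2$, and adjust $\beta$ if the delayed $\eta^{k-1}$ term does not close.
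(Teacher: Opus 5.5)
There is a genuine gap in Step~3. Your inequality $\E_k\mathcal L_{k+1}\le\mathcal L_k-\cdots$ does not follow from Steps~1--2, and the Robbins--Siegmund step breaks as a result. Apart from $T_k$, every term of your $\mathcal L_k$ depends on the batch $\mathcal B_k$: $\Phi_0(x^k,\wh\eta^k)$, $\norm{\wh\eta^k}^2$, $\norm{\eta^k}^2$ and $\mathcal E_{k+1}$. These are $\calF_{k+1}$-measurable but not $\calF_k$-measurable. So taking $\E_k$ of Proposition~\ref{prop:shifted-descent}(ii) gives $\E_k[\mathcal L_{k+1}]\le\E_k[\mathcal L_k]-\cdots$, and $\E_k[\mathcal L_k]\neq\mathcal L_k$ in general.

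Robbins--Siegmund therefore applies to $\mathcal L_k$ in neither filtration. With respect to $\{\calF_k\}$ the process is not adapted. With respect to $\{\calF_{k+1}\}$ your relation is not conditioned on $\calF_{k+1}$. Hence the almost-sure convergence of $\mathcal L_k$, and with it that of $\Phi_0(x^k,\wh\eta^k)$, is not established.

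There are also index slips in the Lyapunov function. Using Assumption~\ref{ass:estimator} at index $k$ forces the term $\chi_ec_e\mathcal E_k$ into $\mathcal L_k$, not $\chi_ec_e\mathcal E_{k+1}$. You also need weights on both $\norm{\eta^k}^2$ and $\norm{\eta^{k-1}}^2$, since $\chi_e\nu\E_k\norm{\eta^k}^2$ and $C_{\rm sh}\kappa_1\norm{\eta^{k-1}}^2$ are both delayed terms. With these corrections the coefficient condition still reduces to the stated bound on $\kappa_1$.

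The obstacle you identified is not real. The paper stays in the shifted filtration $\mathcal G_k:=\calF_{k+1}$, where $e^k$ is measurable, and uses $\mathcal L_k:=\Phi_0(x^k,\wh\eta^k)+\chi_e\norm{e^k}^2$. It adds $\chi_e\E_{k+1}[\norm{e^{k+1}}^2]$ to both sides of Proposition~\ref{prop:shifted-descent}(ii) and bounds it by Assumption~\ref{ass:estimator} at index $k+1$. Then $\rho_1\norm{e^k}^2$ is dominated pathwise by the $\chi_e\norm{e^k}^2$ already in $\mathcal L_k$, leaving $-(\chi_e-\rho_1)\norm{e^k}^2$. The cost $\chi_e\nu\E_{k+1}[\norm{\eta^{k+1}}^2]$ falls on the same index as the negative $\eta$-term. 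The result is a true almost-supermartingale relation with respect to $\{\mathcal G_k\}$. The paper feeds it into Theorem~\ref{thm:abstract-KL}(I) with memory terms $\norm{\wh\eta^k}$, $\norm{\eta^k}$, $\sqrt{\mathcal E_{k+1}}$ and $\norm{e^{k-1}}$.

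Your route can be salvaged only by passing to $\mathcal M_k:=\E_k[\mathcal L_k]$, applying Robbins--Siegmund to $\mathcal M_k$, and then transferring convergence back to $\Phi_0(x^k,\wh\eta^k)$. For example, full expectations give $\sum_k\E\norm{\wh\eta^k}^2<\infty$, hence $\E_k\norm{\wh\eta^k}^2\to0$ almost surely. Local Lipschitz continuity of $\Phi$ then compares $\Phi(x^k+\wh\eta^k)$ with the $\calF_k$-measurable $\Phi(x^k)$. None of this extra work is in your proposal.

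Your Step~4 alternative, passing to the limit directly in \eqref{eq:shifted-optimality} using closedness of $\operatorname{gph}\partial R$ and of the normal bundle, is valid. It is simpler than the paper's lifted-subdifferential argument.
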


\begin{proof}
Set \[z^k:=(x^k,\wh\eta^k),\quad \Psi_e(z,e):=\Phi_0(z)+\chi_e\norm e^2.\] For simplicity, we define
$\mathcal L_k:=\Psi_e(z^k,e^k)$. By the definitions of $a$ and $c_\eta$,
\[d_\alpha=1/(4\alpha)-\ell_f/2-\rho_2/2-\ell_f^2\rho_1-2L_\Phi.\]
The stepsize bound leads to $d_\alpha>2\nu/\rho_2+4\chi_e\nu$. Set
$\omega:=(a+c_\eta)/2$. Then $\omega>c_\eta$ and $a-\omega=d_\alpha/2$.
Applying Proposition~\ref{prop:shifted-descent}(ii) with this $\omega$, adding
$\chi_e\E_{k+1}[\norm{e^{k+1}}^2]$ to both sides, using
Assumption~\ref{ass:estimator} at the index $k+1$, we obtain by using
Assumption~\ref{ass:sequential-tolerance} and enlarging the deterministic tail
constant if necessary, for all sufficiently large $k$,
\[
\begin{aligned}
\E_{k+1}[\mathcal L_{k+1}]
\leq{}& \mathcal L_k
-\omega\E_{k+1}[\norm{\wh\eta^{k+1}}^2]
+c_\eta\norm{\wh\eta^k}^2 \\
 &-\left(d_\alpha/4-\nu/(2\rho_2)-\chi_e\nu\right)
 \E_{k+1}[\norm{\eta^{k+1}}^2] \\
 &+\frac{d_\alpha C_1^2\kappa_1}{2}\norm{\eta^k}^2
 +C_{\rm sh}\kappa_1\norm{\eta^{k-1}}^2 \\
&
-\left(1/(2\rho_2)+\chi_e\right)c_e
\E_{k+1}[\mathcal E_{k+2}]+\left(1/(2\rho_2)+\chi_e\right)(c_e-\mu)
\mathcal E_{k+1}\\
&-(\chi_e-\rho_1)\norm{e^k}^2
+\sigma_{k+1}, 
\end{aligned}
\] where
$\sigma_{k+1}:=C_\sigma\kappa_2(k+1-k_0)^{-r}$ for some $C_\sigma>0$.
Since $r>2$, Lemma~\ref{lem:tail} yields the required summable-tail condition.

Lemma~\ref{lem:step-size} and the continuity of $\Phi$ imply that
$\{\Phi_0(z^k):k\in\N\}$, and hence
$\{\Psi_e(z^k,e^k):k\in\N\}$, is bounded from below by a
deterministic constant. We apply Theorem~\ref{thm:abstract-KL}(I) with the
shifted filtration $\mathcal G_k:=\calF_{k+1}$, so that
$\E[\cdot\mid\mathcal G_k]=\E_{k+1}[\cdot]$, to
$X^k:=(z^k,e^k)$ and $\Psi:=\Psi_e$, with memory terms
\[\lambda_{1,k}:=\norm{\wh\eta^k}, \quad \lambda_{2,k}:=\norm{\eta^k}, \quad \lambda_{3,k}:=\sqrt{\mathcal E_{k+1}}, \quad 
\lambda_{4,k}:=\norm{e^{k-1}}.\] The bound of $\bar\alpha$
and the choice of $\kappa_1$ imply the condition that
$a_{2,\ell}+\max\{a_{3,\ell},0\}<a_{1,\ell}$ for $\ell=1,\ldots,4$ in
Theorem~\ref{thm:abstract-KL} holds. Proposition~\ref{prop:shifted-relative}
yields that
\[\dist(0,\partial\Psi_e(z^k,e^k))\leq C(\norm{\wh\eta^k}+\norm{e^k}).\]
Hence the relative-error condition (ii) in Theorem~\ref{thm:abstract-KL} holds.
Let $x^{k_j}\to x^*$ be an arbitrary convergent subsequence. Then,
Theorem~\ref{thm:abstract-KL}(I) yields
\[\wh\eta^k\to0, \quad \eta^k\to0, \quad \mathcal E_k\to0, \quad e^k\to 0 \quad  \text{almost surely}.\]
Moreover, $(x^*,0)$ is a critical point of $\Phi_0$. Hence, by
\eqref{eq:shifted-objective}, there exist $\xi^*\in\partial R(x^*)$ and
$(u^*,\zeta^*)\in N_{\TM}(x^*,0)$ such that
\[0=(\nabla F(x^*)+\xi^*+u^*,\nabla F(x^*)+\xi^*+\zeta^*).\]
Since $(u^*,\zeta^*)\in N_{\TM}(x^*,0)$, we have
$\zeta^*\in N_{x^*}\M$. Thus
$0\in\nabla F(x^*)+\partial R(x^*)+N_{x^*}\M$, which means that $x^*$ is a
critical point of problem~\eqref{eq:model}. This completes the proof.
\end{proof}

We are now ready to show the full sequence convergence of
Algorithm~\ref{alg:iRPMVR}. The KL assumption below is imposed only on the
shifted objective $\Phi_0$.

\begin{theorem}[Sequential convergence under the KL property]
\label{thm:sequential}
Suppose that the assumptions and parameter conditions in
Lemma~\ref{lem:subsequence} hold. Let $z^k:=(x^k,\wh\eta^k)$, and assume that
$\Phi_\infty$ in Lemma~\ref{lem:subsequence} equals a deterministic constant
$\Phi_*$ almost surely. Let
\[\mathcal Z:=\{(x,\eta)\in\TM:\norm\eta\leq(M_u+\ell_R)\bar\alpha\}, \quad  
\Omega_*:=\{z\in\mathcal Z:\Phi_0(z)=\Phi_*,\ 0\in\partial\Phi_0(z)\}.\]
Assume that $\Omega_*\neq\emptyset$ and that $\Phi_0$ satisfies the
KL property
at every point of $\Omega_*$. Then
\begin{equation}\label{eq:finite-length-irpmvr}
 \sum_{k=0}^\infty
 \left(\norm{\wh\eta^k}+\norm{\eta^k}+\norm{e^k}
 +\sqrt{\mathcal E_{k+1}}\right)<+\infty
 \qquad\text{almost surely}.
\end{equation}
Consequently, $\{x^k:k\in\N\}$ converges almost surely to a critical point of
problem~\eqref{eq:model}.
\end{theorem}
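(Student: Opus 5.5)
The plan is to apply Theorem~\ref{thm:abstract-KL}(II) to the same data already set up in the proof of Lemma~\ref{lem:subsequence}. We work with the shifted filtration $\mathcal G_k:=\calF_{k+1}$ and take $X^k:=(z^k,e^k)$ and $\Psi:=\Psi_e(z,e)=\Phi_0(z)+\chi_e\norm e^2$. The memory terms are $\lambda_{1,k}=\norm{\wh\eta^k}$, $\lambda_{2,k}=\norm{\eta^k}$, $\lambda_{3,k}=\sqrt{\mathcal E_{k+1}}$ and $\lambda_{4,k}=\norm{e^{k-1}}$, and the tail is $\sigma_{k+1}=C_\sigma\kappa_2(k+1-k_0)^{-r}$. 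Conditions (i)--(iii) and part (I) were verified there, including continuity of $\Psi_e$ relative to its domain. That continuity holds because $\Phi$ is continuous and $\TM$ is closed.

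Next we verify (iv)--(vi).

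For (iv), Lemma~\ref{lem:subsequence} gives $\Phi_0(z^k)\to\Phi_\infty$ and $e^k\to0$. Hence $\Psi_e(X^k)\to\Phi_\infty=\Phi_*$ almost surely, and the limiting value is deterministic.

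For (v), take $S_\Psi:=\mathcal Z\times\barB(0,2M_u)$. It is compact because $\M$ is compact and $\mathcal Z$ is a closed bounded subset of $\TM$. It contains every $X^k$: Lemma~\ref{lem:step-size} gives $\norm{\wh\eta^k}\leq(M_u+\ell_R)\alpha\le(M_u+\ell_R)\bar\alpha$, and $\norm{e^k}\le\norm{m^k}+\norm{\grad F(x^k)}\le 2M_u$ by \eqref{eq:m-bounds}.

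For (vi), compute $\partial\Psi_e(z,e)=\partial\Phi_0(z)\times\{2\chi_e e\}$, which is a separable sum. So $0\in\partial\Psi_e(z,e)$ with $\Psi_e=\Phi_*$ forces $e=0$, $z\in\Omega_*$. The set $\Omega_*^{e}:=\Omega_*\times\{0\}$ is therefore nonempty. It is compact: it is bounded inside $S_\Psi$, and it is closed by continuity of $\Phi_0$ on $\TM$ and closedness of the limiting-subdifferential graph along sequences with converging function values. Finally, $\Psi_e$ has the KL property at each point of $\Omega_*^e$. This follows from the KL property of $\Phi_0$ on $\Omega_*$, the KL property of $e\mapsto\chi_e\norm e^2$ at $0$, and the separable-sum rule \cite[Theorem~3.8]{WangWang2023}, exactly as used inside the proof of Theorem~\ref{thm:abstract-KL}.

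Theorem~\ref{thm:abstract-KL}(II) then yields $\sum_k D_k<\infty$ almost surely. Since $D_k\ge\lambda_{\ell,k}$ for every $\ell$, this gives \eqref{eq:finite-length-irpmvr} directly. The $\norm{e^k}$ terms come from $\lambda_{4,k+1}=\norm{e^k}$, and the $\sqrt{\mathcal E_{k+1}}$ terms from $\lambda_{3,k}$.

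For convergence of $\{x^k\}$ we do not even need the successive-error bound for the full $X^k$. Lemma~\ref{lem:retraction} gives $\norm{x^{k+1}-x^k}\le C_R\norm{\eta^k}$, which is summable, so $\{x^k\}$ is almost surely Cauchy. Its limit is a cluster point, hence critical by Lemma~\ref{lem:subsequence}.

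The main obstacle I expect is a technical mismatch in the descent inequality. The memory term $\lambda_{3,k}=\sqrt{\mathcal E_{k+1}}$ is only $\calF_{k+1}$-measurable, i.e.\ adapted to $\mathcal G_k$, and the coefficient condition $a_{2,\ell}+\max\{a_{3,\ell},0\}<a_{1,\ell}$ for the $e$-term needs care. There the negative term $-(\chi_e-\rho_1)\norm{e^k}^2$ appears at the current index rather than as $-a_1\E[\lambda^2_{\cdot,k+1}]$. I would handle this as in Lemma~\ref{lem:subsequence}: take those coefficient checks as given there, and if needed reindex so that $\norm{e^k}=\lambda_{4,k+1}$ is $\mathcal G_{k}$-measurable. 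The conditional expectation of a $\mathcal G_k$-measurable quantity then simply equals that quantity.
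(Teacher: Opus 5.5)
Your proposal is correct and follows the paper's proof essentially step for step. Both apply Theorem~\ref{thm:abstract-KL}(II) to $X^k=(z^k,e^k)$ and $\Psi_e$, with the shifted filtration and memory terms from Lemma~\ref{lem:subsequence}; verify (iv)--(vi) using $e^k\to0$, compactness via Lemma~\ref{lem:step-size} and \eqref{eq:m-bounds}, and the separable-sum KL rule; and obtain convergence of $x^k$ from $\norm{x^{k+1}-x^k}\le C_R\norm{\eta^k}$. Your explicit $S_\Psi$, the identification of the critical set as $\Omega_*\times\{0\}$ together with the check that it is compact, and the reindexing $\lambda_{4,k}=\norm{e^{k-1}}$ to handle the measurability issue all fill in details the paper leaves implicit.
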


\begin{proof}
As in the proof of Lemma~\ref{lem:subsequence}, set
\(\Psi_e(z,e):=\Phi_0(z)+\chi_e\norm e^2.\) Arguing as in the proof of
Lemma~\ref{lem:subsequence}, we now use
the KL argument in Theorem~\ref{thm:abstract-KL} for
$X^k:=(z^k,e^k)$ and $\Psi:=\Psi_e$. As in the proof in
Theorem~\ref{thm:abstract-KL}, since
$e\mapsto\chi_e\norm e^2$ is analytic, the separable-sum rule for the KL
property implies that $\Psi_e$ is KL on $\Omega_*\times\{0\}$. Moreover,
Lemma~\ref{lem:subsequence} yields $e^k\to0$ and
$\mathcal E_{k+1}\to0$ almost surely, hence
$\Psi_e(z^k,e^k)\to\Phi_*$ almost surely. The compactness condition follows
from the compactness of $\M$, Lemma~\ref{lem:step-size}, \eqref{eq:m-bounds},
and the boundedness of $\grad F$ on $\M$. Thus, Items~(iv)--(vi) in
Theorem~\ref{thm:abstract-KL} hold. Items~(i) and (ii) in
Theorem~\ref{thm:abstract-KL} have been verified in the proof of
Lemma~\ref{lem:subsequence} for $\Psi=\Psi_e$ and $X^k=(z^k,e^k)$, and
Item~(iii) is obvious. Thus
Theorem~\ref{thm:abstract-KL} yields \eqref{eq:finite-length-irpmvr}. Finally,
Lemma~\ref{lem:retraction} implies
\[\sum_k\norm{x^{k+1}-x^k}\leq C_R\sum_k\norm{\eta^k}<+\infty\] and hence
$x^k$ converges almost surely to some $x^*\in\M$. Lemma~\ref{lem:subsequence}
implies that $x^*$ is a critical point of problem~\eqref{eq:model}. This
completes the proof.
\end{proof}

For $X^k:=(z^k,e^k)$, the memory terms used in the proof of
Lemma~\ref{lem:subsequence}, Lemma~\ref{lem:retraction}, and the triangle
inequality lead to
\[\norm{X^{k+1}-X^k}\leq C_R\norm{\eta^k}
+\norm{\wh\eta^{k+1}}+\norm{\wh\eta^k}
+\norm{e^{k+1}}+\norm{e^k}\leq C D_{k+1}\] for some $C>0$.
Hence the successive-error condition in Theorem~\ref{thm:abstract-rates}
holds. That theorem and Theorem~\ref{thm:sequential} yield the following
consequence.

\begin{corollary}[KL rates for Algorithm~\ref{alg:iRPMVR} in the deterministic
case]\label{cor:irpmvr-rates}
Suppose that $n=1$, $\gamma_k\equiv1$, and that the assumptions and parameter
conditions in Theorem~\ref{thm:sequential} hold. Let $x^*$ be the critical
point obtained therein, set $z^k:=(x^k,\wh\eta^k)$ and
$z^*:=(x^*,0)$, and assume that $\Phi_0$ defined by
\eqref{eq:shifted-objective} has KL exponent
$\theta\in[0,1)$ at $z^*$.
If $\kappa_2=0$ and $\theta\in[0,1/2]$, then
\[\abs{\Phi_0(z^k)-\Phi_0(z^*)}=O(\rho^k), \quad
\norm{\wh\eta^k}+\norm{\eta^k}=O(\rho^k), \quad \norm{x^k-x^*}=O(\rho^k)\]
for some $\rho\in(0,1)$. In all remaining cases covered by
Theorem~\ref{thm:abstract-rates}(ii), the corresponding polynomial rates in
that theorem hold with $\sigma_k=O(\kappa_2k^{-r})$.
\end{corollary}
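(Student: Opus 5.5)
The plan is to specialize Theorem~\ref{thm:abstract-rates} to the setting already assembled in the proofs of Lemma~\ref{lem:subsequence} and Theorem~\ref{thm:sequential}. We take $X^k:=(z^k,e^k)$, $\Psi:=\Psi_e=\Phi_0+\chi_e\norm{e}^2$, the four memory terms $\lambda_{1,k}=\norm{\wh\eta^k}$, $\lambda_{2,k}=\norm{\eta^k}$, $\lambda_{3,k}=\sqrt{\mathcal E_{k+1}}$, $\lambda_{4,k}=\norm{e^{k-1}}$, and the tail $\sigma_{k+1}=C_\sigma\kappa_2(k+1-k_0)^{-r}$.

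When $n=1$, every estimator (with $u^k=\grad F(x^k)$) is exact, so there is no sampling. All conditional expectations are then identities, and the descent inequality derived in the proof of Lemma~\ref{lem:subsequence} holds pathwise, giving \eqref{eq:pathwise-descent}. The relative-error bound comes from Proposition~\ref{prop:shifted-relative}. The successive-error bound $\norm{X^{k+1}-X^k}\le CD_{k+1}$ was recorded just before the statement. Conditions (iv)--(vi) are exactly those verified in Theorem~\ref{thm:sequential}, where (iv) is automatic in the deterministic case by Remark~\ref{rem:det-limit}. Hence every hypothesis of Theorem~\ref{thm:abstract-rates} holds, with limit $X^*=(z^*,0)$.

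The one point needing care is the KL exponent. The hypothesis is stated for $\Phi_0$ at $z^*$, but Theorem~\ref{thm:abstract-rates} needs it for $\Psi_e$ at $(z^*,0)$. Since $e\mapsto\chi_e\norm e^2$ has KL exponent $1/2$ at $0$, the separable-sum rule \cite[Theorem~3.3]{LiPong2018} gives $\Psi_e$ the exponent $\max\{\theta,1/2\}$. This exponent equals $1/2$ when $\theta\le1/2$, so the linear case applies, and it equals $\theta$ when $\theta>1/2$. This exponent enters $\bar\theta$ consistently in part (ii), because there $\bar\theta$ is already a maximum with $1/2$ or larger.

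For the linear rate we take $\kappa_2=0$. Then $\sigma_k\equiv0$ eventually, so the geometric-tail hypothesis of Theorem~\ref{thm:abstract-rates}(i) holds trivially. That part gives $\abs{\Psi_e(X^k)-\Psi_e(X^*)}$, $D_k$, and $\norm{X^k-X^*}$ all of order $O(\rho^k)$, and we transfer these to the stated quantities as follows:
\begin{itemize}
\item $D_k\ge\norm{\wh\eta^k}+\norm{\eta^k}$, which gives the step bounds;
\item $\norm{x^k-x^*}\le\norm{X^k-X^*}$, which gives the iterate bound;
\item $\abs{\Phi_0(z^k)-\Phi_0(z^*)}\le\abs{\Psi_e(X^k)-\Psi_e(X^*)}+\chi_e\norm{e^k}^2$ with $\norm{e^k}\le D_{k+1}$, which gives the objective bound.
\end{itemize}
In the remaining cases, Theorem~\ref{thm:abstract-rates}(ii) applies with $\sigma_k=O(\kappa_2k^{-r})$ and $r>2$, and the same comparisons transfer its polynomial rates. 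The main (mild) obstacle is this bookkeeping between $\Psi_e$ and $\Phi_0$, together with the exponent adjustment from adding $\chi_e\norm e^2$.
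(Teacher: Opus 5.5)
Your proposal is correct and takes the same route as the paper. The paper also applies Theorem~\ref{thm:abstract-rates} to $X^k=(z^k,e^k)$ and $\Psi_e$, with the memory terms from Lemma~\ref{lem:subsequence} and the successive-error bound $\norm{X^{k+1}-X^k}\le CD_{k+1}$ recorded just before the corollary, combined with Theorem~\ref{thm:sequential}. Your explicit lifting of the KL exponent from $\Phi_0$ to $\Psi_e$ via the separable-sum rule, and the bookkeeping that transfers the rates back to $\Phi_0(z^k)$, the step norms, and $x^k$, fill in details the paper leaves implicit.
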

\section{Complexity analysis}
In this section, we establish the outer-iteration complexity and the overall
inner complexity of iRPMVR. We first introduce a model-based criticality
measure whose vanishing characterizes the critical points of
problem~\eqref{eq:model}.

\begin{definition}[Model-based criticality measure]\label{def:Theta}
For $x\in\M$ and $\eta\in T_x\M$, define
\[\Theta(x,\eta):= \sqrt{\dist^2\bigl(0,\nabla F(x+\eta)+\partial R(x+\eta)+N_x\M\bigr) +\norm\eta^2}.\]
A point $x\in\M$ is called an $\eps$-critical point if
$\Theta(x,\eta)\leq\eps$ for some $\eta\in T_x\M$.
\end{definition}

For every iterate $x^k\in\M$, let $\wh\eta^k\in T_{x^k}\M$ be the exact
minimizer of $F_k$ over $T_{x^k}\M$; see \eqref{eq:exact-step}. We write
$\Theta_k:=\Theta(x^k,\wh\eta^k)$. The following estimate is the bridge between
the criticality measure $\Theta_k$ and the quantities appearing in the descent
estimate of Theorem~\ref{thm:descent}.

\begin{proposition}[Criticality estimates]\label{prop:criticality}
Suppose Assumptions~\ref{ass:geometry} and \ref{ass:estimator} hold. Let
$\{(x^k,\eta^k):k\in\N\}$ be generated by Algorithm~\ref{alg:iRPMVR}, and let
$\wh\eta^k$ be defined by \eqref{eq:exact-step}. Then the following estimates
hold with $C_\Theta:=1+2(\ell_f+\alpha^{-1})^2$.
\begin{enumerate}
\item[(i)] For every $k\geq0$,
$\Theta_k^2\leq C_\Theta\norm{\wh\eta^k}^2+2\norm{e^k}^2.$
\item[(ii)] For every $k\geq0$,
$\E[\Theta_k^2]\leq C_\Theta\E[\norm{\wh\eta^k}^2] +2c_e\E[\mathcal E_k-\mathcal E_{k+1}] +2\nu\E[\norm{\eta^k}^2].$
\end{enumerate}
\end{proposition}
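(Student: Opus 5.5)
For part (i), I will build an explicit element of $\nabla F(x^k+\wh\eta^k)+\partial R(x^k+\wh\eta^k)+N_{x^k}\M$ from the optimality condition of the exact tangent step. As in the proof of Proposition~\ref{prop:shifted-relative}, first-order optimality of $\wh\eta^k$ in \eqref{eq:exact-step} gives $\xi\in\partial R(x^k+\wh\eta^k)$ and $\zeta'\in N_{x^k}\M$ with $m^k+\alpha^{-1}\wh\eta^k+\xi+\zeta'=0$. Since $\grad F(x^k)-\nabla F(x^k)\in N_{x^k}\M$ and $m^k=\grad F(x^k)+e^k$, we can absorb the normal components into some $\zeta\in N_{x^k}\M$ and obtain
\[
\nabla F(x^k)+e^k+\alpha^{-1}\wh\eta^k+\xi+\zeta=0.
\]
The vector $v:=\nabla F(x^k+\wh\eta^k)+\xi+\zeta$ then lies in the set in Definition~\ref{def:Theta}, and it satisfies $v=\nabla F(x^k+\wh\eta^k)-\nabla F(x^k)-e^k-\alpha^{-1}\wh\eta^k$.

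To bound $\norm v$, I use that $\wh\eta^k$ is small by Lemma~\ref{lem:step-size}. With the stepsize restriction, $x^k+\wh\eta^k$ stays in the neighborhood $U$ where $\nabla F$ is $\ell_f$-Lipschitz; if needed, I will shrink $\delta$ so that this ball is contained in $U$. Hence
\[
\norm v\le(\ell_f+\alpha^{-1})\norm{\wh\eta^k}+\norm{e^k},
\]
and by $(p+q)^2\le 2p^2+2q^2$,
\[
\dist^2(\cdot)\le 2(\ell_f+\alpha^{-1})^2\norm{\wh\eta^k}^2+2\norm{e^k}^2.
\]
Adding $\norm{\wh\eta^k}^2$ gives (i) with $C_\Theta=1+2(\ell_f+\alpha^{-1})^2$.

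For part (ii), I take the conditional expectation $\E_k$ of (i). Since $x^k$ is $\calF_k$-measurable but $\wh\eta^k$ depends on $m^k$, I keep $\E_k[\norm{\wh\eta^k}^2]$ as is. For $\E_k[\norm{e^k}^2]$, Assumption~\ref{ass:estimator} gives
\[
\E_k[\norm{e^k}^2]\le c_e(\mathcal E_k-\E_k[\mathcal E_{k+1}])+\nu\E_k[\norm{\eta^k}^2]-\mu\mathcal E_k,
\]
and I drop the nonpositive term $-\mu\mathcal E_k$. Taking total expectations via the tower property then yields (ii). Integrability holds because $\wh\eta^k$ and $\eta^k$ are uniformly bounded by Lemma~\ref{lem:step-size}, and $\E[\mathcal E_k]<\infty$ follows inductively from the assumption, since $c_e\E_k[\mathcal E_{k+1}]\le c_e\mathcal E_k+\nu\E_k\norm{\eta^k}^2$.

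The only mildly delicate step is justifying the Lipschitz estimate at the ambient point $x^k+\wh\eta^k$, which need not lie on $\M$. I handle it through the uniform smallness of $\wh\eta^k$ combined with $U$ being an open neighborhood of the compact set $\M$.
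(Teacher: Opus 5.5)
Your proposal is correct and follows essentially the same route as the paper: the optimality condition of the exact tangent step, absorption of $\grad F(x^k)-\nabla F(x^k)$ into the normal component, and the Lipschitz bound give (i); then $\E_k$ of (i), Assumption~\ref{ass:estimator} with $-\mu\mathcal E_k$ dropped, and the tower property give (ii). Your two extra remarks are valid refinements of points the paper's proof leaves implicit: first, that $x^k+\wh\eta^k$ must lie in $U$, which requires $\alpha$ small (the statement omits this, but it holds wherever the proposition is applied); second, that $\E[\mathcal E_k]<+\infty$ propagates by induction.
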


\begin{proof}
Fix $k\geq0$ and for simplicity, write \[x:=x^k, \quad  \wh\eta:=\wh\eta^k, \quad m:=m^k, \quad e:=e^k=m^k-\grad F(x^k).\] Since $\wh\eta$ is the exact
minimizer of \eqref{eq:exact-step}, there exist
$\xi\in\partial R(x+\wh\eta)$ and $\zeta\in N_x\M$ such that
$m+\xi+\alpha^{-1}\wh\eta+\zeta=0$. Therefore, we get
\[\nabla F(x+\wh\eta)+\xi+\zeta+\grad F(x)-\nabla F(x) =\nabla F(x+\wh\eta)-\nabla F(x)-e-\alpha^{-1}\wh\eta.\]
Using the Lipschitz continuity of $\nabla F$ and
$\grad F(x)-\nabla F(x)\in N_x\M$, we obtain Item~(i). Taking conditional
expectation in Item~(i) and using Assumption~\ref{ass:estimator} together with
the tower property of conditional expectation, we obtain Item~(ii).
\end{proof}

To accommodate history-dependent inner accuracies while retaining a tractable
complexity analysis, we impose the following tolerance rule in this section.

\begin{assumption}[History-dependent inner tolerance]\label{ass:complexity-tol}
Set $\eta^{-1}:=0$. There exist a constant $\kappa\geq0$ and a deterministic
sequence $\{\varsigma_k\geq0:k\in\N\}\subseteq[0,\bar\Delta]$ such that
\begin{equation*}
 \Delta_k=\min\left\{\bar\Delta,
 \max\{\kappa\norm{\eta^{k-1}}^2,\varsigma_k\}\right\},
 \qquad k\in\N.
\end{equation*}
\end{assumption}

The history-dependent term \(\kappa\|\eta^{k-1}\|^2\) allows the inexactness
to be absorbed into the outer descent estimate, whereas the deterministic
floor \(\varsigma_k\) provides a prescribed lower bound on the tolerance,
preventing it from becoming arbitrarily small and thereby enabling an a priori
bound on the total inner work.
Since $\eta^{k-1}$ is $\calF_k$-measurable,
Assumption~\ref{ass:complexity-tol} ensures that $\Delta_k$ is selected from
the available history before the $k$-th inner solve. Moreover, we have
\(\varsigma_k\leq\Delta_k\leq \kappa\norm{\eta^{k-1}}^2+\varsigma_k.\)

\subsection{Outer and IFO complexities for expected stationarity}
We next derive the outer-iteration complexity of Algorithm~\ref{alg:iRPMVR} for
expected stationarity and the corresponding incremental first-order oracle
(IFO) complexities, where an IFO call returns first-order information for one
component function $f_j$. The proof combines the conditional expected descent
estimate in Theorem~\ref{thm:descent} with the criticality estimates in
Proposition~\ref{prop:criticality}.

\begin{theorem}[Outer complexity for expected stationarity]
\label{thm:outer-complexity}
Suppose Assumptions~\ref{ass:geometry}, \ref{ass:estimator}, and
\ref{ass:complexity-tol} hold. Let $C_\alpha$ be from
Lemma~\ref{lem:step-size} and $C_0$ from Lemma~\ref{lem:B-bounds}, and assume
that
\[0<\bar\alpha<\min\{C_\alpha,
1/(L_F+2L_R+2\sqrt{\nu/\gamma}+4\ell_RC_0\kappa/\gamma)\}.\] Let
$\sigma_0:=1/(2\alpha)-L_F/2-L_R$, $\tau:=\sqrt{\nu/\gamma}$, and define
\(\mathcal L_k:=\Phi(x^k)+c_e\mathcal E_k/(2\tau).\) Set
\(a_\eta:=\gamma(\sigma_0-\tau/2)-\nu/(2\tau)-2\ell_RC_0\kappa\), and \(a_{\wh\eta}:=\gamma/(2\alpha).\) Then $a_\eta>0$. Assume further that
$\sum_{k=0}^\infty\varsigma_k<+\infty$, and define
\[\widetilde C:=\Phi(x^0)-\inf_{x\in\M}\Phi(x)
+2\ell_RC_0\sum_{k=0}^\infty\varsigma_k, \quad 
C_{\rm comp}:=\max\{C_\Theta/a_{\wh\eta},2\nu/a_\eta\},\] where $C_\Theta$
is from Proposition~\ref{prop:criticality}. Then the following statements hold.
\begin{enumerate}
\item[(i)] For every $K\geq1$,
$\sum_{k=0}^{K-1}\E[\Theta_k^2]\leq C_{\rm comp}\widetilde C
+c_e(2+C_{\rm comp}/(2\tau))\E[\mathcal E_0]$.
\item[(ii)] Algorithm~\ref{alg:iRPMVR} produces an iterate satisfying
$\min_{0\leq k\leq K-1}\E[\Theta_k^2]\leq\eps^2$ within
$O\bigl([C_{\rm comp}+c_e(1+C_{\rm comp}/\tau)\E[\mathcal E_0]]\eps^{-2}\bigr)$
outer iterations.
\end{enumerate}
\end{theorem}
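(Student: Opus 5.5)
We start from the conditional expected descent estimate of Theorem~\ref{thm:descent} with $\tau:=\sqrt{\nu/\gamma}$, and first absorb the inexactness. By Assumption~\ref{ass:complexity-tol},
\[
\Delta_k\le\kappa\norm{\eta^{k-1}}^2+\varsigma_k .
\]
Using $\gamma_k\ge\gamma$ and $-\mu\mathcal E_k/(2\tau)\le0$, and moving the $\mathcal E$ terms into $\mathcal L_k=\Phi(x^k)+c_e\mathcal E_k/(2\tau)$, we get
\[
\E_k[\mathcal L_{k+1}]\le\mathcal L_k-\Bigl(\gamma(\sigma_0-\tfrac\tau2)-\tfrac{\nu}{2\tau}\Bigr)\E_k[\norm{\eta^k}^2]-a_{\wh\eta}\E_k[\norm{\wh\eta^k}^2]+2\ell_RC_0\kappa\norm{\eta^{k-1}}^2+2\ell_RC_0\varsigma_k .
\]
The delayed term $\norm{\eta^{k-1}}^2$ is handled by telescoping after taking total expectations. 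Summing from $k=0$ to $K-1$ with $\eta^{-1}=0$, the sum $\sum_k 2\ell_RC_0\kappa\E\norm{\eta^{k-1}}^2$ is dominated by the same sum of $\E\norm{\eta^k}^2$. This produces the net coefficient $a_\eta$ in front of $\sum_{k=0}^{K-1}\E\norm{\eta^k}^2$.

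Positivity of $a_\eta$ is a short computation from the stepsize bound. We have $\gamma\sigma_0=\gamma/(2\alpha)-\gamma(L_F/2+L_R)$ and $\gamma\tau/2+\nu/(2\tau)=\sqrt{\nu\gamma}$. The condition $1/\alpha>L_F+2L_R+2\sqrt{\nu/\gamma}+4\ell_RC_0\kappa/\gamma$, multiplied by $\gamma/2$, gives exactly $a_\eta>0$.

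Telescoping then yields
\[
a_\eta\sum_{k<K}\E\norm{\eta^k}^2+a_{\wh\eta}\sum_{k<K}\E\norm{\wh\eta^k}^2\le\E[\mathcal L_0]-\E[\mathcal L_K]+2\ell_RC_0\sum_k\varsigma_k .
\]
Since $\mathcal E_K\ge0$ and $\Phi(x^K)\ge\inf_\M\Phi$ (finite by compactness and continuity), the right-hand side is at most $\widetilde C+c_e\E[\mathcal E_0]/(2\tau)$.

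For (i) we sum Proposition~\ref{prop:criticality}(ii) over $k<K$. The middle term telescopes:
\[
2c_e\E[\mathcal E_0-\mathcal E_K]\le2c_e\E[\mathcal E_0].
\]
The remaining terms are bounded by
\[
C_\Theta\sum\E\norm{\wh\eta^k}^2+2\nu\sum\E\norm{\eta^k}^2\le C_{\rm comp}\Bigl(a_{\wh\eta}\sum\E\norm{\wh\eta^k}^2+a_\eta\sum\E\norm{\eta^k}^2\Bigr),
\]
by definition of $C_{\rm comp}$. Inserting the previous bound gives
\[
C_{\rm comp}\widetilde C+c_e\bigl(2+C_{\rm comp}/(2\tau)\bigr)\E[\mathcal E_0],
\]
which is (i).

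Part (ii) follows from
\[
\min_{k<K}\E[\Theta_k^2]\le K^{-1}\sum_{k<K}\E[\Theta_k^2],
\]
so it suffices to take $K$ at least the bound in (i) divided by $\eps^2$. That bound is $O\bigl([C_{\rm comp}+c_e(1+C_{\rm comp}/\tau)\E[\mathcal E_0]]\eps^{-2}\bigr)$, with $\widetilde C$ treated as a problem constant.

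The main point requiring care is the bookkeeping of the delayed inexactness term together with the random $\gamma_k$. Because $\gamma_k$ is $\calF_k$-measurable and bounded below by $\gamma$, we may replace $\gamma_k$ by $\gamma$ in the negative terms before taking expectations. The shift by one index must be matched so that the $\kappa$-contribution lands on $\E\norm{\eta^k}^2$ with $\eta^{-1}=0$. Integrability of $\Phi(x^k)$ is automatic from compactness, and that of $\mathcal E_k$ follows inductively from Assumption~\ref{ass:estimator}, since $\E[\mathcal E_0]<\infty$ and $\eta^k$ is bounded by Lemma~\ref{lem:step-size}.
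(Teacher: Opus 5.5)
Your proposal is correct and follows the paper's proof essentially step for step. You take full expectations in Theorem~\ref{thm:descent} with $\tau=\sqrt{\nu/\gamma}$ and $\gamma_k\ge\gamma$, absorb $2\ell_RC_0\kappa\,\E[\norm{\eta^{k-1}}^2]$ through the index shift with $\eta^{-1}=0$, telescope $\mathcal L_k$, and combine the result with Proposition~\ref{prop:criticality}(ii) and the definition of $C_{\rm comp}$. Your explicit check that $a_\eta>0$ also gives $\sigma_0-\tau/2>0$, which justifies replacing $\gamma_k$ by $\gamma$; the paper only asserts this step.
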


\begin{proof}
The condition on $\bar\alpha$ implies $a_\eta>0$. By
Theorem~\ref{thm:descent}, taking full expectation and using
$\gamma_k\geq\gamma$, we obtain
\[\E[\mathcal L_{k+1}] \leq\E[\mathcal L_k]-(a_\eta+2\ell_RC_0\kappa) \E[\norm{\eta^k}^2]-a_{\wh\eta}\E[\norm{\wh\eta^k}^2] +2\ell_RC_0\E[\Delta_k].\]
By Assumption~\ref{ass:complexity-tol},
$\E[\Delta_k]\leq\kappa\E[\norm{\eta^{k-1}}^2]+\varsigma_k$.
Summing the preceding inequality from $k=0$ to $K-1$, using $\eta^{-1}=0$,
and shifting the index yield
\[a_\eta\sum_{k=0}^{K-1}\E[\norm{\eta^k}^2] +a_{\wh\eta}\sum_{k=0}^{K-1}\E[\norm{\wh\eta^k}^2] \leq\E[\mathcal L_0]-\inf_{x\in\M}\Phi(x) +2\ell_RC_0\sum_{k=0}^{K-1}\varsigma_k.\]
Moreover,
$\sum_{k=0}^{K-1}\E[\mathcal E_k-\mathcal E_{k+1}]
\leq\E[\mathcal E_0]$. Combining these inequalities with
Proposition~\ref{prop:criticality}(ii) and
\(\E[\mathcal L_0]=\Phi(x^0)+c_e\E[\mathcal E_0]/(2\tau)\) implies Item~(i).
Item~(ii) follows from
\(K\min_{0\leq k<K}\E[\Theta_k^2]\leq
\sum_{k=0}^{K-1}\E[\Theta_k^2].\)
\end{proof}

Since Algorithm~1 performs one retraction at each outer iteration, we derive the same
bound for retraction complexity. The following result combines Theorem~\ref{thm:outer-complexity} and
Proposition~\ref{prop:VR-parameters} with the per-iteration IFO costs
$O(b+n/q_{\rm ep})$ for SVRG and SARAH/SPIDER and $O(b)$, after $O(n)$
initialization, for SAGA and SAG. For SVRG, SARAH/SPIDER, and SAGA, the
resulting orders match those of their standard Euclidean nonconvex counterparts
\cite{Reddi2016SVRG,Reddi2016Proximal,Pham2020,Fang2018SPIDER}.

\begin{theorem}[Concrete VR outer and IFO complexity]
\label{thm:VR-complexity}
Suppose Assumptions~\ref{ass:geometry} and \ref{ass:complexity-tol} hold, and
initialize the VR state as in Proposition~\ref{prop:VR-parameters}. Assume that
$\sum_{k=0}^\infty\varsigma_k\leq C_{\rm sum}$, where $C_{\rm sum}$ and
$\kappa$ are independent of $n$ and $\eps$. For the four estimators, choose
\[
\begin{array}{c|c|c|c}
\textnormal{Estimator} & \textnormal{Parameters}
 & \textnormal{Outer comp.} & \textnormal{IFO complexity}\\
\hline
\textnormal{SVRG}
 & q_{\rm ep}=\lceil n^{1/3}\rceil,\ b=\lceil n^{2/3}\rceil
 & O(\eps^{-2}) & O(n+n^{2/3}\eps^{-2})\\
\textnormal{SARAH/SPIDER}
 & q_{\rm ep}=\lceil n^{1/2}\rceil,\ b=\lceil n^{1/2}\rceil
 & O(\eps^{-2}) & O(n+n^{1/2}\eps^{-2})\\
\textnormal{SAGA}
 & b=\lceil n^{2/3}\rceil
 & O(\eps^{-2}) & O(n+n^{2/3}\eps^{-2})\\
\textnormal{SAG}
 & b=\lceil\kappa_b n\rceil
 & O(\eps^{-2}) & O(n+n\eps^{-2})
\end{array}
\]
where $q_{\rm ep}$ is used only for SVRG and SARAH/SPIDER,
$\kappa_b\in(0,1)$ is a fixed constant independent of $n$ and $\eps$. Then
there exists $\alpha_0>0$, independent of $n$, such that, for every fixed
$0<\alpha\leq\bar\alpha\leq\alpha_0$, Algorithm~\ref{alg:iRPMVR} produces an
iterate satisfying $\E[\Theta_k^2]\leq\eps^2$ within the stated outer and IFO
complexities.
\end{theorem}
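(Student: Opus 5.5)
The plan is to reduce the statement to Theorem~\ref{thm:outer-complexity}. The only estimator-specific inputs are the constants $(c_e,\mu,\nu)$ in Assumption~\ref{ass:estimator} and the initial memory $\E[\mathcal E_0]$. These are supplied by Proposition~\ref{prop:VR-parameters}, which I take to give, for each of the four estimators under the stated choices of $q_{\rm ep}$ and $b$, a verification of Assumption~\ref{ass:estimator}. The key requirements are:
\begin{itemize}
\item $\nu=O(1)$ and $c_e/\tau=O(1)$ uniformly in $n$, after possibly rescaling $\mathcal E_k$ by a factor depending on $n$;
\item $\E[\mathcal E_0]$ is bounded independently of $n$, and equals zero for the full-gradient initialization $u^0=\grad F(x^0)$.
\end{itemize}
Concretely, for SVRG and SARAH/SPIDER the memory is built from a weighted sum of $\norm{\eta^j}^2$ since the last checkpoint. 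The choices $q_{\rm ep}/b=O(1)$ (SARAH) and $q_{\rm ep}^{2}/b=O(1)$ (SVRG) make $\nu=O(\ell_T^2)$-type constants $n$-independent. For SAGA and SAG, the memory is the averaged distance of the stored points to $x^k$, and $b=\lceil n^{2/3}\rceil$, respectively $b=\lceil\kappa_b n\rceil$, makes the contraction factor of that memory compatible with $n$-independent constants.

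Given these $n$-independent constants, I would define $\alpha_0:=\min\{C_\alpha,1/(L_F+2L_R+2\sqrt{\nu/\gamma}+4\ell_RC_0\kappa/\gamma)\}$ (strictly smaller, say half). This quantity is independent of $n$ because $C_\alpha$, $L_F$, $L_R$, $\ell_R$, $C_0$ depend only on the geometry, on $M_u=c_fM_f$, and on $\bar\Delta$. For $\alpha\le\bar\alpha\le\alpha_0$, Theorem~\ref{thm:outer-complexity} applies. The constant $\widetilde C\le\Phi(x^0)-\inf_\M\Phi+2\ell_RC_0C_{\rm sum}$ is $n$-independent. Moreover $C_{\rm comp}=\max\{C_\Theta/a_{\wh\eta},2\nu/a_\eta\}$ is bounded for fixed $\alpha$, since $a_\eta,a_{\wh\eta}>0$ depend only on $\alpha,\gamma,\nu,\kappa$. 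Then Theorem~\ref{thm:outer-complexity}(i) gives $\sum_{k<K}\E[\Theta_k^2]\le C$ with $C$ independent of $n$ and $\eps$. Hence $K=\lceil C\eps^{-2}\rceil$ outer iterations yield some $k$ with $\E[\Theta_k^2]\le\eps^2$. This is the $O(\eps^{-2})$ outer and retraction bound.

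For the IFO count, I multiply $K$ by the per-iteration cost and add the initialization. For SVRG and SARAH/SPIDER each iteration costs $b$ IFOs plus an amortized $n/q_{\rm ep}$ for checkpoint full gradients, which adds to $n$ (initial full gradient) plus $K(b+n/q_{\rm ep})+n\lceil K/q_{\rm ep}\rceil$. With $q_{\rm ep}=b=\lceil\sqrt n\rceil$ this gives $O(n+\sqrt n\eps^{-2})$. With $q_{\rm ep}=\lceil n^{1/3}\rceil$ and $b=\lceil n^{2/3}\rceil$ it gives $O(n+n^{2/3}\eps^{-2})$. For SAGA and SAG the cost is $O(n)$ for the table initialization plus $Kb$, which gives $O(n+n^{2/3}\eps^{-2})$ and $O(n+n\eps^{-2})$ respectively. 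Ceilings contribute only constant factors, since $n\ge1$.

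The main obstacle is the $n$-independence of $(c_e,\nu)$ and of the normalization $c_e/\tau$ in Proposition~\ref{prop:VR-parameters}. This matters especially for SAG, whose estimator is biased, so the memory recursion must absorb a bias term. That is why a constant fraction $b=\kappa_b n$ is needed. I would lean entirely on Proposition~\ref{prop:VR-parameters} for these bounds, checking only that its stepsize threshold is also $n$-independent so it can be folded into $\alpha_0$. If it is not stated that way, I would track the projection-transport constant $L_T$ from Lemma~\ref{lem:projection}(ii) through each recursion to confirm it.
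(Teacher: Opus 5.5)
Your proposal is correct and follows essentially the same route as the paper. Proposition~\ref{prop:VR-parameters} gives $\E[\mathcal E_0]=0$ and, via $q_{\rm ep}^2/b,\ q_{\rm ep}/b,\ n^2/b^3,\ n^2/b^2=O(1)$, a bound $\nu\le\bar\nu$ with $\bar\nu$ independent of $n$ and $\eps$; this $\bar\nu$ goes into the stepsize threshold of Theorem~\ref{thm:outer-complexity} to define $\alpha_0$, and $K=O(\eps^{-2})$ is then multiplied by the per-iteration cost $O(b+n/q_{\rm ep})$ or $O(b)$, plus $O(n)$ initialization. Two small points: define $\alpha_0$ with the uniform bound $\bar\nu$ rather than the $n$-dependent $\nu$ (legitimate, since enlarging $\nu$ preserves Assumption~\ref{ass:estimator}), and your concern about $c_e/\tau$ is moot because $\E[\mathcal E_0]=0$ removes the only $c_e$-dependent term in Theorem~\ref{thm:outer-complexity}(i).
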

\begin{proof}
	For the parameter choices in the table, Proposition~\ref{prop:VR-parameters}
	gives $\E[\mathcal E_0]=0$ and
	\[
	\frac{q_{\rm ep}^2}{b}=O(1),\qquad
	\frac{q_{\rm ep}}{b}=O(1),\qquad
	\frac{n^2}{b^3}=O(1),\qquad
	\frac{n^2}{b^2}=O(1)
	\]
	for SVRG, SARAH/SPIDER, SAGA, and SAG, respectively. Moreover,
	$c_e=O(1)$ in all four cases. Hence, there exists a constant
	$\bar\nu>0$, independent of $n$ and $\eps$, such that $\nu\leq\bar\nu$.
	Choose
	\[
	0<\alpha_0<
	\min\left\{C_\alpha,
	\frac{1}{L_F+2L_R+2\sqrt{\bar\nu/\gamma}
		+4\ell_RC_0\kappa/\gamma}\right\}.
	\]
	Then, for every fixed $0<\alpha\leq\bar\alpha\leq\alpha_0$, the stepsize
	condition in Theorem~\ref{thm:outer-complexity} holds uniformly in $n$.
	Since $\sum_{k=0}^\infty\varsigma_k\leq C_{\rm sum}$ and
	$\E[\mathcal E_0]=0$, that theorem yields an outer complexity of
	$K=O(\eps^{-2})$, with the hidden constant independent of $n$ and $\eps$.	
	For SVRG and SARAH/SPIDER, the total IFO cost is
	\(
	O\bigl(n+K(b+n/q_{\rm ep})\bigr),
	\)
	whereas for SAGA and SAG it is $O(n+Kb)$. Substituting the parameter choices
	in the table and using $K=O(\eps^{-2})$ gives the stated IFO complexities.
\end{proof}
\subsection{Inner solvers and overall complexity}
We now briefly discuss several implementations of the inner routine. Recall
from \eqref{eq:Gk} that, at the $k$-th outer iteration, the dual problem is
$\min_{\lambda\in\R^p}G_k(\lambda)$, where
$G_k(\lambda):=F_k^*(-B_{x^k}^{\trans}\lambda)$, and
Algorithm~\ref{alg:iRPMVR} requires a
dual point $\lambda^k$ satisfying
$\norm{\nabla G_k(\lambda^k)}\leq\Delta_k$. As discussed in Section~3,
we obtain from Lemma~\ref{lem:B-bounds} that $G_k$ is convex and has an
$L_D$-Lipschitz continuous gradient, where
$L_D=\alpha\overline C_B^2$ is independent of $k$ and $n$. For each
$k\in\N$, the dual problem admits an optimal solution $\lambda_k^*$, and these
solutions can be chosen uniformly bounded. Indeed, the KKT conditions for
\eqref{eq:exact-step} yield a dual optimal solution $\lambda_k^*\in\R^p$ and
$\xi^k\in\partial R(x^k+\wh\eta^k)$ such that
$B_{x^k}^{\trans}\lambda_k^*=-m^k-\alpha^{-1}\wh\eta^k-\xi^k.$
Hence, Lemma~\ref{lem:B-bounds}, \eqref{eq:m-bounds},
Lemma~\ref{lem:step-size}, and $\norm{\xi^k}\leq\ell_R$ yield
\[\norm{\lambda_k^*}\leq\frac{2(M_u+\ell_R)}{\underline C_B}, \quad k\in\N.\]
We consider the following three implementations of the inner routine.
\paragraph{\normalfont\bfseries Algorithm 1-FISTA}
The first implementation applies Nesterov's accelerated gradient method, or
equivalently FISTA with a zero nonsmooth term, to
$\min_{\lambda\in\R^p}G_k(\lambda)$; see
\cite[Theorem~2.2]{Nesterov2018} and \cite[Theorem~4.4]{BeckTeboulle2009}.
Let $\lambda_k^*$ be an optimal solution and initialize the inner iteration at
zero. The standard objective-gap estimate and
$\norm{\nabla G_k(\lambda)}^2 \leq2L_D(G_k(\lambda)-\inf G_k)$
yield
\[\norm{\nabla G_k(\lambda^{k,j})}\leq2L_D\norm{\lambda_k^*}/(j+1).\]
Hence, by the uniform boundedness of $\{\lambda_k^*:k\in\N\}$,
$K_{\rm FISTA}(\Delta_k)=O(\Delta_k^{-1})$ inner iterations suffice to obtain
$\norm{\nabla G_k(\lambda^k)}\leq\Delta_k$.
\paragraph{\normalfont\bfseries Algorithm 1-NFG}
The second implementation follows Nesterov's fast-gradient (NFG) in
\cite[Section~5.2]{Jiang2025}. In this approach, NFG is applied not to the
original dual problem directly, but to a regularized dual problem
\[\min_{\lambda\in\R^p}\ G_k(\lambda)+\frac{\delta_k}{2}\norm\lambda^2,\]
where $\delta_k>0$ is chosen proportional to the target residual $\Delta_k$.
By the NFG estimate in \cite[Lemma~5.4]{Jiang2025}, which is based on the
accelerated-gradient theory in
\cite[Section~2.2.2 and Theorem~2.2.7]{Nesterov2018}, one obtains
$K_{\rm NFG}(\Delta_k) =O\left(\Delta_k^{-1/2}\log(1+\Delta_k^{-1})\right)$
inner iterations for satisfying the original dual residual condition.
\paragraph{\normalfont\bfseries Algorithm 1-AR}
The third implementation follows the accumulative regularization (AR) strategy used in \cite[Section~5.2]{Jiang2025}. This method solves a sequence of regularized dual problems with moving proximal centers and increasing regularization parameters. The corresponding complexity estimate follows from
\cite[Theorem~2.1]{Lan2026} together with the accelerated-gradient/FISTA estimate for the regularized subproblems; see also \cite[Theorem~5.7]{Jiang2025} and \cite[Theorem~10.34]{Beck2017}. In the present notation, this implies $K_{\rm AR}(\Delta_k)=O(\Delta_k^{-1/2})$ inner iterations for computing a dual point satisfying $\norm{\nabla G_k(\lambda^k)}\leq\Delta_k$.

We next combine the above inner-solver bounds with
Theorem~\ref{thm:outer-complexity} by choosing the deterministic tolerance
floor $\varsigma_k$ according to the target accuracy.

\begin{theorem}[Overall inner complexity]\label{thm:inner-complexity}
Suppose Assumptions~\ref{ass:geometry}, \ref{ass:estimator} and \ref{ass:complexity-tol} hold. Let
$\kappa\geq0$ and let $\bar\alpha$ satisfy the stepsize condition in
Theorem~\ref{thm:outer-complexity}, and define $\tau$, $a_\eta$, $a_{\wh\eta}$, and
$C_{\rm comp}$ as therein. Let $C_0$ be as in Lemma~\ref{lem:B-bounds}, and set
\(C_{\rm tol}:=\max\{1,4C_{\rm comp}\ell_RC_0\}\) and  \[A_0:=C_{\rm comp}\left(\Phi(x^0)-\inf_{x\in\M}\Phi(x)\right) +c_e\left(2+\frac{C_{\rm comp}}{2\tau}\right)\E[\mathcal E_0].\]
Let $\eps>0$ be sufficiently small so that
$\eps^2/C_{\rm tol}\leq\bar\Delta$, and set
$K_\eps:=\max\{1,\lceil2A_0\eps^{-2}\rceil\}$. Choose $\Delta_k$ according to
Assumption~\ref{ass:complexity-tol}, with
$\varsigma_k:=\eps^2/C_{\rm tol}$ for $k=0,\ldots,K_\eps-1$ and
$\varsigma_k:=0$ for $k\geq K_\eps$. Then
$\min_{0\leq k<K_\eps}\E[\Theta_k^2]\leq\eps^2$. The total numbers of inner iterations required by Algorithm 1-FISTA, Algorithm 1-NFG, and Algorithm 1-AR are $O(\eps^{-4})$, $O\bigl(\eps^{-3}\log(1+\eps^{-2})\bigr)$, and $O(\eps^{-3})$, respectively.
\end{theorem}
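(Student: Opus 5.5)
The plan has two parts: an outer-accuracy estimate obtained from Theorem~\ref{thm:outer-complexity}(i), and an inner-work estimate obtained by summing the per-iteration solver bounds, where the deterministic floor $\varsigma_k$ plays the key role.

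\emph{Outer accuracy.} The chosen floor satisfies $\varsigma_k\le\bar\Delta$ because $\eps^2/C_{\rm tol}\le\bar\Delta$. It is also summable, with $\sum_k\varsigma_k=K_\eps\eps^2/C_{\rm tol}$. Hence Assumption~\ref{ass:complexity-tol} holds and Theorem~\ref{thm:outer-complexity}(i) applies. With $\widetilde C=\Phi(x^0)-\inf\Phi+2\ell_RC_0\sum_k\varsigma_k$, it gives
\[
\sum_{k=0}^{K_\eps-1}\E[\Theta_k^2]\le A_0+2C_{\rm comp}\ell_RC_0\frac{K_\eps\eps^2}{C_{\rm tol}}\le A_0+\frac{K_\eps\eps^2}{2},
\]
where the last step uses $C_{\rm tol}\ge4C_{\rm comp}\ell_RC_0$. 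Dividing by $K_\eps$ gives $\min_{k<K_\eps}\E[\Theta_k^2]\le A_0/K_\eps+\eps^2/2$. Since $K_\eps\ge2A_0\eps^{-2}$, we have $A_0/K_\eps\le\eps^2/2$, and the accuracy claim follows. The case $A_0=0$ is trivial.

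\emph{Inner work.} For $k<K_\eps$, Assumption~\ref{ass:complexity-tol} gives $\Delta_k\ge\min\{\bar\Delta,\varsigma_k\}=\eps^2/C_{\rm tol}$ pathwise, whatever the random history. Each inner solver count $K_{\rm FISTA}$, $K_{\rm NFG}$, $K_{\rm AR}$ is nonincreasing in the tolerance, and its constants are uniform in $k$. This uniformity holds because $L_D=\alpha\overline C_B^2$ and the bound $\norm{\lambda_k^*}\le 2(M_u+\ell_R)/\underline C_B$ do not depend on $k$, and the solvers are initialized at a fixed point. Therefore, at each $k<K_\eps$, the inner iterations number at most $O(C_{\rm tol}\eps^{-2})$ for FISTA, $O(\eps^{-1}\log(1+\eps^{-2}))$ for NFG, and $O(\eps^{-1})$ for AR.

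Multiplying these per-iteration bounds by $K_\eps=O(\eps^{-2})$ gives the totals $O(\eps^{-4})$, $O(\eps^{-3}\log(1+\eps^{-2}))$, and $O(\eps^{-3})$. The bounds hold pathwise, and hence also in expectation.

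The main subtlety is ensuring that the inner-solver constants are uniform in $k$. This requires the uniform dual-solution bound and the independence of $L_D$ from the iterate. For NFG and AR it also requires that the cited estimates from \cite{Jiang2025,Lan2026} depend only on $L_D$, on the initial distance to $\lambda_k^*$, and on the target $\Delta_k$. I would check this directly against the cited lemmas. The earlier discussion of the solvers already frames their bounds in exactly this form.
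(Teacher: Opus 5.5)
Your proposal is correct and is exactly the argument the paper's constants are built for (the paper states this theorem without writing out a separate proof). Applying Theorem~\ref{thm:outer-complexity}(i) with $\sum_k\varsigma_k=K_\eps\eps^2/C_{\rm tol}$, the choices $C_{\rm tol}\ge 4C_{\rm comp}\ell_RC_0$ and $K_\eps\ge 2A_0\eps^{-2}$ split $\eps^2$ into two halves, the floor gives $\Delta_k\ge\eps^2/C_{\rm tol}$ pathwise for $k<K_\eps$, and multiplying the $k$-uniform per-iteration solver bounds by $K_\eps=O(\eps^{-2})$ yields the three stated totals.
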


By Danskin's theorem \cite[Proposition~B.22]{Bertsekas2016}, we have
\[
\nabla G_k(\lambda)
=
-B_{x^k}
\left[
\operatorname{prox}_{\alpha R}
\left(
x^k-\alpha\bigl(m^k+B_{x^k}^{\top}\lambda\bigr)
\right)-x^k
\right].
\]
Since each inner iteration of the above first-order dual solvers uses
\(O(1)\) evaluations of \(\nabla G_k\), and each evaluation of
\(\nabla G_k\) requires one evaluation of
\(\operatorname{prox}_{\alpha R}\), the above inner-iteration bounds
also give the same-order bounds on the total number of proximal-operator
evaluations for \(R\).
\section{Verification for concrete variance-reduced estimators}
In this section, we verify Assumption~\ref{ass:estimator} for projection-based
variants of SVRG \cite{Reddi2016SVRG}, SARAH/SPIDER
\cite{Pham2020,Fang2018SPIDER}, SAGA \cite{Reddi2016Proximal}, and SAG
\cite{Schmidt2017}. We first describe the base estimator $u^k$ used in
Algorithm~\ref{alg:iRPMVR}. Let $b\in\{1,\ldots,n\}$ denote the mini-batch
size. For the epoch-based estimators, let $q_{\rm ep}\in\N$ denote the epoch
length. All mini-batches are sampled uniformly from $\{1,\ldots,n\}$.

\paragraph{\normalfont\bfseries SVRG estimator}
At the beginning of the $s$th epoch, set
$\wt x^s:=x^{sq_{\rm ep}}$ and compute $\grad F(\wt x^s)$. For
$k=sq_{\rm ep},\ldots,(s+1)q_{\rm ep}-1$, define
\[u^k:=P_{T_{x^k}\M}\grad F(\wt x^s) +\frac1b\sum_{j\in\mathcal B_k} \left(\grad f_j(x^k)-P_{T_{x^k}\M}\grad f_j(\wt x^s)\right).\]

\paragraph{\normalfont\bfseries SARAH/SPIDER estimator}
At the beginning of the $s$th epoch, set
$u^{sq_{\rm ep}}:=\grad F(x^{sq_{\rm ep}})$. For
$k=sq_{\rm ep}+1,\ldots,(s+1)q_{\rm ep}-1$, define recursively
\[u^k:=P_{T_{x^k}\M}u^{k-1} +\frac1b\sum_{j\in\mathcal B_k} \left(\grad f_j(x^k)-P_{T_{x^k}\M}\grad f_j(x^{k-1})\right).\]
This estimator uses only gradient differences between consecutive iterates,
and the full gradient is recomputed every $q_{\rm ep}$ iterations.

\paragraph{\normalfont\bfseries SAGA estimator}
For each $j=1,\ldots,n$, let $z_j^k\in\M$ be the stored point and set
$y_j^k:=\nabla f_j(z_j^k)\in\R^d$ and
$\bar y^k:=\frac{1}{n}\sum_{j=1}^ny_j^k$. The SAGA estimator is defined by
\[v^k:=\frac1b\sum_{j\in\mathcal B_k} \left(\nabla f_j(x^k)-y_j^k\right)+\bar y^k,\quad  u^k:=P_{T_{x^k}\M}v^k.\]
After $u^k$ is formed, the table entries indexed by $\mathcal B_k$ are
refreshed according to
$z_j^{k+1}:=x^k$, $y_j^{k+1}:=\nabla f_j(x^k)$ and $j\in\mathcal B_k$, while
the remaining entries are left unchanged. The table is initialized by
$z_j^0=x^0$ and $y_j^0=\nabla f_j(x^0)$ for $j=1,\ldots,n$.

\paragraph{\normalfont\bfseries SAG estimator}
The SAG estimator uses the same ambient gradient table as SAGA, but replaces
the unbiased SAGA correction by an averaged correction:
\[v^k:=\bar y^k+\frac1n\sum_{j\in\mathcal B_k} \left(\nabla f_j(x^k)-y_j^k\right), \quad u^k:=P_{T_{x^k}\M}v^k.\]
The table is refreshed in the same way as in the SAGA estimator. Unlike SAGA,
this estimator is generally biased, but the bias is controlled by the same
memory error sequence.

We first isolate the effect of the momentum projection, so that it suffices to
verify the base estimator $u^k$ for each variance-reduction mechanism.

\begin{lemma}[Momentum lifting]\label{lem:momentum}
Suppose Assumption~\ref{ass:geometry} holds. Let
$\delta^k:=u^k-\grad F(x^k)$ and
$M^k:=\bar m^k-\grad F(x^k)$. Assume that there exist constants
$c_u>0$, $\mu_u>0$, $\nu_u>0$, and a nonnegative adapted sequence
$\{V_k:k\in\N\}$ such that
\begin{equation}\label{eq:base-estimator}
 \E_k[\norm{\delta^k}^2]+\mu_uV_k
 \leq c_u(V_k-\E_k[V_{k+1}])+\nu_u\E_k[\norm{\eta^k}^2].
\end{equation}
Then Assumption~\ref{ass:estimator} holds with
$\mathcal E_k:=V_k+\lambda\norm{M^k}^2$ for some $\lambda>0$. Moreover,
\begin{equation}\label{eq:momentum-orders}
 c_e=C_uc_u,\qquad
 \mu=C_u'\min\{\mu_u,c_u\},\qquad
 \nu=C_u''(\nu_u+1),
\end{equation}
where $C_u,C_u',C_u''>0$ may depend on $\bar\chi$ and $L_T$, but not on
$n,b,q_{\rm ep},K$, or $\eps$.
\end{lemma}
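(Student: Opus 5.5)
The plan is to bound $e^k$ by a convex combination of the momentum error $M^k$ and the base error $\delta^k$. Next we derive a contractive recursion for $\norm{M^{k}}^2$. Finally we add this recursion, with a weight $\lambda$, to \eqref{eq:base-estimator}, and choose $\lambda$ so that every coefficient has the required sign.

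\emph{Step 1 (error splitting).} Since $m^k=\chi_k\bar m^k+(1-\chi_k)\wh u^k$, we can write $e^k=\chi_kM^k+(1-\chi_k)(\wh u^k-\grad F(x^k))$. The clipping remark after Algorithm~\ref{alg:iRPMVR} gives $\norm{\wh u^k-\grad F(x^k)}\le\norm{\delta^k}$. Convexity of $\norm{\cdot}^2$ and $\chi_k\le\bar\chi$ then yield
\[\norm{e^k}^2\le\bar\chi\norm{M^k}^2+\norm{\delta^k}^2 .\]

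\emph{Step 2 (momentum recursion).} By step (f) of the algorithm, we decompose
\[M^{k+1}=P_{T_{x^{k+1}}\M}e^k+\bigl(P_{T_{x^{k+1}}\M}\grad F(x^k)-\grad F(x^{k+1})\bigr).\]
The first term has norm at most $\norm{e^k}$ because the projection is nonexpansive. Lemma~\ref{lem:step-size} guarantees $\gamma_k\norm{\eta^k}\le\delta_T$, so Lemma~\ref{lem:projection}(i) bounds the second term by $L_T\gamma_k\norm{\eta^k}\le L_T\norm{\eta^k}$. Fix $\beta>0$ such that $\chi':=(1+\beta)\bar\chi<1$; for instance, take $\chi'=(1+\bar\chi)/2$ when $\bar\chi>0$. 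Young's inequality and Step 1 give
\[\norm{M^{k+1}}^2\le\chi'\norm{M^k}^2+(1+\beta)\norm{\delta^k}^2+(1+\beta^{-1})L_T^2\norm{\eta^k}^2 .\]
Since $M^k$ is $\calF_k$-measurable, taking $\E_k$ gives a lower bound on $\norm{M^k}^2-\E_k[\norm{M^{k+1}}^2]$ of the form $(1-\chi')\norm{M^k}^2$ minus $\delta$- and $\eta$-terms.

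\emph{Step 3 (assembling $\mathcal E_k=V_k+\lambda\norm{M^k}^2$).} Write $\vartheta:=c_e\lambda$ and treat $\vartheta$ as a free constant depending only on $\bar\chi$. The $\lambda$-weighted version of Step 2 produces a negative contribution $-\vartheta(1+\beta)\E_k[\norm{\delta^k}^2]$ on the right-hand side. Moving it to the left, the total coefficient of $\E_k[\norm{\delta^k}^2]$ becomes $A:=1+\vartheta(1+\beta)$. We then multiply \eqref{eq:base-estimator} by $A$ and set $c_e:=Ac_u$, which absorbs $c_u(V_k-\E_k[V_{k+1}])$ exactly. This step needs care, because $V_k-\E_k[V_{k+1}]$ may be negative, so it must be matched with equality rather than bounded.

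The remaining coefficients are handled as follows.
\begin{itemize}
\item The term $\mu V_k$ is covered by $A\mu_uV_k$ provided $\mu\le A\mu_u$.
\item On $\norm{M^k}^2$, the left-hand side carries $\bar\chi+\mu\lambda$ and the right-hand side supplies $\vartheta(1-\chi')$. We choose $\vartheta:=1+2\bar\chi/(1-\chi')$, so that $\vartheta(1-\chi')-\bar\chi>0$. It then suffices that $\mu\le c_e(\vartheta(1-\chi')-\bar\chi)/\vartheta$, which is a constant multiple of $c_u$.
\item The $\eta$-terms give $\nu:=A\nu_u+\vartheta(1+\beta^{-1})L_T^2$.
\end{itemize}
Taking $\mu$ as the minimum of the two admissible values yields \eqref{eq:momentum-orders}. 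The constants depend only on $\bar\chi$ and $L_T$.

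The main obstacle is the coupling in Step 3: the momentum recursion feeds $\norm{\delta^k}^2$ back with a positive sign, so the weight $\lambda$ must be fixed relative to $c_e$ before $c_e$ is defined. Parametrizing by $\vartheta=c_e\lambda$ breaks this circularity. We must also verify that $\mathcal E_k$ is nonnegative and $\calF_k$-adapted, which holds because $M^k$ depends only on $x^k$ and $\bar m^k$. Finally, $\E[\mathcal E_0]<\infty$ holds because $M^0=0$.
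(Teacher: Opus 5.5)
Your proposal is correct and follows the paper's proof essentially step for step. It uses the same convexity split $\norm{e^k}^2\le\bar\chi\norm{M^k}^2+\norm{\delta^k}^2$, the same Young-type contraction for $\norm{M^{k+1}}^2$ with factor $(1+\beta)\bar\chi<1$ obtained from Lemma~\ref{lem:projection}, and the same device of multiplying \eqref{eq:base-estimator} by a fixed constant and defining $c_e$ as that constant times $c_u$. The only difference is bookkeeping: you fix $\vartheta=c_e\lambda$ and use the multiplier $A=1+\vartheta(1+\beta)$, whereas the paper fixes $\lambda=(2(1+\beta)c_u)^{-1}$ and $c_e=Cc_u$ with $C\ge 2$, using half of $C\,\E_k[\norm{\delta^k}^2]$ to absorb the feedback term.
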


\begin{proof}
Set $\wh\delta^k:=\wh u^k-\grad F(x^k)$. By the nonexpansiveness of the clipping
projection, $\norm{\wh\delta^k}\leq\norm{\delta^k}$, and hence
$\norm{e^k}^2\leq\bar\chi\norm{M^k}^2+\norm{\delta^k}^2.$
Choose $\beta>0$ such that $\rho_M:=(1+\beta)\bar\chi<1$. By the momentum
update, Lemma~\ref{lem:projection}, and Young's inequality,
\[\E_k[\norm{M^{k+1}}^2] \leq\rho_M\norm{M^k}^2+A_M\E_k[\norm{\delta^k}^2] +B_M\E_k[\norm{\eta^k}^2],\]
where $A_M:=1+\beta$ and $B_M:=(1+\beta^{-1})L_T^2$.
Set
$\lambda:=(2A_Mc_u)^{-1}$,
$c_e:=Cc_u$, and $\mu:=c_\mu\min\{\mu_u,c_u\}$, where
$c_\mu:=(1-\rho_M)/2$ and
$C\geq\max\{2,1/2+2A_M\bar\chi/(1-\rho_M)\}.$
Combining the preceding estimate with \eqref{eq:base-estimator}, we get
\[\E_k[\norm{e^k}^2]+\mu\mathcal E_k \leq
c_e(\mathcal E_k-\E_k[\mathcal E_{k+1}])
+\nu\E_k[\norm{\eta^k}^2],\] with $\nu:=C(\nu_u+B_M/(2A_M))$. This proves
\eqref{eq:momentum-orders}.
\end{proof}

We now summarize the estimator parameters for the four variance-reduction
mechanisms introduced above.

\begin{proposition}[Estimator parameters for momentum VR schemes]
\label{prop:VR-parameters}
Suppose Assumption~\ref{ass:geometry} holds and
$\bar\alpha\leq C_\alpha$, with $C_\alpha$ from
Lemma~\ref{lem:step-size}. Assume that the variance-reduction state is
initialized at $x^0$ so that $u^0=\grad F(x^0)$, and that
$\bar m^0=\grad F(x^0)$. Then the SVRG, SARAH/SPIDER, SAGA, and SAG estimators
defined above, combined with the momentum update in Algorithm~\ref{alg:iRPMVR},
satisfy Assumption~\ref{ass:estimator} with the following parameter orders:
\[
\begin{array}{c|c|c|c|c}
\text{Estimator} & c_e & \mu & \nu & \E[\mathcal E_0]\\
\hline
\text{SVRG} & O(1) & O(q_{\rm ep}^{-1}) & O(q_{\rm ep}^2/b+1) & 0\\
\text{SARAH/SPIDER} & O(1) & O(q_{\rm ep}^{-1}) & O(q_{\rm ep}/b+1) & 0\\
\text{SAGA} & O(n/b^2) & O(b^{-1}) & O(n^2/b^3+1) & 0\\
\text{SAG} & O(n/b) & O(b^{-1}) & O(n^2/b^2+1) & 0
\end{array}
\]
\end{proposition}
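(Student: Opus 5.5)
By Lemma~\ref{lem:momentum}, it suffices to verify the base-estimator inequality \eqref{eq:base-estimator} for each mechanism. We need explicit orders of $c_u,\mu_u,\nu_u$, and an initialization giving $V_0=0$. Once these are in hand, \eqref{eq:momentum-orders} produces the table: $c_e=O(c_u)$, $\mu=O(\min\{\mu_u,c_u\})$, and $\nu=O(\nu_u+1)$. Also $\E[\mathcal E_0]=V_0+\lambda\norm{M^0}^2=0$, because $u^0=\grad F(x^0)$ and $\bar m^0=\grad F(x^0)$ give $\delta^0=M^0=0$. For every mechanism the common tool is the following. First, conditional unbiasedness (or a controlled bias) with respect to $\mathcal B_k$ gives a variance bound $\E_k\norm{\cdot}^2\le b^{-1}\frac1n\sum_j\norm{a_j}^2$ for a uniform mini-batch. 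Second, Lemma~\ref{lem:projection} turns each gradient difference into a sum of step lengths. Because the iterates move by $\Retr_{x^k}(\gamma_k\eta^k)$ with $\norm{\eta^k}\le\delta_T$ (Lemma~\ref{lem:step-size}), we get $\norm{\grad f_j(x^k)-P_{T_{x^k}\M}\grad f_j(y)}\le L_T\sum\norm{\eta^i}$ along the path from $y$ to $x^k$.

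\textbf{SARAH/SPIDER.} Take $V_k:=$ a weighted sum of past squared steps within the current epoch, for example $V_k:=\frac{c}{b}\sum_{i=sq_{\rm ep}}^{k-1}(\text{weight}_i)\norm{\eta^i}^2$. The recursion $\delta^k=P_{T_{x^k}\M}\delta^{k-1}+(\text{zero-mean term})+(\text{projection-bias term})$ shows that
\[
\E_k\norm{\delta^k}^2\le\norm{\delta^{k-1}}^2+\frac{L_T^2}{b}\norm{\eta^{k-1}}^2.
\]
The projection-bias term comes from $P_{T_{x^k}}\grad F(x^{k-1})$ versus $P_{T_{x^k}}u^{k-1}$. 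Its component along $N_{x^k}\M$ is controlled because the projection is nonexpansive and $\grad F(x^k)-P\grad F(x^{k-1})=O(\norm{\eta^{k-1}})$; a Young split sends the extra $O(\norm{\eta^{k-1}}^2)$ into $\nu_u$. Telescoping over the epoch gives $\norm{\delta^k}^2\lesssim\frac{1}{b}\sum_{\text{epoch}}\norm{\eta^i}^2$. We then choose a Lyapunov term with linearly decreasing weights $(q_{\rm ep}-(k-sq_{\rm ep}))$. This gives $c_u=O(1)$, a dissipation $\mu_u=O(q_{\rm ep}^{-1})$ from the resets to zero at epoch starts, and $\nu_u=O(q_{\rm ep}/b)$.

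\textbf{SVRG.} The analysis is identical, except that the error is bounded by $\frac{L_T^2}{b}\bigl(\sum_{i=sq_{\rm ep}}^{k-1}\norm{\eta^i}\bigr)^2\le\frac{L_T^2q_{\rm ep}}{b}\sum\norm{\eta^i}^2$. The same weighted potential then yields $\nu_u=O(q_{\rm ep}^2/b)$.

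\textbf{SAGA and SAG.} Take $V_k:=\frac{c}{n}\sum_j\norm{P_{T_{x^k}\M}\nabla f_j(x^k)-P_{T_{x^k}\M}y_j^k}^2$, or more simply a distance-type table potential $\frac1n\sum_j\norm{x^k-z_j^k}^2$. For SAGA, unbiasedness gives $\E_k\norm{\delta^k}^2\le\frac{\ell_f^2}{b}\frac1n\sum_j\norm{x^k-z_j^k}^2$. Each entry is refreshed with probability about $b/n$, so
\[
\E_k V_{k+1}\le(1-\tfrac{b}{2n})V_k+O(\tfrac{n}{b})\norm{\eta^k}^2 .
\]
Scaling $V_k$ by $n/b^2$ to match the variance coefficient $1/b$ gives $c_u=O(n/b^2)$, $\mu_u=O(1/b)$, and $\nu_u=O(n^2/b^3)$. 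For SAG the bias $(1-b/n)(\bar y^k-\nabla F(x^k))$ has no $1/b$ variance reduction, so the coefficients scale as $n/b$, giving $c_u=O(n/b)$ and $\nu_u=O(n^2/b^2)$.

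The main obstacle is the bookkeeping needed to get the coefficient triple $(c_u,\mu_u,\nu_u)$ with exactly the stated orders. For the epoch-based methods this means designing a potential that decreases and is reset at epoch boundaries while still giving a strictly positive $\mu_u$. For SAGA and SAG it means balancing the refresh probability against the drift $\norm{x^{k+1}-x^k}\le C_R\norm{\eta^k}$ with the right scalings in $n$ and $b$. The manifold projections add only lower-order $O(\norm{\eta}^2)$ terms through Lemma~\ref{lem:projection}.
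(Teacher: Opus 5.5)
Your reduction to \eqref{eq:base-estimator} via Lemma~\ref{lem:momentum}, the argument for $\E[\mathcal E_0]=0$, and the SVRG, SAGA and SAG cases are fine and essentially follow the paper. For SVRG, the bound $\E_k[\norm{\delta^k}^2]\le (L_T^2q_{\rm ep}/b)\sum_{i=sq_{\rm ep}}^{k-1}\norm{\eta^i}^2$ has an $\calF_k$-measurable right-hand side, so a step-length potential with weights $q_{\rm ep}-r_k$ does work.

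The SARAH/SPIDER case has a genuine gap. The inequality $\E_k[\norm{\delta^k}^2]\le\norm{\delta^{k-1}}^2+(L_T^2/b)\norm{\eta^{k-1}}^2$ holds only in conditional expectation. Telescoping it therefore gives at best $\E[\norm{\delta^k}^2\mid\calF_{sq_{\rm ep}}]\lesssim b^{-1}\E[\sum_i\norm{\eta^i}^2\mid\calF_{sq_{\rm ep}}]$, not the pathwise bound $\norm{\delta^k}^2\lesssim b^{-1}\sum_i\norm{\eta^i}^2$ that you assert. The factor $1/b$ is a variance effect. For an unfavorable batch, the mini-batch correction has size of order $L_T\norm{\eta^{k-1}}$, so pathwise you only get $\norm{\delta^{k-1}}^2\le 4L_T^2q_{\rm ep}\sum_i\norm{\eta^i}^2$.

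Assumption~\ref{ass:estimator}, however, must hold conditionally on $\calF_k$ at every $k$. Since the potential at $k+1$ is nonnegative, it forces $\norm{P_{T_{x^k}\M}\delta^{k-1}}^2\le\E_k[\norm{\delta^k}^2]\le c_uV_k+\nu_u\E_k[\norm{\eta^k}^2]$. In general, a potential built only from past step lengths can dominate the random quantity $\norm{\delta^{k-1}}^2$ only through the pathwise estimate. That yields $\nu_u=O(q_{\rm ep}^2)$ instead of $O(q_{\rm ep}/b)$. With $q_{\rm ep}=b=\lceil\sqrt n\rceil$ this means $\nu=O(n)$, which destroys the $n$-independent stepsize and the $O(n+\sqrt n\,\eps^{-2})$ IFO bound.

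A side remark: there is no separate projection-bias term. By linearity of $P_{T_{x^k}\M}$, $\delta^k=P_{T_{x^k}\M}\delta^{k-1}+\xi^k$ holds exactly, with $\xi^k$ the conditionally mean-zero mini-batch correction.

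The missing idea is to carry the current error inside the potential. This is legitimate because $\delta^{k-1}$ is $\calF_k$-measurable. With $r_k:=k-sq_{\rm ep}$, set $H_k:=\norm{\delta^{k-1}}^2+(L_T^2/b)\gamma_{k-1}^2\norm{\eta^{k-1}}^2$ if $r_k\ge1$, $H_k:=0$ if $r_k=0$, and $\mathcal V_k:=(q_{\rm ep}-r_k+1)H_k$. Then:
\begin{enumerate}
\item[(a)] $\E_k[\norm{\delta^k}^2]\le H_k$.
\item[(b)] $\E_k[H_{k+1}]\le H_k+(L_T^2/b)\E_k[\norm{\eta^k}^2]$ within an epoch, and $\mathcal V_{k+1}=0$ at a reset. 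Hence $\E_k[\mathcal V_{k+1}]\le(q_{\rm ep}-r_k)\bigl(H_k+(L_T^2/b)\E_k[\norm{\eta^k}^2]\bigr)$.
\item[(c)] Consequently $\mathcal V_k-\E_k[\mathcal V_{k+1}]\ge H_k-(L_T^2q_{\rm ep}/b)\E_k[\norm{\eta^k}^2]$.
\end{enumerate}
Combined with $q_{\rm ep}^{-1}\mathcal V_k\le H_k$, this gives \eqref{eq:base-estimator} with $c_u=2$, $\mu_u=q_{\rm ep}^{-1}$ and $\nu_u=2L_T^2q_{\rm ep}/b$. These are exactly the orders the table requires.
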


\begin{proof}
By Lemma~\ref{lem:momentum}, it suffices to verify
\eqref{eq:base-estimator} for each base estimator.

For SVRG, when $n=1$, the variance term is zero and the estimate is
trivial.
Thus we only need to consider $n\geq2$ in the following variance estimate.
Let $s=s(k):=\lfloor k/q_{\rm ep}\rfloor$, so that
$\wt x^s=x^{sq_{\rm ep}}$, and write $P_k:=P_{T_{x^k}\M}$. Define
\[d_j^k:=\grad f_j(x^k)-P_k\grad f_j(\wt x^s), \quad
\bar d^k:=\frac1n\sum_{j=1}^nd_j^k.\]
Then $\delta^k=b^{-1}\sum_{j\in\mathcal B_k}(d_j^k-\bar d^k)$.
By the sampling-without-replacement variance identity
\cite[Lemma~1]{Mishchenko2020}, applied conditionally on $\calF_k$, we have
\[\E_k[\delta^k]=0, \quad 
\E_k[\norm{\delta^k}^2]
=\frac{n-b}{b(n-1)}\frac1n\sum_{j=1}^n\norm{d_j^k-\bar d^k}^2.\] For the sampling-with-replacement case, we know \[\E_k[\delta^k]=0, \quad
\E_k[\norm{\delta^k}^2]=\frac1{nb}\sum_{j=1}^n\norm{d_j^k-\bar d^k}^2.\]
These imply that
$\E_k[\norm{\delta^k}^2]\leq\frac1{bn}\sum_{j=1}^n\norm{d_j^k-\bar d^k}^2$.
By Lemma~\ref{lem:projection}, $\norm{d_j^k}\leq L_T\sum_{\ell=sq_{\rm ep}}^{k-1}
\gamma_\ell\norm{\eta^\ell}$ for every $j=1,\ldots,n$.
Therefore, by Cauchy's inequality and $k-sq_{\rm ep}\leq q_{\rm ep}$,
\begin{equation}\label{eq:svrg-variance}
 \E_k[\norm{\delta^k}^2]
 \leq\frac{L_T^2}{b}
 \left(\sum_{\ell=sq_{\rm ep}}^{k-1}
 \gamma_\ell\norm{\eta^\ell}\right)^2
 \leq\frac{L_T^2q_{\rm ep}}b
 \sum_{\ell=sq_{\rm ep}}^{k-1}\gamma_\ell^2\norm{\eta^\ell}^2.
\end{equation}
Set
\[H_k:=\sum_{\ell=sq_{\rm ep}}^{k-1}\gamma_\ell^2\norm{\eta^\ell}^2, \quad
a_{\rm svrg}:=L_T^2q_{\rm ep}/b, \quad
\mathcal V_k:=a_{\rm svrg}(q_{\rm ep}-k+sq_{\rm ep})H_k.\] Then $\mathcal V_k$ is
nonnegative and $\calF_k$-measurable. Moreover,
\eqref{eq:svrg-variance} yields
$\E_k[\norm{\delta^k}^2]\leq a_{\rm svrg}H_k$. We next estimate the one-step
decrease of $\mathcal V_k$. If
$k-sq_{\rm ep}\leq q_{\rm ep}-2$, then
\[\E_k[\mathcal V_{k+1}]=a_{\rm svrg}(q_{\rm ep}-k-1+sq_{\rm ep})
(H_k+\gamma_k^2\E_k[\norm{\eta^k}^2]).\] If
$k-sq_{\rm ep}=q_{\rm ep}-1$, then $k+1$ is the beginning of the next SVRG
epoch, and hence $\mathcal V_{k+1}=0$. Consequently, in both cases,
\[\mathcal V_k-\E_k[\mathcal V_{k+1}] \geq a_{\rm svrg}H_k-a_{\rm svrg}(q_{\rm ep}-1) \E_k[\norm{\eta^k}^2],\]
where we used $\gamma_k\leq1$. Since
$0\leq k-sq_{\rm ep}\leq q_{\rm ep}$, it also holds that
$q_{\rm ep}^{-1}\mathcal V_k\leq a_{\rm svrg}H_k$. Combining the preceding estimates
yields \[\E_k[\norm{\delta^k}^2]+q_{\rm ep}^{-1}\mathcal V_k\leq2a_{\rm svrg}H_k
\leq2(\mathcal V_k-\E_k[\mathcal V_{k+1}])
+2a_{\rm svrg}(q_{\rm ep}-1)\E_k[\norm{\eta^k}^2].\] Thus \eqref{eq:base-estimator} holds for the SVRG estimator with
$c_u=2$, $\mu_u=q_{\rm ep}^{-1}$ and
$\nu_u=\frac{2L_T^2q_{\rm ep}(q_{\rm ep}-1)}{b}$.

For SARAH/SPIDER, let $s=s(k):=\lfloor k/q_{\rm ep}\rfloor$ and
$r_k:=k-sq_{\rm ep}$. If $r_k=0$, then $u^k=\grad F(x^k)$, and hence
$\delta^k=0$. Suppose $r_k\geq1$ and write $P_k:=P_{T_{x^k}\M}$. Define
\[d_j^k:=\grad f_j(x^k)-P_k\grad f_j(x^{k-1}), \quad
\bar d^k:=n^{-1}\sum_{j=1}^nd_j^k.\] Then
$\delta^k=P_k\delta^{k-1}+\frac1b\sum_{j\in\mathcal B_k}(d_j^k-\bar d^k).$
The second term has conditional mean zero. Hence, by the nonexpansiveness of
$P_k$, the sampling variance estimate used above, and
Lemma~\ref{lem:projection},
\[\E_k[\norm{\delta^k}^2]\leq\norm{\delta^{k-1}}^2
+\frac{L_T^2}{b}\gamma_{k-1}^2\norm{\eta^{k-1}}^2.\]
Set $a_{\rm sp}:=L_T^2/b$. If $r_k=0$, set $H_k:=0$, while if $r_k\geq1$, set
$H_k:=\norm{\delta^{k-1}}^2+a_{\rm sp}\gamma_{k-1}^2\norm{\eta^{k-1}}^2$, and
$\mathcal V_k:=(q_{\rm ep}-r_k+1)H_k$. Then $\mathcal V_k$ is nonnegative and $\calF_k$-measurable. Arguing as in the
preceding proof for SVRG,
\[\E_k[\mathcal V_{k+1}]\leq(q_{\rm ep}-r_k)
(H_k+a_{\rm sp}\E_k[\norm{\eta^k}^2]).\]
Thus, \(\mathcal V_k-\E_k[\mathcal V_{k+1}]
\geq H_k-a_{\rm sp}q_{\rm ep}\E_k[\norm{\eta^k}^2].\)
Therefore, applying
$\E_k[\norm{\delta^k}^2]+q_{\rm ep}^{-1}\mathcal V_k\leq2H_k$,
\eqref{eq:base-estimator} holds for the SARAH/SPIDER estimator with
$c_u=2$, $\mu_u=q_{\rm ep}^{-1}$, and $\nu_u=2L_T^2q_{\rm ep}/b$.

For SAGA, define the ambient memory discrepancy by
$g_j^k:=\nabla f_j(x^k)-y_j^k$ and
$H_k:=\frac1n\sum_{j=1}^n\norm{g_j^k}^2$.
Since
$\bar g^k:=\frac1n\sum_{j=1}^n g_j^k=\nabla F(x^k)-\bar y^k$,
the definition of the SAGA estimator implies
$\delta^k=P_{T_{x^k}\M}\left[\frac1b\sum_{j\in\mathcal B_k}(g_j^k-\bar g^k)\right].$
By the nonexpansiveness of $P_{T_{x^k}\M}$ and by the same sampling
variance estimate used in the SVRG case,
\[\E_k[\norm{\delta^k}^2]\leq\frac1{bn}\sum_{j=1}^n\norm{g_j^k-\bar g^k}^2
\leq\frac1b H_k.\]
We next estimate the table-refresh recursion. Let $I_j^k$ be the indicator of
the event that the index $j$ is refreshed at iteration $k$, and let
$p_b:=\mathbb P(I_j^k=1)$.
For the without-replacement case, $p_b=b/n$; for the with-replacement
case, \[p_b=1-(1-1/n)^b\geq 1-e^{-b/n}\geq(1-e^{-1})b/n,\] where the last
inequality utilizes the concavity of $1-e^{-x}$ on $[0,1]$. Thus, in both cases,
$p_b\geq c_1b/n$ with $c_1=1-e^{-1}$. Set
$s_j^k:=\nabla f_j(x^{k+1})-\nabla f_j(x^k)$.
By the table update rule, if $j$ is refreshed, then
$y_j^{k+1}=\nabla f_j(x^k)$, while otherwise $y_j^{k+1}=y_j^k$. Hence, pathwise,
$g_j^{k+1}=\nabla f_j(x^{k+1})-y_j^{k+1}=s_j^k+(1-I_j^k)g_j^k$.
Applying Young's inequality with parameter $p_b/2$, we obtain
\[\norm{g_j^{k+1}}^2\leq\left(1+\frac{p_b}{2}\right)(1-I_j^k)\norm{g_j^k}^2
+\left(1+\frac{2}{p_b}\right)\norm{s_j^k}^2.\]
Taking conditional expectation and using $\E_k[1-I_j^k]=1-p_b$, 
\[\left(1+\frac{p_b}{2}\right)(1-p_b)
=1-\frac{p_b}{2}-\frac{p_b^2}{2}\leq1-\frac{p_b}{2},\]
and $\norm{s_j^k}\leq\ell_f C_R\norm{\eta^k}$ which is obtained by the
Lipschitz continuity of the ambient gradients and Lemma~\ref{lem:retraction},
we obtain
\[\E_k[H_{k+1}]\leq\left(1-\frac{p_b}{2}\right)H_k
+\frac{3\ell_f^2C_R^2}{p_b}\E_k[\norm{\eta^k}^2],\] where we used $p_b\leq1$.
Hence, since $p_b\geq c_1b/n$,
\begin{equation}\label{eq:table-recursion}
 H_k-\E_k[H_{k+1}]
 \geq c_2\frac bnH_k-C_{\rm tab}\frac nb\E_k[\norm{\eta^k}^2],
\end{equation}
where $c_2:=c_1/2$ and $C_{\rm tab}:=3\ell_f^2C_R^2/c_1$.
Set $\mathcal V_k:=H_k$. Therefore, applying
$\E_k[\norm{\delta^k}^2]+b^{-1}\mathcal V_k\leq2/bH_k$,
\eqref{eq:base-estimator} holds with
$c_u=2n/(c_2b^2)$, $\mu_u=b^{-1}$, and
$\nu_u=2C_{\rm tab}n^2/(c_2b^3)$.

For SAG, use the same notations $g_j^k,H_k,\bar g^k$ as in the SAGA case. By
the definition of the SAG estimator,
$\delta^k=P_{T_{x^k}\M}\left(\frac1n\sum_{j\in\mathcal B_k}g_j^k-\bar g^k\right)$.
The SAG correction is biased, so we do not use the centered sampling-variance
identity used for SAGA. Instead, by the nonexpansiveness of
$P_{T_{x^k}\M}$, Cauchy's inequality, and $\norm{\bar g^k}^2\leq H_k$, we have
\[\E_k[\norm{\delta^k}^2]\leq2\E_k\left[\norm{\frac1n\sum_{j\in\mathcal B_k}g_j^k}^2\right]
+2\norm{\bar g^k}^2\leq4H_k,\]
where the last inequality is valid for both with- and without-replacement
sampling, because
$\E_k\left[\norm{\frac1n\sum_{j\in\mathcal B_k}g_j^k}^2\right]
\leq b^2/{n^2}H_k$.
The table-refresh recursion \eqref{eq:table-recursion} is exactly the same as in
the SAGA case. Arguing as in the proof for SAGA and setting
$\mathcal V_k:=H_k$, we obtain that \eqref{eq:base-estimator} holds with
$c_u=5n/(c_2b)$, $\mu_u=b^{-1}$, and
$\nu_u=5C_{\rm tab}n^2/(c_2b^2)$,
where $c_2$ and $C_{\rm tab}$ are the same constants as in the SAGA case.

In all four cases, $q_{\rm ep}\geq1$ and $1\leq b\leq n$ imply
$\min\{\mu_u,c_u\}=\mu_u$. Hence Lemma~\ref{lem:momentum} yields the parameter
orders stated in the table.
Finally, the stated initialization $u^0=\bar m^0=\grad F(x^0)$ implies that
the corresponding base memory satisfies $\mathcal V_0=0$, while
$M^0=\bar m^0-\grad F(x^0)=0$. Therefore
$\mathcal E_0=\mathcal V_0+\lambda\norm{M^0}^2=0$. This completes the proof.
\end{proof}

Thus, all four variance-reduction mechanisms satisfy Assumption~2, so the
convergence and complexity results developed above apply with the
corresponding parameter orders.
\section{Conclusion}

We developed a unified analysis of inexact stochastic Riemannian proximal
optimization for finite-sum nonsmooth composite problems over compact embedded
submanifolds. The analysis is organized around a conditional
error-dissipation condition for the gradient-surrogate errors and a predictable
inexactness rule for the tangent-space proximal subproblems. This structure
decouples the main convergence and complexity arguments from the particular
variance-reduction mechanism and inner solver, and covers projection-based
variants of SVRG, SARAH/SPIDER, SAGA, and SAG. Together with a computable
Fenchel-dual residual criterion, it yields subsequential stationarity,
\(O(\epsilon^{-2})\) outer-iteration and retraction complexity, as well as
overall oracle-complexity bounds that account explicitly for the inexact inner
solves.

We also established an abstract KL principle for conditional expected descent
relations with memory and summable tail perturbations. The analysis relies only
on the ordinary pointwise KL property and avoids the expected-KL implication
whose validity can fail, as demonstrated by our counterexample. Applied to
full-step iRPMVR, the principle yields almost-sure finite length and
whole-sequence convergence, together with deterministic KL rates. Possible extensions include expectation-model and online stochastic settings,
as well as replacing global compactness by suitable boundedness and local
uniformity conditions and relaxing the deterministic-limit-value requirement
in the sequential convergence analysis.
\bibliographystyle{siamplain}
\bibliography{references}
\end{document}